\newtheorem{theo}{Theorem}[section]
\newtheorem{lemma}[theo]{Lemma}
\newtheorem{assume}[theo]{Assumption}
\newtheorem{propo}[theo]{Proposition}
\newtheorem{coro}[theo]{Corollary}
\newtheorem*{theo:main}{Theorem~\ref{char-theo-1}}
\theoremstyle{definition}
\newtheorem{defi}[theo]{Definition}
\newtheorem{nota}[theo]{Notation}
\newtheorem{rem}[theo]{Remark}
\newtheorem{exam}[theo]{Example}
\newtheorem{exams}[theo]{Examples}
\newcommand\op{\operatorname{op}}
\newcommand\id{\operatorname{id}}
\newcommand\Set{\operatorname{\bf Set}}
\newcommand\Met{\operatorname{\bf Met}}
\newcommand\CMet{\operatorname{\bf CMet}}
\newcommand\Ban{\operatorname{\bf Ban}}
\newcommand\CPO{\operatorname{\bf CPO}}
\newcommand\MGra{\operatorname{\bf MGra}}
\newcommand\colim{\operatorname{colim}}
\newcommand\eps{\varepsilon}
\newcommand\ca{\mathcal {A}}
\newcommand\cc{\mathcal {C}}
\newcommand\cd{\mathcal {D}}
\newcommand\cu{\mathcal {U}}
\newcommand\cg{\mathcal {G}}
\newcommand\ch{\mathcal {H}}
\newcommand\ce{\mathcal {E}}
\newcommand\ck{\mathcal {K}}
\newcommand\cl{\mathcal {L}}
\newcommand\cm{\mathcal {M}}
\newcommand\cp{\mathcal {P}}
\newcommand\cq{\mathcal {Q}}
\newcommand\cw{\mathcal {W}}
\newcommand\cv{\mathcal {V}}
\newcommand{\tx}{\textnormal}
\newcommand{\bo}{\mathbf}
\date{December 9, 2024}
\begin{document}
\title[Notions of enriched purity]
{Notions of enriched purity}

\author[J. Rosick\'{y} and G. Tendas]
{J. Rosick\'{y} and G. Tendas}
\thanks{Both authors acknowledge the support of the Grant Agency of the Czech Republic under the grant 22-02964S. The second author also acknowledges the support of the EPSRC postdoctoral fellowship EP/X027139/1.} 
\address{
\newline J. Rosick\'{y}\newline
Department of Mathematics and Statistics\newline
Masaryk University, Faculty of Sciences\newline
Kotl\'{a}\v{r}sk\'{a} 2, 611 37 Brno, Czech Republic\newline
\textnormal{rosicky@math.muni.cz}\newline
\newline G. Tendas\newline
Department of Mathematics and Statistics\newline
Masaryk University, Faculty of Sciences\newline
Kotl\'{a}\v{r}sk\'{a} 2, 611 37 Brno, Czech Republic\vspace{5pt}\newline
\textit{Secondary address:}\newline
Department of Mathematics, University of Manchester,\newline 
Faculty of Science and Engineering, \newline
Alan Turing Building, M13 9PL Manchester, UK\newline
\textnormal{giacomo.tendas@manchester.ac.uk}
}
 
\begin{abstract}
We introduce enriched notions of purity depending on the left class $\ce$ of a factorization system on the base $\cv$ of enrichment. Ordinary purity is given by the class of surjective mappings in the category of sets. Under specific assumptions, covering enrichment over quantale-valued metric spaces, $\omega$-complete posets, and quasivarieties, we characterize the $(\lambda,\ce)$-injectivity classes of locally presentable 
$\cv$-categories in terms of closure under a class of limits, $\lambda$-filtered colimits, and $(\lambda,\ce)$-pure subobjects.
\end{abstract} 
\keywords{}
\subjclass{}

\maketitle
\tableofcontents

\section{Introduction}
The concept of purity originated in the theory of abelian groups (\cite{P})
and it is now well established both for modules (\cite{Pre11}) and in model theory in general (\cite{Ro}). The monograph \cite{AR} made purity a key concept of the theory of accessible categories. For instance, given a locally $\lambda$-presentable category $\ck$, it makes possible to characterize classes of objects injective with respect to a family of morphisms whose domains and codomains are $\lambda$-presentable, as classes closed under products, $\lambda$-directed colimits, and $\lambda$-pure subobjects (see \cite{RAB}). Model-theoretically, this
is a characterization of classes of models axiomatizable by regular theories in the logic $L_{\lambda\lambda}$ of $\lambda$-ary formulas.

Recently, the theory of enriched accessible categories
matured (\cite{LT,LT22}) but purity has remained unenriched. 
The first attempt to develop enriched purity was done in \cite{RT} for metric enriched categories. In \cite{R}, it was shown that in Banach spaces pure morphisms coincide with the well-established concept of an ideal. Our aim is to introduce enriched purity over a general base $\cv$, prove that most of the basic properties transfer to the enriched setting, and show that it makes possible to characterize enriched injectivity classes. 

Recall that a morphism $f\colon K\to L$ in a $\lambda$-accessible category
is $\lambda$-pure provided that in each commutative diagram 
\begin{center}
		\begin{tikzpicture}[baseline=(current  bounding  box.south), scale=2]
			
			\node (a'0) at (0,0.8) {$A$};
			\node (b0) at (0.9,0.8) {$B$};
			\node (c0) at (0,0) {$K$};
			\node (d0) at (0.9,0) {$L$};
			
			\path[font=\scriptsize]

			(a'0) edge [->] node [above] {$g$} (b0)
			(b0) edge [->] node [right] {$v$} (d0)
			(c0) edge [->] node [below] {$f$} (d0)
			(a'0) edge [->] node [left] {$u$} (c0);
		\end{tikzpicture}	
	\end{center}
where $A$ and $B$ are $\lambda$-presentable, there is a morphism $t\colon B\to K$, such that $tg=u$ (see \cite[2.27]{AR}). Equivalently, given $u\colon A\to K$ then the existence of $v\colon B\to L$ such that $vg=fu$, implies the existence of $t\colon B\to K$ with $tg=u$. Since the {\em existence} of a morphism can be reformulated using surjectivity in the category $\Set$ of sets, ordinary purity stems from the class $\ce$ of surjections in $\Set$. 

Building on \cite{LR12}, our enriched purity will depend on a chosen class $\ce$ of morphisms in $\cv$ which, in addition, arises as the left class of a factorization system. In metric-enriched categories these are the dense morphisms. 

In Sections~\ref{enr-purity1} and~\ref{enr-purity2} we propose two definition of $(\lambda,\ce)$-pure morphisms. The first one is stronger than the second but, under mild conditions, they coincide. These behave like $\lambda$-pure morphisms in ordinary categories; for instance we will see that they are left-cancellable, stable under composition, and satisfy many of the same properties as their unenriched counterpart. Moreover, they make possible the characterization of $(\lambda,\ce)$-injectivity classes (this enriched concept of injectivity was introduced in \cite{LR12}).

In metric-enriched categories, we get the notions of purity from \cite{RT} and the characterization of approximate $\lambda$-injectivity classes (that is, ($\lambda$, dense)-injectivity classes) proved in \cite{RT}. In additive categories, when considering the class of regular epimorphisms, enriched and unenriched purity coincide, as do enriched and unenriched injectivity classes: this is the case whenever we enrich over a symmetric monoidal quasivariety~(\cite{LT20}) with a regular projective unit. However, things change when considering, for instance, categories enriched over the symmetric monoidal category $\bo{DGAb}$ of differentially graded abelian groups, and still considering the class $\ce$ of the regular epimorphisms. Here, it became clear that ordinary purity could not be used to characterize such injectivity classes; the main obstacle being that, unlike in the previous situation, the unit of $\bo{DGAb}$ is not regular projective. We shall see that the corresponding DG-enriched notion of purity does not coincide with the ordinary one, and is in fact the missing piece needed in the characterization of DG-injectivity.

Our main result characterizing $(\lambda,\ce)$-injectivity classes is discussed in Section~\ref{E-is-inj} and is based on the assumption that the class $\ce$ itself is a (special) injectivity class in the ordinary category of arrows $\cv_0^\to$. This allows us to capture surjections (Remark~\ref{surjection1}), dense morphisms of (quantale-valued) metric spaces and Banach spaces, dense maps of $\omega$-complete posets, as well as regular epimorphisms in quasivarieties, all in the same general framework. Beside this, and some minor other assumptions, we shall also need a class of objects $\cg\subseteq\cv$ such that powers by $\cg$ satisfy a stability condition with respect to maps in $\ce$. Then we can prove:

\begin{theo:main}
	Under the assumptions above, the $(\lambda,\ce)$-injectivity classes in a locally $\lambda$-presentable $\cv$-category $\ck$ are precisely the classes closed under $\lambda$-filtered colimits, products, powers by $\cg$, and $(\lambda,\ce)$-pure subobjects.
\end{theo:main}

For $\cv=\Met,\CMet,\Ban$ or $\omega$-$\CPO$ with $\ce$ consisting of the dense maps, then $\cg=\{I\}$ is the singleton determined by the monoidal unit (so that powers by $\cg$ are trivial); instead, when $\cv=\bo{DGAb}$ and $\ce$ is formed by the regular epimorphisms, then $\cg=\{P_n\}_{n\in \mathbb Z}$ is given by the suspensions and de-suspensions of the mapping cone of the unit (Example~\ref{DG}). 

As a consequence of the theorem above, we prove (Corollary~\ref{orth->inj}) that enriched orthogonality classes in locally presentable $\cv$-categories arise as examples of $\ce$-injectivity classes, a well-known fact in ordinary category theory \cite[Chapter~4.A]{AR}.

We will discuss applications of Theorem~\ref{char-theo-1} in the setting of categories enriched over quantale-valued metric spaces in Section~\ref{quantales}, for categories enriched over $\omega$-complete poses in Section~\ref{omega-CPO}, and for categories enriched over symmetric monoidal quasivarieties in Section~\ref{quasiv}. In this latter case, we can also characterize $(\lambda,\ce)$-pure morphisms as $\lambda$-filtered colimits of {\em $\ce$-split} monomorphisms, generalizing the analogous result for ordinary categories \cite[2.30]{AR}.

While in this paper we already answer important questions about the relevance of enriched purity, there are still many open problems that wait to be addressed. For instance, when working with categories of structures of some relational language, ordinary pure morphisms correspond to the model theoretical notion of morphisms {\em elementary} with respect to positive-primitive formulas \cite[5.15]{AR}. A first glance at an enriched counterpart is given in Appendix~\ref{can-lang}. However, a general answer cannot be provided at this stage for the simple reason that enriched counterparts of these logical concepts have not yet been defined. We believe that a thorough study of enriched purity will be essential for the development of enriched notions of languages and models, thus covering an outstanding gap in the literature.

\section{Background}

Our base of enrichment will be a symmetric monoidal closed, complete and cocomplete category $\cv=(\cv_0,I,\otimes)$; the internal-hom is denoted by $[-,-]$ and makes $\cv$ into a $\cv$-category. We follow \cite{Kel82:book} for standard results about enrichment; in particular we shall make use of the notions of conical limits and colimits, and of powers and copowers, both arising as instances of the more general weighted limits and colimits.

Consider a $\cv$-category $\ck$ and an ordinary functor $H\colon\cc\to\ck_0$ into the underlying ordinary category of $\ck$; this corresponds to a $\cv$-functor $\cc_\cv\to\ck$ from the free $\cv$-category on $\cc$. When $\cc$ is small, the {\em (conical) limit of $H$ in $\ck$} is the data of an object $\lim H\in\ck$ together with isomorphisms in $\cv$ 
$$ \ck(K,\lim H)\cong \lim\ck(K,H-) $$
that are $\cv$-natural in $K\in\ck$, where on the right-hand-side we have the ordinary limit in $\cv_0$. It follows that $\lim H$, when it exists, is also the limit of $H$ in $\ck_0$; moreover $\lim H$ corresponds to the limit of $H$ weighted by $\Delta I\colon\cc_\cv\to\cv$ (see \cite[3.51]{Kel82:book}).

Given an object $X\in\cv$ and $K\in\ck$, the {\em power of $K$ by $X$} is the data of an object $X\pitchfork K\in\ck$ together isomorphisms in $\cv$
$$ \ck(L,X\pitchfork K)\cong [X,\ck(L,K)] $$
that are $\cv$-natural in $L\in\ck$. Dually, we denote by $X\cdot K$ the {\em copower of $K$ by $X$}. Note that when $\ck=\cv$, we have $X\pitchfork K\cong [X,K]$ and $X\cdot K\cong X\otimes K$.

Local presentability will also be a key concept in the following sections. For ordinary locally presentable categories our standard reference is \cite{AR}. In the enriched setting the concept was introduced by Kelly \cite{Kel82}: given a $\cv$-category $\ck$ with (conical) $\lambda$-filtered colimits, we say that $K\in\ck$ is {\em $\lambda$-presentable} (in the enriched sense) if $\ck(K,-)\colon\ck\to\cv$ preserves $\lambda$-filtered colimits. We denote by $\ck_\lambda$ the full subcategory of $\ck$ spanned by the $\lambda$-presentable objects; this is closed in $\ck$ under $\lambda$-small conical colimits and copowers by objects in $\cv_\lambda$. 

We say that $\ck$ is {\em locally $\lambda$-presentable as a $\cv$-category} if it is cocomplete (it has all conical colimits and all copowers), $\ck_\lambda$ is (essentially) small, and every object of $\ck$ can be written as a $\lambda$-filtered colimit of $\lambda$-presentable objects. If the unit $I$ of $\cv_0$ is locally $\lambda$-presentable in the ordinary sense, then the underlying category of every locally $\lambda$-presentable $\cv$-category is locally $\lambda$-presentable and $(\ck_\lambda)_0=(\ck_0)_\lambda$.

For the remainder of this paper we shall assume our base of enrichment $\cv$ to be {\em locally $\lambda$-presentable as a closed category} \cite{Kel82}, meaning that $\cv_0$ is locally $\lambda$-presentable, $I$ is $\lambda$-presentable and the $\lambda$-presentable are closed under tensor product (in particular $\cv$ is then locally $\lambda$-presentable as a $\cv$-category). In that context, many results from the ordinary theory of local presentability extend to the enriched context; see \cite{Kel82} for details. 

Finally, (orthogonal) factorization systems will be central in the definition of our enriched notions of purity. Following \cite{FK} we will say that a factorization system $(\ce,\cm)$ on a category $\ck$ is {\em proper} if every element of $\ce$ is an epimorphism and every element of $\cm$ a monomorphism.  By \cite[2.1.4]{FK}, when the factorization system is proper its is true that whenever a composite $fg\in\ce$ then also $f\in\ce$; we will make use of this property quite often.

In Section~\ref{E-is-inj} we will assume our fixed factorization system $(\ce,\cm)$ on $\cv$ to be {\em enriched} in the sense of \cite{Luc14}. This means that the class $\ce$ is closed in $\cv^\to$ under all copowers (if $e\in\ce$ and $X\in\cv$, then $X\otimes e\in\ce$), or equivalently that $\cm$ is closed in $\cv^\to$ under all powers (if $m\in\cm$ and $X\in\cv$, then $[X,m]\in\cm$).

\section{Enriched purity}\label{enr-purity1}

We fix $\cv$ to be locally $\lambda$-presentable as a closed category and $(\ce,\cm)$ to be factorization system on $\cv_0$. In this section we introduce the first of two concepts of enriched purity and prove several properties that this satisfies.

\begin{nota}\label{strong-pure}
	Let $\ck$ be a $\cv$-category and $(\ce,\cm)$ a factorization system on $\cv$. Consider morphisms $f\colon K\to L$ and $g\colon A\to B$ in $\ck$. We denote by $\cp(g,L)$ be the $(\ce,\cm)$ factorization 
		\begin{center}
		\begin{tikzpicture}[baseline=(current  bounding  box.south), scale=2]
			
			\node (21) at (0,0) {$\ck(B,L)$};
			\node (22) at (1.3,0) {$\cp(g,L)$};
			\node (23) at (2.6,0) {$\ck(A,L)$};
			
			\path[font=\scriptsize]
			
			(21) edge [->>] node [above] {} (22)
			(22) edge [>->] node [above] {} (23);
		\end{tikzpicture}	
	\end{center} 
	of the map $\ck(g,L)$ in $\cv$. Let then $\cp(g,f)$ be the pullback of $\cp(g,L)$ along $\ck(A,f)$, together with the induced map $r\colon\ck(B,K)\to\cp(g,f) $ as depicted below.
	\begin{center}
		\begin{tikzpicture}[baseline=(current  bounding  box.south), scale=2]
			
			\node (a0) at (-0.5,1.2) {$\ck(B,K)$};
			\node (b0) at (1,1.2) {$\ck(B,L)$};
			\node (c0) at (0,-0.2) {$\ck(A,K)$};
			\node (d0) at (1.5,-0.2) {$\ck(A,L)$};
			\node (b'0) at (0,0.55) {$\cp(g,f)$};
			\node (c'0) at (1.5,0.55) {$\cp(g,L)$};
			
			\path[font=\scriptsize]

			(a0) edge [->] node [above] {$\ck(B,f)$} (b0)
			(b'0) edge [->] node [above] {$r'$} (c'0)
			(b0) edge [->>] node [right] {} (c'0)
			(c'0) edge [>->] node [right] {} (d0)
			(c0) edge [->] node [below] {$\ck(A,f)$} (d0)
			(a0) edge [bend right,->] node [left] {$\ck(g,K)$} (c0)
			(a0) edge [dashed, ->] node [right] {$r$} (b'0)
			(b'0) edge [>->] node [right] {} (c0);
		\end{tikzpicture}	
	\end{center}
\end{nota}

\begin{defi}\label{pure-def1}
	We say that $f\colon K\to L$ is $\ce$-\textit{pure with respect to $g$} if the map $r$ above lies in $\ce$. We say that $f$ is $(\lambda,\ce)$-\textit{pure} if it is $\ce$-pure with respect to every $g\colon A\to B$ with $A$ and $B$ $\lambda$-presentable.
\end{defi}

Remark~\ref{explanation-purity} below gives a possibly more intuitive interpretation of the notion just introduced. In (1) of Example \ref{ex1} below we show that, over $\Set$, we get the standard notion of purity. The reader can also look at a model theoretic characterization of purity from Appendix~\ref{can-lang} in terms of {\em elementary morphisms}. We will spell-out in Sections~\ref{quantales}, \ref{omega-CPO}, and \ref{quasiv} what purity looks like for the bases of enrichment we are most interested in.

\begin{rem}\label{explanation-purity} 
	Since the elements of $\cm$ are stable under pullback, the map $r$ above lies in $\ce$ if and only if $\cp(g,f)$ is the $(\ce,\cm)$-factorization of $\ck(g,K)$. Thus $f\colon K\to L$ is $(\lambda,\ce)$-pure if and only if:
	\begin{center}{\em
		for every $g\colon A\to B$, with $A$ and $B$ $\lambda$-presentable,\\ 
		the $(\ce,\cm)$-factorization of $\ck(g,K)$ is obtained by pulling back the\\ $(\ce,\cm)$-factorization of $\ck(g,L)$ along $\ck(A,f)$.}
	\end{center} 
\end{rem}

\begin{rem}\label{surjection1}
	Following \cite{R1}, a morphism $f\colon V\to W$ in $\cv$ is called 
	\textit{surjective} if $\cv_0(I,f)$ is a surjection of sets. A map is called {\em injective} if it satisfies the unique right lifting property with respect to surjections. Quite often, (surjective, injective) forms a proper factorization system on $\cv$ (see \cite{R1}). 
\end{rem}

\begin{exams}\label{ex1}$  $
	{\setlength{\leftmargini}{1.6em}
	\begin{enumerate}
	\item Consider $\cv$ such that (surjective, injective) is a factorization system, and note that by \cite[3.4]{R1}, injective maps are monomorphisms. Under Notation~\ref{strong-pure}, it follows that $\cv_0(I,\cp(g,L))$ is the (epi, mono) factorization of $\ck_0(g,L)$, so that
	$$\ \ \ \ \cv_0(I,\cp(g,L))=\{ h\in\ck_0(A,L)\ |\ \exists\ v\in\ck_0(B,L) \text{ with } h= vg\} $$
	and hence, since $\cv_0(I,-)$ preserves pullbacks,
	$$ \ \ \ \ \cv_0(I,\cp(g,f))=\{ u\in\ck_0(A,K)\ |\ \exists\ v\in\ck_0(B,L) \text{ with } fu= vg\}. $$
	Then, $f$ is $\ce$-pure with respect to $g$, if and only if $\cv_0(I,r)$ is surjective, if and only if:
	\begin{center}\em
		for any $u\colon A\to K$ and $v\colon B\to L$ with $fu=vg$,\\ there exists $t\colon B\to K$ such that $tg=u$.
	\end{center}
	As a consequence, $\ce$-purity in $\ck$ coincides with ordinary purity in $\ck_0$. In particular, for $\cv=\Set$ we obtain the usual notion of pure morphisms.
	
	\item Given $\cv$, consider the factorization system $(\ce,\cm)=(\cv^{\to},\cv^{\cong})$, where $\cv^{\cong}$ is the class of all isomorphisms and $\cv^{\to}$ the class of all morphisms. Then, every morphism $f$ is $\cv^{\to}$-pure with respect to any $g\colon A\to B$; indeed, the map $r\colon\ck(B,K)\to\cp(g,f)$ of Notation~\ref{strong-pure} always lies in $\cv^{\to}$. 
	
	\item Consider instead the factorization system $(\ce,\cm)=(\cv^{\cong},\cv^{\to})$ on $\cv$, so that $\ce$ is the class of all isomorphisms. Given $f\colon K\to L$ and $g\colon A\to B$, it follows that $\cp(g,L)\cong\ck(B,L)$ and therefore $$\cp(g,f)\cong\ck^\to(g,f)$$ is the hom-object in the $\cv$-category of arrows $\ck^\to$. Then $f$ is $\cv^{\cong}$-pure with respect to $g$ if and only if $$\ck(B,K)\cong \ck^\to(g,f);$$ 
	looking at the underlying categories, this implies in particular that for every pair $u\colon A\to K$ and $v\colon B\to L$, such that $fu=vg$, there is a {\em unique} $t\colon B\to K$ such that $ft=v$ and $tg=u$.
	
	For $\ck$ locally $\lambda$-presentable, $(\lambda,\cv^{\cong})$-pure morphisms then coincide with the isomorphisms. On the one hand, isomorphisms are always $\cv^{\cong}$-pure with respect to any morphism. Conversely, assume that $f$ is $(\lambda,\cv^{\cong})$-pure, and write it as a $\lambda$-directed colimit of morphisms $f_i\colon K_i\to L_i$ between $\lambda$-presentable objects (see \cite[1.55(1)]{AR}); then for any $i$ we can consider the square 
	\begin{center}
		\begin{tikzpicture}[baseline=(current  bounding  box.south), scale=2]
			
			\node (a'0) at (0,0.8) {$K_i$};
			\node (b0) at (0.9,0.8) {$L_i$};
			\node (c0) at (0,0) {$K$};
			\node (d0) at (0.9,0) {$L$};
			
			\path[font=\scriptsize]

			(a'0) edge [->] node [above] {$f_i$} (b0)
			(b0) edge [->] node [right] {$v_i$} (d0)
			(c0) edge [->] node [below] {$f$} (d0)
			(a'0) edge [->] node [left] {$u_i$} (c0);
		\end{tikzpicture}	
	\end{center}
	where $(u_i,v_i)$ is part of the colimiting cocone of $f$. By $\cv^{\cong}$-purity of $f$, for any $i$ there exists a unique $t_i\colon L_i\to K$ such that $ft_i=v_i$ and $t_if_i=u_i$. It is easy to see that the $(t_i,v_i)$ also define a colimiting cocone for $f$ in $\cv^\to$. Therefore $f$ is a colimit of the isomorphisms $1_{L_i}\colon L_i\to L_i$, and thus $f$ itself is an isomorphism. 
	
	\item  Let $\CMet$ be the category of complete metric spaces and nonexpanding maps, where we allow distances $\infty$. Here, we have the proper factorization system (dense, isometry) --- see \cite[3.16(2)]{AR1}.
	
	Under this factorization system, given $g$ and $f$ as above, $\cp(f,g)$ is the sub-metric space of $\ck(A,K)$ consisting of those $u\colon A\to K$ such that
	\begin{center}\em 
		for every $\eps>0$ there exists $v\colon B\to L$ for which\\ $fu\sim_{\eps}vg$ (that is,  $d(fu,vg)\leq\eps$)
	\end{center} 
	Then $f$ is $\ce$-pure with respect to $g$ if and only if for any $u\in \cp(f,g)$ as above and for any $\eps>0$, there exists $t\colon B\to K$ such that $tg\sim_{\eps}u$.
	
	A priori this doesn't correspond to a notion of purity studied for metric-enriched categories. However, we will see in the more general setting of Section~\ref{quantales}, that this is an equivalent reformulation of the notions considered in \cite{RT}.
	
	\item  In the category $\Met$ of metric spaces and nonexpanding maps, we have two factorization systems: (dense, closed isometry) and (surjective, isometry) --- see \cite[3.16]{AR1}. The first one yields the same concept of purity described in (4) above, while the second one gives the usual (unenriched) concept of purity, as in (1). 
	
	Observe that surjections in $\CMet$ are the usual surjective maps but
	(surjective, injective), as defined in Remark~\ref{surjection1}, does not form a factorization system. For instance the morphism
	$\Bbb N\to \{\frac{1}{n}|n\in\Bbb N\}\cup\{0\}$ (sending $n$ to $\frac{1}{n}$), where the domain is discrete and the codomain has the usual metric, does not factorize.
	
	\item  Let $\cv=\bo{Gra}$ be the cartesian closed category of directed graphs (possibly) without loops. Consider the (regular epi, mono) factorization system. Given $f\colon K\to L$, this is pure in the enriched sense if for any morphism $g\colon A\to B$ between finitely presentable objects, and any vertices $u\in\ck(A,K)$ and $v\in\ck(B,L)$ for which $vg=fu$, there exists a vertex $t\in\ck(B,K)$ such that $tg=u$. Since not every vertex is a morphism in the underlying category (only the vertices with a loop are such), the notion of enriched purity is not the ordinary one on the underlying category; this is essentially because the unit is not regular projective. 
	
	\item  Let $\cv=\bo{DGAb}$ be the category of chain complexes with the (regular epi, mono) factorization. The notion of $\ce$-purity does not coincide with the ordinary one, as we see in more detail in Section~\ref{quasiv}.
	
	\item   Let $\cv=\bo{Ban}$ be the category of Banach spaces over $\mathbb C$ with linear maps of norm at most $1$. This is symmetric monoidal closed with $\otimes$ given by the projective tensor product, and internal hom $[K,L]$ is the space consisting of all bounded linear operators from $K$ to $L$ (see \cite[6.1.9h]{B}). Note that the monoidal unit $\mathbb C$ is a strong generator: $\bo{Ban}(\mathbb C,-)$ is the unit-ball functor, and an operator is an isomorphism in $\bo{Ban}$ if and only if it is a bijection on the unit balls. 
	
	Here, we can consider the (epi, strong mono) factorization system, which by \cite[1.15~\&~2.3]{Po} coincides with the (dense, isometry) factorization system; this induces a notion of purity similar to that of $\bo{Met}$ --- see in particular Examples~\ref{Ban1} and~\ref{Ban2}. 
	
	We can also consider the (strong epi, mono) factorization system. It follows from \cite[2.7~\&~2.10]{Po}  that regular and strong epimorphisms in $\bo{Ban}$ coincide.  Moreover, they are just the surjective operators, or equivalently, the operators $T$ for which $\bo{Ban}_0(\mathbb C, T)$ is surjective (\cite{Po} shows that strong/regular epimorphisms are surjective, but since homming-out of a strong generator reflects strong epimorphisms, then also every surjective operator is a strong epimorphism). Since $I=\Bbb C$, these coincide with the surjections from Remark~\ref{surjection1}. Thus the notion of purity corresponding to this factorization system coincides with the ordinary one on the underlying category.
\end{enumerate}}
\end{exams}

\begin{rem}
	If $\lambda\leq\lambda'$ then $(\lambda',\ce)$-pure morphisms are
	$(\lambda,\ce)$-pure.
\end{rem}

In the ordinary setting, $\lambda$-pure morphisms are closed under composition and satisfy the left-cancellation property (see \cite{AR} after 2.28). The same holds for $(\lambda,\ce)$-pure morphisms, as we now see:

\begin{propo}\label{composition1}
	A composition of two $(\lambda,\ce)$-pure morphisms is $(\lambda,\ce)$-pure.
\end{propo}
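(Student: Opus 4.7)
The plan is to show that for an arbitrary $g\colon A\to B$ between $\lambda$-presentable objects, the induced map $r\colon\ck(B,K)\to\cp(g,f_2f_1)$ lies in $\ce$, by decomposing $\cp(g,f_2f_1)$ through pullback pasting and reducing to the purity of $f_1$ and $f_2$ separately. So let $f_1\colon K\to L$ and $f_2\colon L\to M$ be $(\lambda,\ce)$-pure and fix such a $g$. Since $\ck(A,f_2f_1)=\ck(A,f_2)\ck(A,f_1)$, the pullback pasting lemma identifies $\cp(g,f_2f_1)$, defined as the pullback of $\cp(g,M)\rightarrowtail\ck(A,M)$ along $\ck(A,f_2f_1)$, with the pullback of $\cp(g,f_2)\rightarrowtail\ck(A,L)$ along $\ck(A,f_1)$.

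Next, Remark~\ref{explanation-purity} applied to $f_2$ gives that the factorization $\ck(B,L)\to\cp(g,f_2)\rightarrowtail\ck(A,L)$ of $\ck(g,L)$ has its left factor in $\ce$, hence by essential uniqueness of $(\ce,\cm)$-factorizations yields an isomorphism $\cp(g,f_2)\cong\cp(g,L)$ compatible with the monos into $\ck(A,L)$. Substituting this into the decomposition of the previous paragraph, $\cp(g,f_2f_1)$ is isomorphic to the pullback of $\cp(g,L)\rightarrowtail\ck(A,L)$ along $\ck(A,f_1)$, which is $\cp(g,f_1)$ by Notation~\ref{strong-pure}.

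Finally, I would identify the induced maps. By the universal property of the pullback, $r\colon\ck(B,K)\to\cp(g,f_2f_1)$ is characterized uniquely by its composite with $\cp(g,f_2f_1)\rightarrowtail\ck(A,K)$ being $\ck(g,K)$, and the same characterization holds for $r_1\colon\ck(B,K)\to\cp(g,f_1)$; since the two monos into $\ck(A,K)$ agree under the isomorphism $\cp(g,f_2f_1)\cong\cp(g,f_1)$ produced above, $r$ corresponds to $r_1$ under this isomorphism. The $(\lambda,\ce)$-purity of $f_1$ gives $r_1\in\ce$, and therefore $r\in\ce$ as well. The only delicate point is checking the compatibility of the various induced maps $r$, $r_1$ and of the monos under the iso $\cp(g,f_2)\cong\cp(g,L)$, which is a routine verification from the defining universal properties and uniqueness of factorizations.
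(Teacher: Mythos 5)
Your proof is correct and follows essentially the same route as the paper's: paste the pullbacks to identify $\cp(g,f_2f_1)$ with the pullback of $\cp(g,f_2)$ along $\ck(A,f_1)$, use the purity of $f_2$ to get $\cp(g,f_2)\cong\cp(g,L)$ and hence $\cp(g,f_2f_1)\cong\cp(g,f_1)$ with $r\cong r_1$, then conclude from the purity of $f_1$. The compatibility checks you flag at the end are exactly the routine verifications the paper leaves implicit.
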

\begin{proof}
	Consider two $(\lambda,\ce)$-pure morphisms $f_1\colon K\to L$ and $f_2\colon L\to M$,
	and a morphism $g\colon A\to B$ between $\lambda$-presentable objects. Let $f=f_2f_1$. In the diagram below
	\begin{center}
		\begin{tikzpicture}[baseline=(current  bounding  box.south), scale=2]
			
			\node (21) at (0,0) {$\ck(B,K)$};
			\node (22) at (1.5,0) {$\ck(B,L)$};
			
			\node (31) at (0,-0.8) {$\cp(g,f)$};
			\node (32) at (1.5,-0.8) {$\cp(g,f_2)$};
			\node (33) at (3,-0.8) {$\cp(g,M)$};
			
			\node (41) at (0,-1.6) {$\ck(A,K)$};
			\node (42) at (1.5,-1.6) {$\ck(A,L)$};
			
			\node (23) at (3,0) {$\ck(B,M)$};
			\node (43) at (3,-1.6) {$\ck(A,M)$};

			\node (c) at (0.75,-1.2) {$(a)$};
			\node (d) at (2.25,-1.2) {$(b)$};
			
			\path[font=\scriptsize]
			
			(21) edge [->] node [below] {} (22)
			(21) edge [->] node [left] {$r$} (31)
			(22) edge [->>] node [right] {$r_2$} (32)
			(31) edge [->] node [above] {} (32)
			(31) edge [>->] node [left] {} (41)
			(32) edge [>->] node [right] {} (42)
			(41) edge [->] node [below] {$\ck(A,f_1)$} (42)
			(22) edge [->] node [above] {} (23)
			(23) edge [->>] node [right] {} (33)
			(33) edge [>->] node [right] {} (43)
			(32) edge [->] node [above] {} (33)
			(42) edge [->] node [below] {$\ck(A,f_2)$} (43);
		\end{tikzpicture}	
	\end{center} 
	the squares $(a)$, $(b)$, and $(a+b)$ are pullbacks. By $\ce$-purity of $f_2$, the map $r_2$ lies in $\ce$ and thus $\cp(g,f_2)$ coincides with the $(\ce,\cm)$-factorization of $\ck(g,L)$. In other words $\cp(g,f_2)\cong \cp(g,L)$. Then, since $(a)$ is a pullback, $\cp(g,f)$ is also the pullback of $\cp(g,L)\rightarrowtail\ck(A,L)$ along $\ck(A,f_1)$; thus $\cp(g,f)\cong \cp(g,f_1) $ and $r\cong r_1$. Since $f_1$ is $(\lambda,\ce)$-pure this means that $r$ is in $\ce$, as required.
\end{proof}

\begin{propo}\label{cancell1}
	If $(\ce,\cm)$ is proper, then $(\lambda,\ce)$-pure morphisms are left-cancellable; that is, if $f=f_2f_1$ is $(\lambda,\ce)$-pure then $f_1$ is $(\lambda,\ce)$-pure.
\end{propo}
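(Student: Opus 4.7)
Fix $g\colon A\to B$ between $\lambda$-presentable objects; the task is to show that $r_1\colon\ck(B,K)\to\cp(g,f_1)$ lies in $\ce$. Since $\ck(A,f)=\ck(A,f_2)\circ\ck(A,f_1)$, the pullback defining $\cp(g,f)$ admits a two-step decomposition through $\cp(g,f_2)$ over $\ck(A,L)$; in particular $\cp(g,f)$ is also the pullback of $\cp(g,f_2)\rightarrowtail\ck(A,L)$ along $\ck(A,f_1)$. The naturality of the $(\ce,\cm)$-factorization with respect to $f_2$ yields (by diagonal fill-in) a canonical map $\cp(g,L)\to\cp(g,M)$, which together with $\cp(g,L)\rightarrowtail\ck(A,L)$ produces a map $\cp(g,L)\to\cp(g,f_2)$ via the universal property of that pullback. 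Composing with $\cp(g,f_1)\to\cp(g,L)$ and pairing with $\cp(g,f_1)\to\ck(A,K)$, the universal property of $\cp(g,f)$ then delivers a canonical comparison $j\colon\cp(g,f_1)\to\cp(g,f)$ such that the projection $\cp(g,f)\to\ck(A,K)$ composed with $j$ equals $\cp(g,f_1)\to\ck(A,K)$; a short diagram chase, checking both projections out of $\cp(g,f)$, further yields the identity $j\circ r_1=r$.

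Next I claim $j\in\ce\cap\cm$. For $j\in\cm$: both $\cp(g,f_1)\rightarrowtail\ck(A,K)$ and $\cp(g,f)\rightarrowtail\ck(A,K)$ are in $\cm$ (as pullbacks of $\cm$-maps), and the former factors through the latter via $j$; by the cancellation property of $\cm$ in a proper factorization system---if $hk\in\cm$ and $h\in\cm$ then $k\in\cm$, analogous to the $\ce$-cancellation recalled in the background---one obtains $j\in\cm$. For $j\in\ce$: the assumption that $f=f_2f_1$ is $(\lambda,\ce)$-pure gives $r\in\ce$, so $j\circ r_1\in\ce$, whence $j\in\ce$ by the $\ce$-cancellation property.

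Any morphism lying in both $\ce$ and $\cm$ is an isomorphism---a standard consequence of the diagonal fill-in applied to the commutative square whose vertical sides are $j$ and whose horizontal sides are identities. Thus $j$ is invertible, and $r_1=j^{-1}\circ r$ belongs to $\ce$, as required; since $g$ was arbitrary, $f_1$ is $(\lambda,\ce)$-pure.

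The main obstacle I anticipate is the construction of $j$ and verification of the identity $j\circ r_1=r$: these require a careful unwinding of the various pullback and factorization universal properties involved. Once they are in place, the conclusion is a purely formal consequence of the properness of $(\ce,\cm)$.
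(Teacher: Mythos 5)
Your proof is correct and follows essentially the same route as the paper: both construct the comparison map $\cp(g,f_1)\to\cp(g,f)$ (your $j$, the paper's $s$) via the universal property of the pullback, observe it lies in $\cm$ by cancellation over $\ck(A,K)$ and in $\ce$ by properness applied to $j\circ r_1=r$, and conclude it is an isomorphism so that $r_1\in\ce$. Your version merely spells out the construction of $j$ through the two-step pullback decomposition in more detail than the paper does.
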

\begin{proof}
	Let $f=f_2f_1$ be $(\lambda,\ce)$-pure. Then there is a morphism $s\colon\cp(g,f_1)\to\cp(g,f)$ induced by the universal property of the pullback defining $\cp(g,f)$; this $s$ makes the diagram below commute,
	\begin{center}
		\begin{tikzpicture}[baseline=(current  bounding  box.south), scale=2]
			
			\node (a0) at (0,0.8) {$\ck(B,K)$};
			\node (b0) at (1.3,0.8) {$\cp(g,f_1)$};
			\node (c0) at (0,0) {$\cp(g,f)$};
			\node (d0) at (1.3,0) {$\ck(A,K)$};
			
			\path[font=\scriptsize]

			(a0) edge [->] node [above] {$r_1$} (b0)
			(b0) edge [>->] node [right] {} (d0)
			(c0) edge [>->] node [below] {} (d0)
			(b0) edge [->] node [below] {$s$} (c0)
			(a0) edge [->>] node [left] {$r$} (c0);
		\end{tikzpicture}	
	\end{center}
	where $r_1$ denotes the dashed map from Notation~\ref{strong-pure} corresponding to $f_1$.
	Since the morphisms into $\ck(A,K)$ are in $\cm$, so is $s$. Moreover, since the factorization system is proper and $r$ is in $\ce$, also $s$ is in $\ce$. Therefore $r$ is an isomorphism and $r_1$ is in $\ce$; thus $f_1$ is $(\lambda,\ce)$-pure.
\end{proof}

Another property of $\lambda$-pure morphism that extends to our enriched notion, is their closure under $\lambda$-filtered colimits. We do not know whether a result like \cite[Proposition~2.30(ii)]{AR}, expressing every $\lambda$-pure morphism in a locally $\lambda$-presentable category as a $\lambda$-filtered colimit of split monomorphisms, can be obtained in this generality. However, something can be achieved when we enrich over quasivarieties, see Proposition~\ref{filt colimit of E-split}. 

\begin{propo}\label{filt-col-pure1}
	If $\cm$ is closed under $\lambda$-filtered colimits in $\cv^\to$, then any $\lambda$-filtered colimit of $(\lambda,\ce)$-pure morphisms in $\ck^\to$ is $(\lambda,\ce)$-pure.
\end{propo}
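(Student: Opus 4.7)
The plan is to reduce $(\lambda,\ce)$-purity of the colimit $f$ to that of the individual $f_i$ by showing that both the construction $\cp(g,-)$ and the structural map $r$ from Notation~\ref{strong-pure} commute with $\lambda$-filtered colimits, under the hypothesis on $\cm$. So: write $f=\colim_i f_i$ in $\ck^\to$ with each $f_i\colon K_i\to L_i$ being $(\lambda,\ce)$-pure, so that $K=\colim K_i$ and $L=\colim L_i$. Fix $g\colon A\to B$ between $\lambda$-presentable objects; the task is to verify that the map $r\colon\ck(B,K)\to\cp(g,f)$ lies in $\ce$.

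Because $A$ and $B$ are $\lambda$-presentable, $\ck(A,-)$ and $\ck(B,-)$ preserve $\lambda$-filtered colimits, so each of the hom-objects $\ck(A,K)$, $\ck(B,K)$, $\ck(A,L)$, $\ck(B,L)$ and the induced maps $\ck(A,f)$, $\ck(B,f)$ arise as the corresponding $\lambda$-filtered colimit of their $i$-indexed versions. I then claim $\cp(g,L)\cong\colim_i\cp(g,L_i)$: taking the $\lambda$-filtered colimit of the $(\ce,\cm)$-factorizations $\ck(B,L_i)\twoheadrightarrow\cp(g,L_i)\rightarrowtail\ck(A,L_i)$, the first leg remains in $\ce$ because the left class of any factorization system is closed under arbitrary colimits in $\cv^\to$, while the second leg remains in $\cm$ by the standing hypothesis on $\cm$. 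Uniqueness of $(\ce,\cm)$-factorizations then delivers the identification.

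For the pullback defining $\cp(g,f)$, I exploit that $\cv_0$ is locally $\lambda$-presentable, so $\lambda$-filtered colimits commute with finite limits in $\cv_0$, and in particular with pullbacks. Combined with the previous two paragraphs this yields $\cp(g,f)\cong\colim_i\cp(g,f_i)$ and identifies the induced map $r$ with $\colim_i r_i$. Each $r_i$ lies in $\ce$ by $(\lambda,\ce)$-purity of $f_i$, and $\ce$ is closed under colimits in $\cv^\to$, so $r\in\ce$. Since $g$ was arbitrary, $f$ is $(\lambda,\ce)$-pure.

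The only real content is the identification $\cp(g,L)\cong\colim_i\cp(g,L_i)$: the hypothesis on $\cm$ is precisely what keeps the colimit of the right-halves inside $\cm$, and without it the naive colimit of the factorizations need not be an $(\ce,\cm)$-factorization. Once that step is secured, the commutation of pullbacks with $\lambda$-filtered colimits in the locally $\lambda$-presentable $\cv_0$, together with the colimit-closure of $\ce$ in $\cv^\to$, make the remaining steps essentially formal.
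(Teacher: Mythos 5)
Your proof is correct and follows essentially the same route as the paper's: both arguments use $\lambda$-presentability of $A$ and $B$ to express the relevant hom-objects as $\lambda$-filtered colimits, identify the colimit of the factorizations $\cp(g,-)$ as an $(\ce,\cm)$-factorization via closure of $\ce$ under all colimits and of $\cm$ under $\lambda$-filtered colimits, and then invoke commutation of $\lambda$-filtered colimits with pullbacks to conclude that $r=\colim_i r_i$ lies in $\ce$. The only cosmetic difference is the order of steps (you identify $\cp(g,L)\cong\colim_i\cp(g,L_i)$ first and then pull back, while the paper first forms the factorization of $\ck(g,K)$ and then matches it with the pullback), which does not change the substance.
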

\begin{proof}
	Let $f\colon K\to L$ be a $\lambda$-filtered colimit in $\ck^\to$ of some $(\lambda,\ce)$-pure morphisms $f_i\colon K_i\to L_i$, and consider any $g\colon A\to B$ in $\ck_\lambda$. Since $\ck(A,K)$ is the $\lambda$-filtered colimit of $\ck(A,K_i)$ and $\ck(B,K)$ is the $\lambda$-filtered colimit of $\ck(B,K_i)$, the map 
$$
\ck(g,K)\colon \ck(B,K)\to \ck(A,K)
$$ 
is the $\lambda$-filtered colimit of the maps 
$$
\ck(g,K_i)\colon \ck(B,K_i)\to \ck(A,K_i);
$$ 
the same applies for $L$ in place of $K$. Now, by $(\lambda,\ce)$-purity of each $f_i$, the $(\ce,\cm)$-factorization of each $\ck(g,K_i)$ is given by the object $\cp(g,f_i)$, see Remark~\ref{explanation-purity}. By orthogonality of the factorization system, we obtain a $\lambda$-filtered diagram on the $\cp(g,f_i)$ whose colimit $X(g,f)\in\cv$ induces the $(\ce,\cm)$-factorization of $\ck(g,K)$; this is because both $\ce$ and $\cm$ are closed under $\lambda$-filtered colimits in $\ck^\to$. But $\lambda$-filtered colimits commute with pullbacks in $\cv$; thus $X(g,f)$ is also the pullback along $\ck(A,f)$ of the colimit of $\cp(g,L_i)$ . Since this colimit coincides with $\cp(g,L)$ then $X(g,f)\cong\cp(g,f)$ and thus $f$ is $(\lambda,\ce)$-pure with respect to $g$.
\end{proof}

As a consequence we can show that under some mild assumptions, in a locally $\lambda$-presentable $\cv$-category the ordinary notion of purity is always stronger than the enriched one:

\begin{coro}\label{pure-Epure}
	If $(\ce,\cm)$ is proper and $\cm$ is closed under $\lambda$-filtered colimits in $\cv^\to$, then every ordinary $\lambda$-pure morphism in a locally $\lambda$-presentable $\ck$ is $(\lambda,\ce)$-pure.
\end{coro}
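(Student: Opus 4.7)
The plan is to reduce to the case of split monomorphisms by invoking the ordinary characterization of pure morphisms in \cite[Proposition~2.30]{AR}. Since $\cv$ is locally $\lambda$-presentable as a closed category, the unit $I$ is $\lambda$-presentable and hence $\ck_0$ is locally $\lambda$-presentable; moreover, $\lambda$-filtered colimits in $\ck_0^\to$ are automatically conical $\lambda$-filtered colimits in $\ck^\to$. An ordinary $\lambda$-pure morphism $f\colon K\to L$ in $\ck_0$ can therefore be written as a $\lambda$-filtered colimit in $\ck^\to$ of split monomorphisms $f_i\colon K_i\to L_i$. So if every split monomorphism is $(\lambda,\ce)$-pure, Proposition~\ref{filt-col-pure1} (whose closure hypothesis on $\cm$ is assumed here) delivers the result.

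The key lemma to establish is therefore: \emph{every split monomorphism $s\colon K\to L$ in $\ck$ is $\ce$-pure with respect to any $g\colon A\to B$ in $\ck$}. I would fix a retraction $\rho\colon L\to K$ and consider the $(\ce,\cm)$-factorizations $\ck(g,K)=m_K e_K$ (with image object $X$) and $\ck(g,L)=m_L e_L$ (with image $\cp(g,L)$). The naturality identity $\ck(A,s)\ck(g,K)=\ck(g,L)\ck(B,s)$ together with the diagonal fill-in of $(\ce,\cm)$ produces a map $d\colon X\to\cp(g,L)$ with $m_L d=\ck(A,s)m_K$ and $d e_K=e_L\ck(B,s)$; symmetrically, the naturality identity for $\rho$ produces $\sigma\colon\cp(g,L)\to X$ with $m_K\sigma=\ck(A,\rho)m_L$. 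My claim is then that the square with sides $d$, $m_K$, $m_L$ and $\ck(A,s)$ is a pullback; granting this, the induced comparison $X\to\cp(g,s)$ is an isomorphism, which identifies the canonical map $r\colon\ck(B,K)\to\cp(g,s)$ with $e_K\in\ce$, as required.

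I expect the pullback verification to be the main technical step, although it is mostly formal once $d$ and $\sigma$ are in hand. Given a compatible pair $(z_1,z_2)$ with $\ck(A,s)z_1=m_L z_2$, the candidate lift is $\sigma z_2$: using $m_K\sigma=\ck(A,\rho)m_L$ and $\rho s=1_K$ we compute $m_K\sigma z_2=\ck(A,\rho s)z_1=z_1$, while $m_L d\sigma z_2=\ck(A,s)z_1=m_L z_2$ cancels to $d\sigma z_2=z_2$ because $m_L$ is a monomorphism. This last step is precisely where properness of $(\ce,\cm)$ enters, via the fact that elements of $\cm$ are monos; uniqueness of the lift then follows from $m_K$ being mono. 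Combining this lemma with \cite[Proposition~2.30]{AR} and Proposition~\ref{filt-col-pure1} completes the argument.
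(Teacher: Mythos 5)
Your proof is correct, and its overall skeleton is exactly the paper's: write the ordinary $\lambda$-pure morphism as a $\lambda$-filtered colimit of split monomorphisms via \cite[Proposition~2.30]{AR}, observe that split monomorphisms are $(\lambda,\ce)$-pure, and conclude by Proposition~\ref{filt-col-pure1}. The only divergence is in how the split-mono step is handled. The paper disposes of it in one line: identities are $(\lambda,\ce)$-pure, $1_K=\rho s$, and $(\lambda,\ce)$-pure morphisms are left-cancellable when $(\ce,\cm)$ is proper (Proposition~\ref{cancell1}), hence $s$ is $(\lambda,\ce)$-pure. You instead verify the split-mono case directly, constructing the two diagonal fill-ins $d$ and $\sigma$ and checking that the square on $d$, $m_K$, $m_L$, $\ck(A,s)$ is a pullback, so that the image of $\ck(g,K)$ is identified with $\cp(g,s)$ and the comparison map $r$ with $e_K\in\ce$. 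Your computation is sound, and properness enters exactly where you say it does --- only to make $m_K$ and $m_L$ monomorphisms so that the lift $\sigma z_2$ is forced and unique; as a bonus your argument shows that a split monomorphism is $\ce$-pure with respect to \emph{every} $g$, not only those with $\lambda$-presentable domain and codomain (the paper's route gives this too, since the proof of Proposition~\ref{cancell1} works one $g$ at a time, but it is not stated that way). What the paper's route buys is brevity and reuse of an already-established structural property; what yours buys is self-containedness, essentially unwinding what the cancellability argument does in this special case. The hypotheses are consumed identically in both versions: properness for the split-mono step, and closure of $\cm$ under $\lambda$-filtered colimits for Proposition~\ref{filt-col-pure1}.
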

\begin{proof}
	Let $f$ be an ordinary $\lambda$-pure morphism in $\ck_0$. Then we can write $f$ as a $\lambda$-filtered colimit of split monomorphisms in $\ck^\to$ by \cite[Proposition~2.30]{AR}. Now, every split monomorphism is $(\lambda,\ce)$-pure by Proposition~\ref{cancell1} since identities are always $(\lambda,\ce)$-pure. Thus $f$ itself is  $(\lambda,\ce)$-pure by Proposition~\ref{filt-col-pure1} above.
\end{proof}

Next we study preservation of purity by $\cv$-functors. 

\begin{propo}\label{right-adj}
	Any right adjoint $\cv$-functor $F\colon\ck\to\cl$ preserving $\lambda$-filtered colimits sends $(\lambda,\ce)$-pure morphisms in $\ck$ to $(\lambda,\ce)$-pure morphisms in $\cl$.
\end{propo}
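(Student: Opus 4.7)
The plan is to transfer the $(\lambda,\ce)$-purity condition across the adjunction. Let $G\dashv F$ with $G\colon\cl\to\ck$ the left $\cv$-adjoint. The key structural observation is that since $F$ preserves $\lambda$-filtered colimits, $G$ preserves $\lambda$-presentable objects: this is the standard enriched version of the fact that adjoints to accessible right adjoints preserve presentability, and it holds in our setting because $\ck(GA,-)\cong\cl(A,F-)$ commutes with $\lambda$-filtered colimits whenever $A\in\cl_\lambda$.

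Given any $g\colon A\to B$ between $\lambda$-presentable objects of $\cl$, I would then reproduce the diagram of Notation~\ref{strong-pure} for $Ff$ against $g$ inside $\cl$, and show it is isomorphic, component by component and as a whole configuration, to the corresponding diagram for $f$ against $Gg$ inside $\ck$. Concretely, the $\cv$-adjunction gives $\cv$-natural isomorphisms
$$
\cl(A,F-)\cong\ck(GA,-),\qquad \cl(B,F-)\cong\ck(GB,-),
$$
which identify $\cl(g,FL)$ with $\ck(Gg,L)$ and $\cl(g,FK)$ with $\ck(Gg,K)$. Since $(\ce,\cm)$-factorizations are unique up to isomorphism, these identifications send the factorization of $\cl(g,FL)$ to that of $\ck(Gg,L)$, hence identify $\cp(g,FL)$ with $\cp(Gg,L)$. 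Because pullbacks in $\cv$ are preserved by any $\cv$-natural isomorphism, the pullback $\cp(g,Ff)$ is identified with $\cp(Gg,f)$, and the induced map $r$ for $Ff$ with respect to $g$ is identified with the induced map $r'$ for $f$ with respect to $Gg$.

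Now $Gg$ is a morphism between $\lambda$-presentable objects of $\ck$ by the first step, so $(\lambda,\ce)$-purity of $f$ yields $r'\in\ce$. Since the identification above is an isomorphism in $\cv^\to$ and $\ce$ is closed under isomorphism, we conclude $r\in\ce$, proving that $Ff$ is $\ce$-pure with respect to $g$. As $g$ was arbitrary, $Ff$ is $(\lambda,\ce)$-pure.

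The only place that needs a little care is checking that the adjunction isomorphisms really do identify the whole configuration, i.e. compatibility with the square $\cl(B,Ff)$ versus $\ck(GB,f)$, and with the induced map $\ck(g,FK)\to\ck(A,FK)$. This is routine $\cv$-naturality of the adjunction counit/unit applied to $f$ and to $g$, so I do not expect any genuine obstacle; the proof is essentially formal once the preservation of $\lambda$-presentability by $G$ is in hand.
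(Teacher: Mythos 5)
Your proposal is correct and follows essentially the same route as the paper: transfer the whole configuration of Notation~\ref{strong-pure} across the adjunction isomorphisms $\cl(A,F-)\cong\ck(GA,-)$, note that the left adjoint preserves $\lambda$-presentable objects because $F$ preserves $\lambda$-filtered colimits, and conclude by applying the $(\lambda,\ce)$-purity of $f$ to $Gg$. The compatibility checks you flag at the end are exactly the ones the paper leaves implicit, and they are indeed routine $\cv$-naturality.
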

\begin{proof}
	Let $L\colon\cl\to\ck$ be the left adjoint to $F$; since $F$ preserves $\lambda$-filtered colimits then $L$ preserves the $\lambda$-presentable objects.
	Consider now a $(\lambda,\ce)$-pure morphism $f\colon K\to L$ in $\ck$ and any morphism $g\colon A\to B$ in $\cl$ between $\lambda$-presentable objects. Then $\cl (g,FL)\cong \ck(Lg,L)$ and $\cl (A,Ff)\cong \ck(LA,f)$, so that
	$$\cp(g,FL)\cong \cp(Lg,L)\ \text{ and }\ \cp(g,Ff)\cong \cp(Lg,f) $$ 
as well as $r_{g,Ff}\cong r_{Lg,f}$ in $\cv^\to$, where indices denote to which pair of morphisms Notation~\ref{strong-pure} applies. 
Now, since $A$ and $B$ are $\lambda$-presentable in $\cl$, the morphism $Lg$ has $\lambda$-presentable domain and codomain in $\ck$; thus we can apply the $(\lambda,\ce)$-purity of $f$ in $\ck$ to deduce that $ r_{Lg,f}$ lies in $\ce$. Since that is isomorphic to $r_{g,Ff}$, the $(\lambda,\ce)$-purity of $Ff$ follows.
\end{proof}

As a corollary we obtain that homming out of $\lambda$-presentable objects, as well as taking powers by them, preserves $(\lambda,\ce)$-purity:

\begin{coro}\label{representablypure1}
	Let $f\colon K\to L$ be a $(\lambda,\ce)$-pure morphism in a $\cv$-category $\ck$ with copowers; then:\begin{enumerate}
		\item $\ck(C,f)$ is $(\lambda,\ce)$-pure in $\cv$ for any $C\in\ck_\lambda$;
		\item if $\ck$ also has powers, $C\pitchfork f$ is $(\lambda,\ce)$-pure in $\ck$ for any $C\in\cv_\lambda$.
	\end{enumerate}
\end{coro}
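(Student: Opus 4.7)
My plan is to deduce both items from Proposition~\ref{right-adj}, by exhibiting the relevant operations as right $\cv$-adjoint functors. An inspection of the proof of Proposition~\ref{right-adj} reveals that the only hypothesis actually used is that the left adjoint preserves $\lambda$-presentable objects, so I will check this equivalent form of the preservation condition.

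For (1), I take $F = \ck(C,-)\colon\ck\to\cv$. Since $\ck$ has copowers, $F$ is right $\cv$-adjoint to $-\cdot C\colon\cv\to\ck$, as witnessed by the defining isomorphism $\ck(X\cdot C,K)\cong [X,\ck(C,K)]$. The assumption $C\in\ck_\lambda$ is by definition the statement that $\ck(C,-)$ preserves $\lambda$-filtered colimits, so Proposition~\ref{right-adj} applies directly and gives $(\lambda,\ce)$-purity of $\ck(C,f)$ in $\cv$.

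For (2), I take $F = C\pitchfork -\colon\ck\to\ck$. The composite
$$\ck(C\cdot K,L)\cong [C,\ck(K,L)]\cong \ck(K,C\pitchfork L)$$
exhibits $F$ as right $\cv$-adjoint to $C\cdot -\colon\ck\to\ck$. To verify the hypothesis of Proposition~\ref{right-adj} via its left adjoint, I check that $C\cdot -$ preserves $\lambda$-presentable objects: for $K\in\ck_\lambda$, the isomorphism $\ck(C\cdot K,-)\cong [C,\ck(K,-)]$ writes $\ck(C\cdot K,-)$ as a composite of two functors preserving $\lambda$-filtered colimits---$\ck(K,-)$ because $K\in\ck_\lambda$, and $[C,-]$ because $C\in\cv_\lambda$---so $C\cdot K\in\ck_\lambda$ and Proposition~\ref{right-adj} again applies. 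No substantive obstacle arises; the only mild subtlety, present in (2), is to notice that the preservation hypothesis is best certified through the left adjoint rather than directly on powers.
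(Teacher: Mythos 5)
Your proof is correct and follows essentially the same route as the paper: both items are deduced from Proposition~\ref{right-adj} via the copower and power adjunctions $(-)\cdot C\dashv\ck(C,-)$ and $C\cdot(-)\dashv C\pitchfork(-)$. The only (harmless) variation is in (2), where the paper directly asserts that $C\pitchfork(-)$ preserves $\lambda$-filtered colimits because $C\in\cv_\lambda$, while you verify the hypothesis through the left adjoint preserving $\lambda$-presentables --- which is exactly the form in which the hypothesis is used inside the proof of Proposition~\ref{right-adj}, so your check is if anything slightly more careful.
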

\begin{proof}
	(1) It follows by Proposition~\ref{right-adj} above since $\ck(C,-)$ preserves $\lambda$-filtered colimits and has a left adjoint $(-)\cdot C\colon\cv\to\ck $.
	
	(2) Similarly, the $\cv$-functor $C\pitchfork(-)\colon\ck\to\ck$ preserves $\lambda$-filtered colimits (since $C$ is $\lambda$-presentable) and has a left adjoint $C\cdot (-)\colon\ck\to\ck$. Thus purity is preserved thanks to Proposition~\ref{right-adj} above.
\end{proof}

In the proposition below we denote by $\ce_\lambda$ the set of arrows in $\ce$ whose domain and codomain are $\lambda$-presentable in $\cv$, and by $\ce_\lambda^\perp$ the class of morphisms which are right orthogonal to every morphism in $\ce_\lambda$. 

\begin{propo}\label{good1}
	Assume that $\ce_\lambda^\perp=\cm$ and that $[e,X]\in\cm$ whenever $e\in\ce$ and $X\in\cv$. Then for any $(\lambda,\ce)$-pure $f\colon K\to L$ in a locally $\lambda$-presentable $\cv$-category $\ck$, we have that $\ck(C,f)\in\cm$ for any $C\in\ck$.
\end{propo}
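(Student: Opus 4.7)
The plan is first to reduce to the case $C\in\ck_\lambda$, and then to convert the $(\lambda,\ce)$-purity of $f$ (formulated in terms of morphisms of $\ck_\lambda$) into the orthogonality condition needed to conclude $\ck(C,f)\in\cm$.

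For the reduction, any $C\in\ck$ can be written as a $\lambda$-filtered colimit $C=\colim_i C_i$ with each $C_i\in\ck_\lambda$, whence $\ck(C,f)=\lim_i \ck(C_i,f)$ in the arrow category $\cv^\to$. Since by hypothesis $\cm=\ce_\lambda^\perp$ is defined by right orthogonality to a set of arrows, it is closed under arbitrary limits in $\cv^\to$; so it suffices to prove $\ck(C,f)\in\cm$ when $C\in\ck_\lambda$.

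Fix then $C\in\ck_\lambda$. By $\cm=\ce_\lambda^\perp$ it is enough to show, for each $e\colon X\to Y$ in $\ce_\lambda$, that $\ck(C,f)$ is right orthogonal to $e$. Since $\ck_\lambda$ is closed under copowers by $\cv_\lambda$, the copower $g:=e\cdot C\colon X\cdot C\to Y\cdot C$ lies in $\ck_\lambda$ and hence is a legitimate test for the $(\lambda,\ce)$-purity of $f$. Under the copower–hom adjunction one has $\ck(g,K)\cong[e,\ck(C,K)]$ and $\ck(g,L)\cong[e,\ck(C,L)]$, and by the second hypothesis both of these arrows already belong to $\cm$. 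Consequently each of their $(\ce,\cm)$-factorizations has the $\ce$-part an isomorphism; in particular $\cp(g,L)\cong[Y,\ck(C,L)]$. Combined with Remark~\ref{explanation-purity}, this forces $\cp(g,f)$ to be the pullback of $[e,\ck(C,L)]$ along $[X,\ck(C,f)]$, and the map $r\colon[Y,\ck(C,K)]\to\cp(g,f)$ belongs to $\ce$ by purity; since the pullback projection $\cp(g,f)\to[X,\ck(C,K)]$ lies in $\cm$ and its composite with $r$ equals $[e,\ck(C,K)]\in\cm$, uniqueness of $(\ce,\cm)$-factorizations forces $r$ to be invertible.

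Unpacking the pullback that defines $\cp(g,f)$, the previous paragraph shows precisely that the square
\[
\begin{array}{ccc}
[Y,\ck(C,K)] & \longrightarrow & [Y,\ck(C,L)] \\
\downarrow & & \downarrow \\
[X,\ck(C,K)] & \longrightarrow & [X,\ck(C,L)]
\end{array}
\]
(with verticals $[e,\ck(C,K)]$, $[e,\ck(C,L)]$ and horizontals $[Y,\ck(C,f)]$, $[X,\ck(C,f)]$) is a pullback in $\cv$. Applying $\cv_0(I,-)$ and the copower–hom adjunction once more, this is exactly the right orthogonality of $\ck(C,f)$ against $e$. The main subtlety is the translation between the two sides: $(\lambda,\ce)$-purity is a statement about morphisms of $\ck_\lambda$, whereas the target lives in $\cv$; the device of testing purity against $e\cdot C$, together with the second hypothesis that collapses both relevant $(\ce,\cm)$-factorizations to trivial ones, is exactly what bridges the two.
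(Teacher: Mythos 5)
Your proof is correct and follows essentially the same route as the paper's: reduce to $C\in\ck_\lambda$ by closure of $\cm$ under limits, then use the hypothesis $[e,-]\in\cm$ to trivialize the two $(\ce,\cm)$-factorizations and deduce that the relevant square of internal homs is a pullback, giving right orthogonality to $\ce_\lambda$. The only cosmetic difference is that where the paper first invokes Corollary~\ref{representablypure1} to transfer $(\lambda,\ce)$-purity along $\ck(C,-)$ and then works with $f$ inside $\cv$, you test the purity of $f$ in $\ck$ directly against the copower $e\cdot C$ and apply the adjunction $\ck(e\cdot C,K)\cong[e,\ck(C,K)]$ --- which is exactly the computation underlying that corollary.
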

\begin{proof}
	Since $\cm$ is closed under limits in $\cv^\to$, it is enough to prove the result for $C\in\ck_\lambda$. Furthermore, by Corollary~\ref{representablypure1} above, it is enough to show that any $(\lambda,\ce)$-pure morphism $f\colon K\to L$ in $\cv$ lies in $\cm$. 
	
	By the hypotheses on $\cm$, to prove that $f\in\cm$ it is enough to show that it is right orthogonal to every morphism in $\ce_\lambda$; for that it suffices to prove that the square below is a pullback.
	\begin{center}
		\begin{tikzpicture}[baseline=(current  bounding  box.south), scale=2]
			
			\node (a'0) at (0,0.9) {$[B,K]$};
			\node (b0) at (1.3,0.9) {$[B,L]$};
			\node (c0) at (0,0) {$[A,K]$};
			\node (d0) at (1.3,0) {$[A,L]$};
			
			\path[font=\scriptsize]

			(a'0) edge [->] node [above] {$[B,f]$} (b0)
			(b0) edge [->] node [right] {$[e,L]$} (d0)
			(c0) edge [->] node [below] {$[A,f]$} (d0)
			(a'0) edge [->] node [left] {$[e,K]$} (c0);
		\end{tikzpicture}	
	\end{center}
	Since $f$ is $(\lambda,\ce)$-pure, the $(\ce,\cm)$-factorization of $[e,K]$ is given by $\cp(e,f)$. But $[e,K]$ is in $\cm$ by hypothesis, then $[B,K]\cong \cp(e,f)$. Similarly, since $[e,L]$ is in $\cm$ by hypothesis, $[B,L]\cong\cp(g,L)$. It then follows by the definition of $\cp(g,f)$ that $[B,K]$ is the pullback of $[A,f]$ along $[g,L]$.
\end{proof}

\begin{rem}
	The equality $\ce_\lambda^\perp=\cm$ holds whenever the closure of $\ce_\lambda$ in $\ce$ under $\lambda$-filtered colimits is $\ce$ itself (because $\ce$ is closed under colimits). That is the case for example when $\cm$ is closed under $\lambda$-filtered colimits in $\cv^\to$ and $\cv_\lambda$ is closed under $(\ce,\cm)$-factorizations in $\cv$.
\end{rem}

\begin{rem}$ $
	{\setlength{\leftmargini}{1.6em}
	\begin{enumerate}
		
	\item If $\cv$ is a regular locally $\lambda$-presentable category and $\cv_\lambda$ is regularly embedded in $\cv$, the assumptions of \ref{good1} hold for the (regular epi, mono) factorization system since the class of monomorphisms are closed under $\lambda$-filtered colimits in $\cv^\to$ and $[e,X]$ is a monomorphisms whenever $e$ is a (regular) epimorphism.
	
	\item  The assumptions of \ref{good1} are also valid in $\CMet$ for $\ce=$ dense. The fact that $(\omega,\ce)$-pure morphisms are isometries, which follows from \ref{good1}, was proved in \cite[4.11]{R}.
	
	\item 	Proposition~\ref{good1} above does not hold in general for any factorization system. If we take $\ce=\ck^{\to}$ and $\cm=\ck^{\cong}$(see \ref{ex1}(2), then any morphism is $(\lambda,\ce)$-pure but $\ck(C,f)$ is an isomorphism for any $C\in\ck$ if and only if $f$ is. This factorization system satisfies $\ce_\lambda^\perp=\cm$, but not the requirement that $[e,X]\in\cm$ for any $e\in\ce$ and $X\in\cv$.
\end{enumerate}}
\end{rem}

Below, following \cite{LR12}, we say that a limit of some shape is {\em $\ce$-stable} if $\ce$ is closed under such limits in $\cv^\to$.

\begin{propo}\label{E-stable}
	The class of $(\lambda,\ce)$-pure morphisms in a $\cv$-category $\ck$ is stable under $\ce$-stable (conical) limits in $\ck^\to$. 
\end{propo}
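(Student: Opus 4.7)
The plan is to mimic the proof of Proposition~\ref{filt-col-pure1}, replacing the appeal to $\lambda$-filtered colimits (and the fact that such colimits commute with pullbacks) by an arbitrary limit of shape $\cd$ (pullbacks are themselves limits, hence commute with all limits). Concretely, let $H\colon\cd\to\ck^\to$ be a diagram of $(\lambda,\ce)$-pure morphisms $f_i\colon K_i\to L_i$ whose (conical) limit $f\colon K\to L$ in $\ck^\to$ is $\ce$-stable, i.e.\ $\ce$ is closed in $\cv^\to$ under limits of shape $\cd$. Fix $g\colon A\to B$ with $A,B\in\ck_\lambda$; I want to show that the induced $r\colon\ck(B,K)\to\cp(g,f)$ lies in $\ce$.

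First I would note that since representables preserve limits, $\ck(g,K)$ is the limit (in $\cv^\to$) of the morphisms $\ck(g,K_i)$, and likewise $\ck(g,L)$ is the limit of the $\ck(g,L_i)$. For each $i$, the $(\lambda,\ce)$-purity of $f_i$ together with Remark~\ref{explanation-purity} gives that $\cp(g,f_i)$ is the $(\ce,\cm)$-factorization of $\ck(g,K_i)$, and $\cp(g,L_i)$ is the $(\ce,\cm)$-factorization of $\ck(g,L_i)$. Now assemble the factorizations levelwise: the $\cm$-part $\cp(g,L_i)\rightarrowtail\ck(A,L_i)$ has a limit in $\cm$ (the right class of a factorization system is closed under all limits in $\cv^\to$), and the $\ce$-part $\ck(B,L_i)\twoheadrightarrow\cp(g,L_i)$ has a limit in $\ce$ by the $\ce$-stability hypothesis. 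Hence $\lim\cp(g,L_i)$ is the $(\ce,\cm)$-factorization of $\ck(g,L)$, so it coincides with $\cp(g,L)$. The same argument applied to $K$ shows that $\lim\cp(g,f_i)$ is the $(\ce,\cm)$-factorization of $\ck(g,K)$.

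It remains to identify $\lim\cp(g,f_i)$ with $\cp(g,f)$. Since each $\cp(g,f_i)$ is defined as the pullback of $\cp(g,L_i)\rightarrowtail\ck(A,L_i)$ along $\ck(A,f_i)$, and limits commute with pullbacks, the limit $\lim\cp(g,f_i)$ is the pullback of $\lim\cp(g,L_i)\cong\cp(g,L)$ along $\ck(A,f)$, which by definition is $\cp(g,f)$. Thus $\cp(g,f)$ is the $(\ce,\cm)$-factorization of $\ck(g,K)$, so $r$ lies in $\ce$ as required, and $f$ is $(\lambda,\ce)$-pure.

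The only delicate point is the bookkeeping that justifies ``$(\ce,\cm)$-factorizations commute with $\cd$-shaped limits'': for the $\cm$-side this is automatic, and for the $\ce$-side it is precisely what the $\ce$-stability assumption provides. Once that is in hand, everything else reduces to the interchange of limits with the defining pullback for $\cp(g,-)$, which is formal.
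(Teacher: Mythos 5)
Your proposal is correct and follows essentially the same route as the paper's proof: both rest on the facts that $\cm$ is closed under all limits, $\ce$ is closed under the given $\cd$-shaped limits, representables preserve limits, and pullbacks commute with limits, so that the diagram of Notation~\ref{strong-pure} for $(g,f)$ is the levelwise limit of the diagrams for $(g,f_i)$. The paper phrases the conclusion as $r=\lim r_i\in\ce$ while you phrase it via Remark~\ref{explanation-purity} as the identification of $\cp(g,f)$ with the $(\ce,\cm)$-factorization of $\ck(g,K)$, but these are the same argument.
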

\begin{proof}
	Assume that $f\colon K\to L$ is the $\ce$-stable limit of some $(\lambda,\ce)$-pure morphisms $f_i\colon K_i\to L_i$ in $\ck^\to$. For any $g\colon A\to B$ between $\lambda$-presentable objects, we can consider the diagram of Notation~\ref{strong-pure} applied to $f_i$ and $g$:
	\begin{center}
		\begin{tikzpicture}[baseline=(current  bounding  box.south), scale=2]
			
			\node (a0) at (-0.5,1.2) {$\ck(B,K_i)$};
			\node (b0) at (1,1.2) {$\ck(B,L_i)$};
			\node (c0) at (0,-0.2) {$\ck(A,K_i)$};
			\node (d0) at (1.5,-0.2) {$\ck(A,L_i)$};
			\node (b'0) at (0,0.55) {$\cp(g,f_i)$};
			\node (c'0) at (1.5,0.55) {$\cp(g,L_i)$};
			
			\path[font=\scriptsize]

			(a0) edge [->] node [above] {$\ck(B,f_i)$} (b0)
			(b'0) edge [->] node [above] {$r'_i$} (c'0)
			(b0) edge [->>] node [right] {} (c'0)
			(c'0) edge [>->] node [right] {} (d0)
			(c0) edge [->] node [below] {$\ck(A,f_i)$} (d0)
			(a0) edge [bend right,->] node [left] {$\ck(g,K_i)$} (c0)
			(a0) edge [->>] node [right] {$r_i$} (b'0)
			(b'0) edge [>->] node [right] {} (c0);
		\end{tikzpicture}	
	\end{center}
	Since the factorization system $(\ce,\cm)$ is functorial, $\ce$ and $\cm$ are stable under $\ce$-stable limits, pullbacks commute with all limits, and the representables preserve all limits, it follows that the componentwise limit of the diagram above in $\cv$, is exactly the diagram displayed in Notation~\ref{strong-pure} for $f$ and $g$. It follows in particular that 
	$$r\colon \ck(B,K)\to\cp(g,f)$$
	is the $\ce$-stable limit of the $r_i$'s. But each $r_i$ lies in $\ce$, by purity of $f_i$; thus also $r$ lies in $\ce$ and $f$ is $(\lambda,\ce)$-pure.
\end{proof}

Recall that, given a collection $A_i$, $i\in I$ of objects of a locally
presentable category $\ck$ and an ultrafilter $\cu$ on the set $I$, then
the ultraproduct $\prod_\cu A_i$ is defined as the colimit of the directed diagram
induced by the products $\prod_{i\in U} A_i$, where $U$ ranges through $\cu$, and
the projections $\prod_{i\in U}A_i\to\prod_{i\in U'}A_i$, where $U'\subseteq U$ (see, e.g., \cite{AR}).

\begin{coro}
	Let $(\ce,\cm)$ be proper and $\ce$ and $\cm$ be respectively closed under products and filtered colimits in $\cv^\to$. Then in a locally finitely presentable $\cv$-category $\ck$:\begin{enumerate}
		\item the class of $(\omega,\ce)$-pure morphisms is closed under ultraproducts;
		\item for any $K\in\ck$ the inclusion $d_K\colon K\to K^\cu$, of $K$ into its ultrapower by an ultrafilter $\cu$, is $(\omega,\ce)$-pure;
		\item $f\colon K\to L$ is $(\omega,\ce)$-pure if and only if the ultrapower $f^\cu\colon K^\cu\to L^\cu$ is  $(\omega,\ce)$-pure, for some ultrafilter $\cu$.
	\end{enumerate} 
\end{coro}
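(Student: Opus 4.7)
The plan is to derive all three statements directly from the closure properties of $(\omega,\ce)$-pure morphisms established earlier in this section, namely Propositions~\ref{composition1}, \ref{cancell1}, \ref{filt-col-pure1}, and \ref{E-stable}. Each of the required hypotheses is in force: $(\ce,\cm)$ is proper, $\cm$ is closed under filtered colimits in $\cv^\to$, and the assumption that $\ce$ is closed under products says precisely that products are $\ce$-stable limits, so Proposition~\ref{E-stable} can be applied to products.

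For (1), I would use the definition of ultraproduct recalled just before the corollary: $\prod_\cu f_i$ is the filtered colimit in $\ck^\to$ of the subproducts $\prod_{i \in U} f_i$ indexed by $U \in \cu$. Proposition~\ref{E-stable} yields that each such subproduct is $(\omega,\ce)$-pure whenever all $f_i$ are, and Proposition~\ref{filt-col-pure1} then propagates this through the filtered colimit. For (2), I would exhibit $d_K\colon K\to K^\cu$ as a filtered colimit in $\ck^\to$ of the diagonal maps $K \to K^U$, $U\in\cu$: the codomain diagram has colimit $K^\cu$ by the very definition of the ultrapower, while the domain diagram is constant at $K$ (hence has colimit $K$), so the colimit in $\ck^\to$ of these arrows is precisely $d_K$. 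Each $K \to K^U$ is a split monomorphism (any $i\in U$ gives a projection splitting it), hence by Proposition~\ref{cancell1} applied to the decomposition $\id_K = \pi_i \circ (K\to K^U)$ it is $(\omega,\ce)$-pure; another application of Proposition~\ref{filt-col-pure1} lifts this to $d_K$.

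For (3), the forward direction is exactly (1). For the converse, I would use naturality of the diagonal to write $d_L \circ f = f^\cu \circ d_K$. Assuming $f^\cu$ is $(\omega,\ce)$-pure, the right-hand side is $(\omega,\ce)$-pure by Proposition~\ref{composition1} together with part (2), so the left-hand side is as well, and Proposition~\ref{cancell1} (which uses properness of $(\ce,\cm)$) extracts that $f$ itself is $(\omega,\ce)$-pure.

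The only step that is more than mechanical assembly is the identification in (2) of $d_K$ as a filtered colimit of the split monomorphisms $K \to K^U$ in the arrow category $\ck^\to$; this requires careful bookkeeping of the domain and codomain components of the colimit, but otherwise the argument is essentially the enriched transcription of the classical ordinary-purity statement, so I do not anticipate any genuine obstacle.
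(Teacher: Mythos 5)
Your proposal is correct and takes essentially the same approach as the paper: parts (1) and (3) are verbatim the paper's argument, and in (2) you merely inline the content of Corollary~\ref{pure-Epure} (each split monomorphism $K\to K^U$ is $(\omega,\ce)$-pure by left-cancellation against the identity, and the filtered colimit over $U\in\cu$ is then handled by Proposition~\ref{filt-col-pure1}) instead of citing that corollary directly. No gaps.
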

\begin{proof}
	$(1)$. The ultraproduct of a family of morphisms $(f_i)_i$ in $\ck$ can be computed as a filtered colimit of products of subfamilies of the $(f_i)_i$. Thus it is enough to apply Proposition~\ref{E-stable} above and Proposition~\ref{filt-col-pure1}.
	
	$(2)$. The inclusion $d_K\colon K\to K^\cu$ is a filtered colimit of split monomorphisms, thus it is pure in the ordinary sense. By Corollary~\ref{pure-Epure} the inclusion is also $(\omega,\ce)$-pure.
	
	$(3)$. If $f\colon K\to L$ is $(\omega,\ce)$-pure then so is $f^\cu$ by point $(1)$. Conversely, if $f^\cu$ is $(\omega,\ce)$-pure then also the composite $f^\cu d_K$ is by point $(2)$ and Proposition~\ref{composition1}. But $f^\cu d_K=d_L f$; therefore $f$ is $(\omega,\ce)$-pure by Proposition~\ref{cancell1}.
\end{proof}

Next we recall the notion of $\ce$-injectivity class from \cite[Section~2]{LR12}: given a morphism $f\colon A\to B$ in a $\cv$-category $\ck$, we say that $X\in\ck$ is {\em $f$-injective over $\ce$} if the map
$$ \ck(f,X)\colon\ck(B,X)\longrightarrow\ck(A,X) $$
lies in $\ce$. Given a set of morphisms $\ch$ in $\ck$, we denote by $\tx{Inj}_\ce \ch$ the full subcategory of $\ck$ spanned by the object which are $f$-injective over $\ce$ for any $f\in\ch$. We call {\em $\ce$-injectivity class} any full subcategory of $\ck$ arising that way; if the morphisms in $\ch$ have $\lambda$-presentable domain and codomain, we call $\tx{Inj}_\ce \ch$ a {\em $(\lambda,\ce)$-injectivity class}. 

Then we can prove the following result. Note that below we use the term ``subobject'' when referring to the closure property under $(\lambda,\ce)$-pure morphism even though such morphisms need not be monomorphic in general; we do this since that is the standard way of phrasing the closure property. 

\begin{propo}\label{closure1}
	Every $(\lambda,\ce)$-injectivity class $\tx{Inj}_\ce \ch$ in a complete and cocomplete $\cv$-category $\ck$ is closed under:\begin{enumerate}
		\item $\ce$-stable limits;
		\item $\lambda$-filtered colimits;
		\item $(\lambda,\ce)$-pure subobjects; that is, if $f\colon K\to L$ is $(\lambda,\ce)$-pure and $L\in \tx{Inj}_\ce \ch$, then also $K\in \tx{Inj}_\ce \ch$.
	\end{enumerate}
\end{propo}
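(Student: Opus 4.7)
The plan is to verify each of the three closure properties by tracking how the functor $\ck(h,-)$, for $h\in\ch$, behaves under limits, $\lambda$-filtered colimits, and the pullback diagram defining $\cp(h,f)$ from Notation~\ref{strong-pure}.

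For (1), I would use that conical limits in $\ck$ are detected by representables: if $K=\lim K_i$ then $\ck(X,K)\cong\lim\ck(X,K_i)$ naturally in $X$, so the morphism $\ck(h,K)$ is the limit in $\cv^\to$ of the family $\ck(h,K_i)$. Each $\ck(h,K_i)$ lies in $\ce$ by $\ce$-injectivity of $K_i$, and $\ce$-stability of the limit (the assumption being exactly that $\ce$ is closed in $\cv^\to$ under such limits) then forces $\ck(h,K)\in\ce$.

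For (2), let $K=\colim K_i$ be a $\lambda$-filtered colimit of objects in $\tx{Inj}_\ce\ch$, and fix $h\colon A\to B$ in $\ch$. Since $A$ and $B$ are $\lambda$-presentable, both $\ck(A,-)$ and $\ck(B,-)$ preserve the colimit, so $\ck(h,K)$ is the $\lambda$-filtered colimit in $\cv^\to$ of the maps $\ck(h,K_i)$. Being the left class of an orthogonal factorization system on the cocomplete category $\cv$, the class $\ce$ is closed under all colimits, and in particular $\lambda$-filtered ones, in $\cv^\to$; hence $\ck(h,K)\in\ce$.

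For (3), let $f\colon K\to L$ be $(\lambda,\ce)$-pure with $L\in\tx{Inj}_\ce\ch$, and fix $h\colon A\to B$ in $\ch$. By Remark~\ref{explanation-purity}, the $(\ce,\cm)$-factorization of $\ck(h,K)$ is obtained by pulling back the $(\ce,\cm)$-factorization of $\ck(h,L)$ along $\ck(A,f)$. Since $\ck(h,L)\in\ce$, uniqueness of $(\ce,\cm)$-factorizations identifies its $\cm$-part $\cp(h,L)\rightarrowtail\ck(A,L)$ with an isomorphism. Pullbacks preserve isomorphisms, so the $\cm$-part $\cp(h,K)\rightarrowtail\ck(A,K)$ is also iso, which forces $\ck(h,K)$ itself to coincide with the $\ce$-part of its factorization; in particular $\ck(h,K)\in\ce$, so $K\in\tx{Inj}_\ce\ch$. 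The only real subtlety is in (3), where one must correctly invoke the pullback reformulation of purity from Remark~\ref{explanation-purity}; the other two parts are essentially formal consequences of how representables interact with (co)limits together with the closure properties of $\ce$ as the left class of a factorization system.
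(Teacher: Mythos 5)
Your proof is correct and follows essentially the same route as the paper: parts (1) and (2) are exactly the arguments encapsulated in the results of Lack--Rosick\'y that the paper cites (representables turning the (co)limit in $\ck$ into one in $\cv^\to$, plus closure of $\ce$ under $\ce$-stable limits and under all colimits), and part (3) is the paper's argument that $\ce$-injectivity of $L$ makes the $\cm$-part of $\cp(g,L)\rightarrowtail\ck(A,L)$ invertible, so its pullback $\cp(g,f)\rightarrowtail\ck(A,K)$ is invertible and purity identifies $\ck(g,K)$ with the map $r\in\ce$. (Only a notational slip: what you call $\cp(h,K)$ is the paper's $\cp(h,f)$.)
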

\begin{proof}
	(1) is given by \cite[Proposition~2.6]{LR12} and (2) by \cite[Proposition~2.7]{LR12} since $\ce$, being part of a factorization system, is closed in $\cv^\to$ under all conical colimits.
	
	(3) Take a $(\lambda,\ce)$-pure morphism $f\colon K\to L$ with $L\in\tx{Inj}_\ce \ch$. Let $g\colon A\to B$ be in $\ch$, which is then between $\lambda$-presentable objects.
	
	By $\ce$-injectivity of $L$ with respect to $g$, the map $\ck(g,L)$ is in $\ce$, so that $\cp(g,L)\cong \ck(A,L)$. It follows that $\cp(g,f)\cong \ck(A,K)$, it being the pullback of $\cp(g,L)$ along $\ck(A,f)$. Therefore the map $r$ appearing in the definition of $(\lambda,\ce)$-purity (Notation~\ref{strong-pure}) coincides with $$\ck(g,K)\colon\ck(B,K)\to\ck(A,K).$$
	Since $f$ is a $(\lambda,\ce)$-pure morphism, $\ck(g,K)$ lies in $\ce$ and hence $K$ is $\ce$-injective with respect to $g$.
\end{proof}

\begin{rem} 
	Point (3) of Proposition \ref{closure1} was proved in $\CMet$ as \cite[6.4]{R}.
\end{rem}

\begin{rem}\label{cond} 
	Suppose that $(\ce,\cm)$ is proper, $\cm$ is closed under $\lambda$-filtered colimits in $\cv^\to$, and that products are $\ce$-stable. Then every $(\lambda,\ce)$-injectivity class $\cl$ in a locally $\lambda$-presentable $\cv$-category $\ck$ is an ordinary $\lambda$-injectivity class.
	
	Indeed, by the proposition above $\cl$ is closed under products, $\lambda$-filtered colimits and $\lambda$-pure subobjects (see \ref{pure-E-pure}). Following \cite{RAB}, it is then a $\lambda$-injectivity class.
\end{rem}

\begin{rem}\label{basechange}
	Let $\cw=(\cw_0,I_\cw,\otimes_\cw)$ be locally $\lambda$-presentable as closed category, and $\cv$ be a locally $\lambda$-presentable $\cw$-category. Assume moreover that $\cv_0$ is endowed with an enriched symmetric monoidal closed structure $(\cv,I_\cv,\otimes_\cv)$ which makes it locally $\lambda$-presentable as a closed category. Then the $\cw$-functor $$U:=\cv(I_\cv,-)\colon\cv\to \cw$$ is continuous and preserves $\lambda$-filtered colimits; by local presentability of $\cv$ and $\cw$, it has a left adjoint $F\colon\cw\to\cv$ which is strong monoidal (\cite[2.1]{Kel73}).
	
	Under these assumptions there is a change of base functor $$U_*\colon\cv\tx{-}\bo{Cat}\to\cw\tx{-}\bo{Cat},$$ which sends a $\cv$-category $\ck$ to the $\cw$-category $U_*\ck$ with the same objects as $\ck$ and homs $$(U_*\ck)(X,Y):=U\ck(X,Y),$$ so that $\ck_0=(U_*\ck)_0$ and hence morphisms in $\ck$ are the same as morphisms in $U_*\ck$. See for instance \cite[Section~6]{EK}.
	
	It is now easy to see that the following properties hold whenever $\cv$ and $\cw$ are equipped with factorization systems $(\ce_\cv,\cm_\cv)$ and $(\ce_\cw,\cm_\cw)$ respectively: \begin{enumerate}
		\item if $U(\ce_\cv)\subseteq\ce_\cw$ and $U(\cm_\cv)\subseteq\cm_\cw$, then $\ce_\cv$-purity with respect to $g$ in $\ck$ implies $\ce_\cw$-purity with respect to $g$ in $U_*\ck$;
		\item if $\ce_\cv\supseteq U^{-1}(\ce_\cw)$ and $\cm_\cv\supseteq U^{-1}(\cm_\cw)$, then  $\ce_\cw$-purity with respect to $g$ in $U_*\ck$ implies $\ce_\cv$-purity with respect to $g$ in $\ck$;
		\item if $\ce_\cv=U^{-1}\ce_\cw$ and $\cm_\cv=U^{-1}\cm_\cw$, then $\ce_\cv$-purity with respect to $g$ in $\ck$ is equivalent to $\ce_\cw$-purity with respect to $g$ in $U_*\ck$.
	\end{enumerate}
	This follows directly from the definition of $(\lambda,\ce)$-purity (since $U$ preserves pullbacks) by applying $U$ to the diagram of Notation~\ref{strong-pure}. We leave the details to the reader as we will not makes use this result in any of our Theorems.\\
	Finally, since $U$ preserves $\lambda$-filtered colimits, the $\lambda$-presentable objects of $\ck$ and $U_*\ck$ coincide; thus, under the hypothesis of (3) above, a morphism is $(\lambda,\ce_\cv)$-pure in $\ck$ if and only if it is $(\lambda,\ce_\cw)$-pure in $U_*\ck$.
\end{rem}

\begin{exam}\label{Ban1}
	A direct application of the result above is obtained by considering $\Ban$ as a $\CMet$-category (see \cite[Example~4.5(3)]{AR1}); the induced $\CMet$-functor $U$ is the unit-ball functor
	$U\colon\Ban \to \Met$.
 	Moreover, the (dense, isometry) factorization system on $\Ban$ can be obtained, as in (3) above, by taking the preimage of the factorization system (dense, isometry) in $\CMet$.
	
	It follows that a morphism $f$ in a $\Ban$-category $\ck$ is $(\lambda,\tx{dense})$-pure if and only if it is so in the $\CMet$-category $U_*\ck$.
\end{exam}

\begin{rem}
	Following the nomenclature of \cite{LT22} for accessible $\cv$-categories, we know that an  accessibly embedded full subcategory of a locally presentable $\cv$-category is accessible if and only if it is closed under ordinary $\lambda$-pure subobjects for some $\lambda$ (see \cite[3.23]{LT22} and \cite[2.36]{AR}). The same holds with respect to conical accessibility.
	
	Let us consider now enriched purity. Under the hypothesis of Corollary~\ref{pure-Epure} and using Proposition~\ref{closure1} above, any accessibly embedded full subcategory of a locally presentable $\cv$-category that is closed under $(\lambda,\ce)$-pure subobjects, for some $\lambda$, is accessible. We do not know, however, whether the converse implication holds; that is, if every accessible and accessibly embedded $\cv$-category in a locally presentable $\cv$-category is closed under enriched pure subobjects.
\end{rem}

\section{A weaker notion}\label{enr-purity2}
The following notation is needed for our second notion of purity.
\begin{nota}\label{notation}
Let $\ck$ be a $\cv$-category and $(\ce,\cm)$ a proper factorization system  in $\cv$. Consider morphisms $f\colon K\to L$ and $g\colon A\to B$ in $\ck$.  Let $\cq(g,f)$ be the $(\ce,\cm)$ factorization 
\begin{center}
	\begin{tikzpicture}[baseline=(current  bounding  box.south), scale=2]
		
		\node (21) at (0,0) {$\ck^\to(g,f)$};
		\node (22) at (1.2,0) {$\cq(g,f)$};
		\node (23) at (2.4,0) {$\ck(A,K)$};
		
		\path[font=\scriptsize]
		
		(21) edge [->>] node [above] {$p'$} (22)
		(22) edge [>->] node [above] {$p''$} (23);
	\end{tikzpicture}	
\end{center} 
of the projection $p:\ck^\to(g,f)\to \ck(A,K)$ sending $(u,v)$ to $u$. 
Let
$$
q':\ck(B,K)\to\ck^\to(g,f)
$$
be induced by the universal property of the pullback defining $\ck^\to(g,f)$ applied to $\ck(g,K)$ and $\ck(B,f)$. Explicitly, this sends $t:B\to K$ to $(tg,ft):g\to f$. Let $q=p'q':\ck(B,K)\to\cq(g,f)$ so that we have the commutative diagram below.
\begin{center}
	\begin{tikzpicture}[baseline=(current  bounding  box.south), scale=2]
		
		\node (a0) at (0,1.2) {$\ck^\to(g,f)$};
		\node (a'0) at (-1.5,1.5) {$\ck(B,K)$};
		\node (b0) at (1.5,1.2) {$\ck(B,L)$};
		\node (c0) at (0,-0.2) {$\ck(A,K)$};
		\node (d0) at (1.5,-0.2) {$\ck(A,L)$};
		\node (b'0) at (-0.5,0.5) {$\cq(g,f)$};
		
		\path[font=\scriptsize]
		
		(a0) edge [<-] node [above] {$q'$} (a'0)
		(a'0) edge [bend left,->] node [above] {$\ck(B,f)$} (b0)
		(a'0) edge [bend right,->] node [left] {$\ck(g,K)$} (c0)
		(a0) edge [->] node [right] {$p$} (c0)
		(a0) edge [->] node [left] {} (b0)
		(b0) edge [->] node [right] {$\ \ \ck(g,L)$} (d0)
		(c0) edge [->] node [below] {$\ck(A,f)$} (d0)
		(a0) edge [->>] node [left] {$p'\ $} (b'0)
		(a'0) edge [dashed, ->] node [left] {$q\ $} (b'0)
		(b'0) edge [>->] node [left] {$p''\ $} (c0);
	\end{tikzpicture}	
\end{center}
\end{nota}

\begin{defi}\label{barely-pure-def}
We will say that $f\colon K\to L$ is \textit{barely} $\ce$-\textit{pure with respect to $g$} if the map $q$ above is in $\ce$. We say that $f$ is \textit{barely}  $(\lambda,\ce)$-\textit{pure} if it is is barely $(\lambda,\ce)$-pure with respect to every $g:A\to B$ with $A$ and $B$ $\lambda$-presentable.
\end{defi}

\begin{exam}\label{ex2}
When $\cv=\CMet$ and we take the factorization system (dense, isometry) (see 
\cite[3.16(2)]{AR1}). Then barely $(\lambda,\ce)$-pure morphisms coincide with the barely $\lambda$-pure morphisms from \cite{RT}. Indeed, $\cq(f,g)$ consists of those $u:A\to K$ such that for every $\eps>0$ there are $u':A\to K$ and $v:B\to L$ such that $u\sim_{\eps}u'$ and $fu'=vg$. Clearly, if $f$ is $(\lambda,\ce)$-pure then it is barely $\lambda$-ap-pure. Conversely, let $f$ be barely $\lambda$-ap-pure and consider $u\in\cq(f,g)$ and $\eps>0$. There are $u':A\to K$ and $v:B\to L$ such that $u\sim_{\frac{\eps}{2}} u'$ and $fu'=vg$. There is $t:B\to A$ such that $tg\sim_{\frac{\eps}{2}}u'$. Hence $tg\sim_{\eps}u$.
\end{exam}

\begin{lemma}\label{pure-bpure}
	If $(\ce,\cm)$ is proper, every $(\lambda,\ce)$-pure morphism is barely $(\lambda,\ce)$-pure. Furthermore, if $\ce$ is stable under pullbacks in $\cv$, then $(\lambda,\ce)$-pure and barely $(\lambda,\ce)$-pure morphisms coincide.
\end{lemma}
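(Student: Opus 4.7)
The plan is to compare the two constructions via pullback pasting. Writing the $(\ce,\cm)$-factorization of $\ck(g,L)$ as $e\colon\ck(B,L)\twoheadrightarrow\cp(g,L)$ followed by $m\colon\cp(g,L)\rightarrowtail\ck(A,L)$, the pullback square defining $\ck^\to(g,f)$ (over $\ck(g,L)$ and $\ck(A,f)$) decomposes via the pasting lemma into two pullback squares stacked vertically, with $\cp(g,f)$ as the intermediate object:
$$
\begin{array}{ccc}
\ck^\to(g,f) & \longrightarrow & \ck(B,L) \\
\pi\downarrow\phantom{\pi} & & \phantom{e}\downarrow e \\
\cp(g,f) & \longrightarrow & \cp(g,L) \\
p''\downarrow\phantom{p''} & & \phantom{m}\downarrow m \\
\ck(A,K) & \longrightarrow & \ck(A,L)
\end{array}
$$
In particular $\pi$ is a pullback of $e$, and the projection $p\colon\ck^\to(g,f)\to\ck(A,K)$ satisfies $p=p''\circ\pi$. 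Checking projections to $\ck(A,K)$ and to $\cp(g,L)$, the universal property of the pullback $\cp(g,f)$ then forces $r=\pi\circ q'$, where $q'$ and $r$ are as in Notations~\ref{notation} and~\ref{strong-pure}.

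Next I would $(\ce,\cm)$-factor $\pi$ as $\pi=m_\pi\circ e_\pi$. Since $p''\in\cm$ and $\cm$ is closed under composition, $p=(p''\circ m_\pi)\circ e_\pi$ is the $(\ce,\cm)$-factorization of $p$, so $\cq(g,f)$ is its intermediate object and the map $p'$ of Notation~\ref{notation} coincides with $e_\pi$. Combining with the identification above,
$$ q = p'\circ q' = e_\pi\circ q' \qquad \text{and} \qquad r=\pi\circ q' = m_\pi\circ q. $$

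For the first assertion, suppose $f$ is $(\lambda,\ce)$-pure, i.e.\ $r\in\ce$. The property of proper factorization systems recalled in the Background section --- namely that $fg\in\ce$ forces $f\in\ce$ --- applied to $r=m_\pi\circ q$ yields $m_\pi\in\ce$. Combined with $m_\pi\in\cm$, the usual uniqueness-of-factorizations argument makes $m_\pi$ an isomorphism, so $q\cong r$ lies in $\ce$ and $f$ is barely $(\lambda,\ce)$-pure. For the second assertion, the additional hypothesis that $\ce$ is pullback-stable gives $\pi\in\ce$ directly (as a pullback of $e$), so $m_\pi$ is again forced to be an isomorphism and $q\cong r$ holds without any assumption on $f$; hence the two notions of purity coincide.

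The key step is the pullback pasting decomposition together with the identification $r=\pi\circ q'$; once this is in place the rest is a direct application of the definition of a proper factorization system. I do not expect any other real obstacle.
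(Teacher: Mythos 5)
Your proof is correct and follows essentially the same route as the paper's: both hinge on producing a comparison map in $\cm$ from $\cq(g,f)$ to $\cp(g,f)$ (your $m_\pi$, the paper's $s$) satisfying $r=m_\pi\circ q$, then invoking properness to force it into $\ce$ (hence an isomorphism) for the first claim, and pullback-stability of $\ce$ applied to $\pi=t$ for the second. The only cosmetic difference is that you obtain this comparison map by pasting pullbacks and factoring $\pi$, whereas the paper gets it as the orthogonal diagonal filler of $p'$ against the $\cm$-leg of $\cp(g,f)$; also note your label $p''$ for the map $\cp(g,f)\rightarrowtail\ck(A,K)$ clashes with the paper's use of $p''$ for $\cq(g,f)\rightarrowtail\ck(A,K)$, which is worth renaming.
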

\begin{proof}
	Let $f\colon K\to L$ be a $(\lambda,\ce)$-pure morphism in $\ck$ and $g\colon A\to B$ a morphism in $\ck_\lambda$. In the notation of \ref{notation}, the universal property of the pullback defining $\cp(g,f)$ induces a map $t\colon\ck^\to(g,f)\to\cp(g,f)$ making the square below commute
	\begin{center}
		\begin{tikzpicture}[baseline=(current  bounding  box.south), scale=2]
			
			\node (a0) at (0,0.9) {$\ck^\to(g,f)$};
			\node (b0) at (1.3,0.9) {$\cq(g,f)$};
			\node (c0) at (0,0) {$\cp(g,f)$};
			\node (d0) at (1.3,0) {$\ck(A,K)$};
			
			\path[font=\scriptsize]

			(a0) edge [->>] node [above] {$p'$} (b0)
			(b0) edge [>->] node [right] {} (d0)
			(b0) edge [dashed,>->] node [below] {$\ \ s$} (c0)
			(c0) edge [>->] node [below] {} (d0)
			(a0) edge [->] node [left] {$t$} (c0);
		\end{tikzpicture}	
	\end{center}
	and, since $p'\in\ce$ there exists $s$ in $\cm$ as dashed above making the triangles commute. By pre-composition with $q'$, we obtain that $sq=r$, and $r$ is in $\ce$ by $(\lambda,\ce)$-purity of $f$. Thus, if $(\ce,\cm)$ is proper, then $s$ is also in $\ce$ and hence is an isomorphism. Therefore $q$ is also in $\ce$ and $f$ is barely $(\lambda,\ce)$-pure.
	
	Assume now that $\ce$ is stable under pullbacks in $\cv$. The map $t$ can be seen as the pullback of the morphism $\ck(B,L)\twoheadrightarrow\cp(g,L)$ along $\cp(g,f)\to\cp(g,L)$ from Notation~\ref{strong-pure}; thus $t$ is in $\ce$ by hypothesis. The uniqueness of the $(\ce,\cm)$-factorization then implies that $s\colon\cq(g,f)\to\cp(g,f)$ above is an isomorphism. Thus $f$ is $(\lambda,\ce)$-pure if and only if it is barely $(\lambda,\ce)$-pure.
\end{proof}

\begin{rem}
	In $\Met$ and $\CMet$, dense maps are not stable under pullbacks. Indeed,
	consider the pullback
	\begin{center}
		\begin{tikzpicture}[baseline=(current  bounding  box.south), scale=2]
			
			\node (a'0) at (0,0.8) {$\emptyset$};
			\node (b0) at (1.4,0.8) {$\mathbb N$};
			\node (c0) at (0,0) {$\{0\}$};
			\node (d0) at (1.4,0) {$\{\frac{1}{n}|n\in \Bbb N\}\cup\{0\}$};
			
			\path[font=\scriptsize]

			(a'0) edge [->] node [above] {} (b0)
			(b0) edge [->] node [right] {$g$} (d0)
			(c0) edge [->] node [below] {} (d0)
			(a'0) edge [->] node [left] {} (c0);
		\end{tikzpicture}	
	\end{center}
	where $g$ is from \ref{ex1}(3) and is dense. But $\emptyset\to\{0\}$ is not dense.
	
	Nevertheless, $(\lambda,\ce)$-pure and barely $(\lambda,\ce)$-pure are equivalent notions by \ref{equ-purities}.
\end{rem}

\begin{propo}\label{shift}
If $\ce\subseteq\ce'$ and $(\ce',\cm')$ is proper, then every barely $(\lambda,\ce)$-pure morphism is barely $(\lambda,\ce')$-pure.
\end{propo}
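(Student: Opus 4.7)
The plan is to compare the $(\ce,\cm)$- and $(\ce',\cm')$-factorizations of the projection $p\colon\ck^\to(g,f)\to\ck(A,K)$ for an arbitrary $g\colon A\to B$ between $\lambda$-presentable objects, and to read off both variants of bare purity from a single diagram. Let $p=p''p'$ be the $(\ce,\cm)$-factorization used to build $\cq(g,f)$ in Notation~\ref{notation}, so that $q=p'q'$ lies in $\ce$ by the assumption that $f$ is barely $(\lambda,\ce)$-pure. Write $q_{\ce'}$ for the analogous map produced by the $(\ce',\cm')$-factorization of $p$; the goal is to show that $q_{\ce'}\in\ce'$.

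The key step is to further factor the $\cm$-part $p''$ via $(\ce',\cm')$ as $p''=m\,e$ with $e\in\ce'$ and $m\in\cm'$. Then $p=m\,(ep')$, and since $p'\in\ce\subseteq\ce'$ while $e\in\ce'$, the composite $ep'$ lies in $\ce'$ because the left class of a factorization system is closed under composition. By the essential uniqueness of $(\ce',\cm')$-factorizations, one may identify $\cq'(g,f)$ with the codomain of $ep'$, and under this identification the $\ce'$-part of the factorization of $p$ becomes $ep'$ itself. Consequently
$$q_{\ce'}=(ep')q'=e(p'q')=eq,$$
exhibiting $q_{\ce'}$ as a composite of two morphisms in $\ce'$; hence $q_{\ce'}\in\ce'$. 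This gives barely $\ce'$-purity of $f$ with respect to $g$, and since $g$ was arbitrary among morphisms between $\lambda$-presentable objects, $f$ is barely $(\lambda,\ce')$-pure.

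I do not expect any real obstacle here: the argument is entirely formal, and the properness hypothesis on $(\ce',\cm')$ plays no active role beyond ensuring that Notation~\ref{notation} and Definition~\ref{barely-pure-def} apply to the second factorization system. The only mild point of care is to track the canonical isomorphism between the two presentations of the $(\ce',\cm')$-factorization of $p$ (the one built from $p=m(ep')$ and the abstract one defining $\cq'(g,f)$), so as to legitimately write $q_{\ce'}=eq$ rather than merely an equality up to a uniquely determined isomorphism.
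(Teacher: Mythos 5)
Your proof is correct, and it reaches the same structural conclusion as the paper's --- namely that $q_{\ce'}$ factors as $q_{\ce'}=(\text{map in }\ce')\circ q_{\ce}$ with $q_\ce\in\ce\subseteq\ce'$ --- but it justifies the key step differently. The paper obtains the comparison map $t\colon\cq_\ce(g,f)\to\cq_{\ce'}(g,f)$ as the orthogonality diagonal in the square formed by $p'_\ce\in\ce\subseteq\ce'$ and $p''_{\ce'}\in\cm'$, and then invokes \emph{properness} of $(\ce',\cm')$ (left-cancellation: $tp'_\ce=p'_{\ce'}\in\ce'$ forces $t\in\ce'$). You instead re-factorize the $\cm$-part $p''$ through $(\ce',\cm')$ as $p''=me$ and observe that $p=m(ep')$ is already an $(\ce',\cm')$-factorization, so the connecting map is $e$ composed with a canonical isomorphism and is in $\ce'$ by construction; by uniqueness of the orthogonality diagonal your $\phi e$ is in fact the paper's $t$. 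What your route buys is that the properness hypothesis on $(\ce',\cm')$ is genuinely not needed for the argument (only, as you note, for Notation~\ref{notation} to be formally applicable, and even there it plays no mathematical role); what the paper's route buys is a slightly shorter diagram chase that never mentions the intermediate object of the second factorization of $p''$. Your care about the canonical isomorphism identifying the two presentations of the $(\ce',\cm')$-factorization is exactly the right point to flag, and it is harmless since isomorphisms lie in $\ce'$.
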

\begin{proof}
We will follow Notation~\ref{notation} with indices denoting the factorization system to which they apply.
If $\ce\subseteq\ce'$ then $\cm'\subseteq\cm$ and thus there exists
$t:\cq_{\ce}(g,f)\to\cq_{\ce'}(g,f)$ such that $tp'_{\ce}=p'_{\ce'}$.
By properness of the factorization system, $t\in\ce'$ and thus
$$
q_{\ce'}=p'_{\ce'}q'=tp'_{\ce}q'=tq_{\ce}
$$
belongs to $\ce'$ (note that $q'$ does not depend on $\ce$ or $\ce'$).
\end{proof}

\begin{rem}
	The property above is one that doesn't seem to hold for the notion of purity considered in the previous section. Nonetheless, it will be satisfied whenever the two notions of purity coincide.
\end{rem}

Next we can prove some results in the same spirit as those of the previous section.

\begin{propo}\label{cancell}
If $(\ce,\cm)$ is proper then barely $(\lambda,\ce)$-pure morphisms are left-cancellable; that is, if $f_2f_1$ is barely $(\lambda,\ce)$-pure then $f_1$ is barely $(\lambda,\ce)$-pure.
\end{propo}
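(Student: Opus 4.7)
The plan is to mimic the argument of Proposition~\ref{cancell1}, substituting the objects $\cq(g,-)$ from Notation~\ref{notation} for the objects $\cp(g,-)$ from Notation~\ref{strong-pure}. Writing $f=f_2f_1$ with $f_1\colon K\to L$ and $f_2\colon L\to M$, and fixing a morphism $g\colon A\to B$ between $\lambda$-presentable objects, the goal is to show that the map $q_{f_1}\colon\ck(B,K)\to\cq(g,f_1)$ lies in $\ce$.

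The first step is to construct a comparison map $s\colon\cq(g,f_1)\to\cq(g,f)$. Post-composition by $f_2$ defines a natural morphism $n\colon\ck^\to(g,f_1)\to\ck^\to(g,f)$, sending a square $(u,v)\colon g\to f_1$ to $(u,f_2v)\colon g\to f$, and this commutes with the projections to $\ck(A,K)$. Hence the outer rectangle formed by $(\ce,\cm)$-factoring both projections commutes, and orthogonality of the factorization system produces a unique diagonal $s$ satisfying $s\circ p'_{f_1}=p'_f\circ n$ and $p''_f\circ s=p''_{f_1}$. Using the explicit formula $q'(t)=(tg,f_it)$ one checks immediately that $n\circ q'_{f_1}=q'_f$, whence
\[ s\circ q_{f_1}=s\circ p'_{f_1}\circ q'_{f_1}=p'_f\circ n\circ q'_{f_1}=p'_f\circ q'_f=q_f. \]

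The remainder is a two-cancellation argument internal to the factorization system. Since $q_f\in\ce$ by hypothesis, applying properness to the equation $s\circ q_{f_1}=q_f$ yields $s\in\ce$. On the other hand, $p''_f\circ s=p''_{f_1}$ with both $p''_f$ and $p''_{f_1}$ in $\cm$; the standard left-cancellation property of $\cm$ (which in a proper factorization system follows from the fact that elements of $\cm$ are monomorphisms) then forces $s\in\cm$. Hence $s\in\ce\cap\cm$ is an isomorphism, so $q_{f_1}=s^{-1}\circ q_f$ lies in $\ce$, as required.

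I do not anticipate any real obstacle: the proof closely parallels that of Proposition~\ref{cancell1}. The only slightly new ingredient is the verification that $n\circ q'_{f_1}=q'_f$, which is routine from the explicit description of the maps $q'$ given in Notation~\ref{notation}. As in Proposition~\ref{cancell1}, properness of $(\ce,\cm)$ is essential: without it one could not deduce $s\in\ce$ from $s\circ q_{f_1}\in\ce$, nor could one conclude that $s\in\cm$ from the factorization $p''_{f_1}=p''_f\circ s$.
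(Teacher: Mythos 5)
Your proposal is correct and follows essentially the same route as the paper's proof: the map you call $n$ is exactly $\ck^\to(g,(\id_K,f_2))$, the induced comparison $s\colon\cq(g,f_1)\to\cq(g,f)$ satisfies $sq_{f_1}=q_f$, and the conclusion via $s\in\ce\cap\cm$ is the paper's argument verbatim. No gaps.
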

\begin{proof}
We follow Notation~\ref{notation}. Let $f_2f_1$ be $(\lambda,\ce)$-pure and consider the factorizations
$$
\ck^\to(g,f_i) \xrightarrow{\ p'_i\ }\cq(g,f_i) \xrightarrow{\ p''_i\ } \ck(A,K),
$$ 
for $i=1,2$, and
$$
\ck^\to(g,f) \xrightarrow{\ p'\ }\cq(g,f) \xrightarrow{\ p''\ } \ck(A,K)
$$ 
where $f=f_2f_1$. Since $(\id_K,f_2):f_1\to f$ defines a morphism in $\ck^\to$, we have a map
$$
\ck^\to(g,(\id_K,f_2))\colon\ck^\to(g,f_1)\to\ck^\to(g,f)
$$
for which $p \ck^\to(g,(\id_K,f_2))=p_1$. Thus there is an induced $s\colon\cq(g,f_1)\to\cq(g,f)$ such that 
$$
sp'_1=p'\ck^\to(g,(\id_K,f_2))
$$ 
and $p''s=p''_1$. Moreover, $sq_1=q$ and thus $s$ is in $\ce$. Since it is also in $\cm$, $s$ is an isomorphism. Hence $q_1$ is in $\ce$ and thus $f_1$ 
is $(\lambda,\ce)$-pure.
\end{proof}

\begin{propo}\label{filt-col-pure}
	If $\cm$ is closed under $\lambda$-filtered colimits in $\cv^\to$, then any $\lambda$-filtered colimit of barely $(\lambda,\ce)$-pure morphisms in $\ck^\to$ is barely $(\lambda,\ce)$-pure.
\end{propo}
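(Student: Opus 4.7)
The plan is to mirror the argument of Proposition~\ref{filt-col-pure1}. I would fix a morphism $g \colon A \to B$ between $\lambda$-presentable objects and try to exhibit the whole of Notation~\ref{notation} for $f = \colim_i f_i$ as a $\lambda$-filtered colimit of the corresponding diagrams for the $f_i$. Because $A$ and $B$ are $\lambda$-presentable, each of $\ck(A,K)$, $\ck(B,K)$, $\ck(A,L)$, $\ck(B,L)$ arises as the $\lambda$-filtered colimit of the analogous homs of the $K_i$ and $L_i$. Since $\lambda$-filtered colimits commute with finite limits in $\cv$, it follows that $\ck^\to(g,f) \cong \colim_i \ck^\to(g,f_i)$, with $p = \colim_i p_i$ and $q' = \colim_i q'_i$ (the latter because $q'$ is defined through a universal property that is stable under such colimits).

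The key step will be to identify $\cq(g,f)$ with $\colim_i \cq(g,f_i)$. Each $p_i = p''_i p'_i$ is an $(\ce,\cm)$-factorization; taking $\lambda$-filtered colimits in $\cv^\to$ produces a factorization of $p$ as $(\colim_i p''_i)(\colim_i p'_i)$. The left factor lies in $\ce$ because $\ce$, being the left class of a factorization system, is closed under arbitrary colimits in $\cv^\to$; the right factor lies in $\cm$ by the standing hypothesis. Uniqueness of the $(\ce,\cm)$-factorization then forces $\cq(g,f) \cong \colim_i \cq(g,f_i)$ and $p' = \colim_i p'_i$.

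Putting these together, $q = p'q'$ is the $\lambda$-filtered colimit of the maps $q_i = p'_i q'_i$, each of which belongs to $\ce$ by the barely $(\lambda,\ce)$-purity of the $f_i$. One more application of the colimit-closure of $\ce$ in $\cv^\to$ gives $q \in \ce$, and since $g$ was arbitrary, $f$ is barely $(\lambda,\ce)$-pure. The main (if minor) obstacle is verifying that the pointwise factorization assembled from the $p'_i$ and $p''_i$ is genuinely an $(\ce,\cm)$-factorization of $p$; this is precisely the place where the assumed closure of $\cm$ under $\lambda$-filtered colimits is needed, exactly as in the proof of Proposition~\ref{filt-col-pure1}.
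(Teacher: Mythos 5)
Your proposal is correct and follows essentially the same route as the paper's proof: identify $\ck^\to(g,f)$ and $\cq(g,f)$ as $\lambda$-filtered colimits of the corresponding objects for the $f_i$ (using closure of $\ce$ under all colimits and of $\cm$ under $\lambda$-filtered colimits in $\cv^\to$, plus uniqueness of $(\ce,\cm)$-factorizations), and then conclude that $q=\colim_i q_i$ lies in $\ce$. The paper states the identification $\cq(g,f)\cong\colim_i\cq(g,f_i)$ more tersely, but the justification you supply is exactly the intended one.
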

\begin{proof}
	Let $f\colon K\to L$ be a $\lambda$-filtered colimit in $\ck^\to$ of some barely $(\lambda,\ce)$-pure morphisms $f_i\colon K_i\to L_i$, and consider any $g\colon A\to B$ in $\ck_\lambda$. Since $A\in\ck_\lambda$ and $g$ is a $\lambda$-presentable object of $\ck^\to$, the projection $p\colon \ck^\to(g,f)\to \ck(A,K_i)$ is the $\lambda$-filtered colimit of the projections $p_i\colon \ck^\to(g,f_i)\to \ck(A,K)$. Now, we obtain an induced $\lambda$-filtered diagram on the $\cq(g,f_i)$ whose colimit coincides with $\cq(g,f)$, since $\ce$ and $\cm$ are closed under $\lambda$-filtered colimits in $\ck^\to$. It follows that the map $q\colon \ck(B,K)\to \cq(g,f)$ is the colimit of the maps $q_i\colon \ck(B,K_i)\to \cq(g,f_i)$, which lie in $\ce$ since every $f_i$ is $(\lambda,\ce)$-pure. Thus $q$ also lies in $\ce$ and $f$ is barely $(\lambda,\ce)$-pure.
\end{proof}

Next we study preservation of bare $(\lambda,\ce)$-purity by $\cv$-functors:

\begin{propo}\label{bare-rightadj}
	Any right adjoint $\cv$-functor $F\colon\ck\to\cl$ preserving $\lambda$-filtered colimits sends barely $(\lambda,\ce)$-pure morphisms in $\ck$ to barely $(\lambda,\ce)$-pure morphisms in $\cl$.
\end{propo}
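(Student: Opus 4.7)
The plan is to follow the strategy of Proposition~\ref{right-adj}, transferring barely $(\lambda,\ce)$-purity from $\ck$ to $\cl$ via the adjunction. Let $L\colon\cl\to\ck$ denote the left adjoint of $F$; since $F$ preserves $\lambda$-filtered colimits, $L$ preserves $\lambda$-presentable objects. Given a barely $(\lambda,\ce)$-pure morphism $f\colon K\to L$ in $\ck$ and a morphism $g\colon A\to B$ in $\cl$ between $\lambda$-presentable objects, it suffices to show that $Ff$ is barely $\ce$-pure with respect to $g$; since $Lg\colon LA\to LB$ is then a morphism between $\lambda$-presentable objects of $\ck$, the barely $(\lambda,\ce)$-purity of $f$ will provide the required information.

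The key observation is that the $\cv$-natural isomorphisms $\cl(X,FY)\cong\ck(LX,Y)$ supplied by the adjunction induce isomorphisms in $\cv$
$$
\cl^\to(g,Ff)\cong\ck^\to(Lg,f),\qquad \cl(A,FK)\cong\ck(LA,K),\qquad \cl(B,FK)\cong\ck(LB,K),
$$
that are compatible with the maps appearing in Notation~\ref{notation}. Indeed, the projection $p\colon\cl^\to(g,Ff)\to\cl(A,FK)$ corresponds under these identifications to the projection $p\colon\ck^\to(Lg,f)\to\ck(LA,K)$, and the comparison map $q'$ for $(g,Ff)$ in $\cl$ corresponds to the comparison map $q'$ for $(Lg,f)$ in $\ck$. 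Since the $(\ce,\cm)$-factorization is functorial in $\cv$, it then follows that $\cq(g,Ff)\cong \cq(Lg,f)$ and that
$$
q_{g,Ff}\cong q_{Lg,f}
$$
in $\cv^\to$, where the indices indicate to which pair of morphisms Notation~\ref{notation} applies.

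Since $f$ is barely $(\lambda,\ce)$-pure and $Lg$ has $\lambda$-presentable domain and codomain in $\ck$, the map $q_{Lg,f}$ belongs to $\ce$; hence so does $q_{g,Ff}$, proving that $Ff$ is barely $\ce$-pure with respect to $g$. As $g$ was arbitrary, $Ff$ is barely $(\lambda,\ce)$-pure, as required. The only real work involved in carrying out this plan is the bookkeeping needed to check that the hom-isomorphism of the adjunction is compatible with each of the arrows appearing in Notation~\ref{notation}; this is a routine consequence of $\cv$-naturality, so no genuine obstacle arises.
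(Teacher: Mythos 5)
Your proof is correct and takes exactly the approach the paper intends: the paper's own proof of this proposition just says it is analogous to Proposition~\ref{right-adj}, and you have carried out precisely that analogy, replacing $\cp(g,-)$ and $r$ from Notation~\ref{strong-pure} with $\cq(g,-)$ and $q$ from Notation~\ref{notation} and using the $\cv$-naturality of the adjunction isomorphisms to identify the two factorizations.
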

\begin{proof}
	The proof is analogue to that of Proposition~\ref{right-adj}.
\end{proof}

As a corollary we obtain that homming out of $\lambda$-presentable objects, as well as taking powers by them, preserves bare $(\lambda,\ce)$-purity:

\begin{coro}\label{representablypure}
	Let $f\colon K\to L$ be a barely $(\lambda,\ce)$-pure morphism in a $\cv$-category $\ck$ with copowers; then:\begin{enumerate}
		\item $\ck(C,f)$ is barely $(\lambda,\ce)$-pure in $\cv$ for any $C\in\ck_\lambda$;
		\item if $\ck$ also has powers, $C\pitchfork f$ is barely $(\lambda,\ce)$-pure in $\ck$ for any $C\in\cv_\lambda$.
	\end{enumerate}
\end{coro}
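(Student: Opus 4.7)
The plan is to reduce both claims directly to Proposition~\ref{bare-rightadj}, in exact analogy with how Corollary~\ref{representablypure1} was derived from Proposition~\ref{right-adj}. The key observations are the standard adjunctions associated with powers and copowers in an enriched category, together with the fact that homming out of (resp.\ powering by) a $\lambda$-presentable object preserves $\lambda$-filtered colimits.

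For (1), I would invoke the $\cv$-adjunction
$(-)\cdot C\dashv\ck(C,-)$,
where the right adjoint is the representable $\cv$-functor $\ck(C,-)\colon\ck\to\cv$. Since $C\in\ck_\lambda$, this right adjoint preserves $\lambda$-filtered colimits by the very definition of $\lambda$-presentability in the enriched sense. Proposition~\ref{bare-rightadj} then applies and yields that $\ck(C,f)$ is barely $(\lambda,\ce)$-pure in $\cv$.

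For (2), the relevant adjunction is
$C\cdot(-)\dashv C\pitchfork(-)\colon\ck\to\ck$.
The right adjoint $C\pitchfork(-)$ preserves $\lambda$-filtered colimits because $C\in\cv_\lambda$: for any $\lambda$-filtered diagram $H$ in $\ck$ and any $L\in\ck$, one has natural isomorphisms $\ck(L,C\pitchfork\colim H)\cong[C,\ck(L,\colim H)]\cong[C,\colim\ck(L,H-)]\cong\colim[C,\ck(L,H-)]\cong\colim\ck(L,C\pitchfork H-)$, using that $\lambda$-filtered colimits in $\cv$ commute with $[C,-]$ by $\lambda$-presentability of $C$. Applying Proposition~\ref{bare-rightadj} once more produces the desired bare $(\lambda,\ce)$-purity of $C\pitchfork f$.

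I do not anticipate any real obstacle: the whole argument is structural and no auxiliary computation with the diagram of Notation~\ref{notation} is needed, since Proposition~\ref{bare-rightadj} already encapsulates the bookkeeping in passing $q$-maps across a $\cv$-adjunction. The only point requiring a brief verification is the preservation of $\lambda$-filtered colimits by $C\pitchfork(-)$ in (2), which, as indicated, follows formally from $\lambda$-presentability of $C$ in $\cv$.
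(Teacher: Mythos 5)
Your proposal is correct and is essentially identical to the paper's own argument: both reduce (1) and (2) to Proposition~\ref{bare-rightadj} via the adjunctions $(-)\cdot C\dashv\ck(C,-)$ and $C\cdot(-)\dashv C\pitchfork(-)$, noting that the right adjoints preserve $\lambda$-filtered colimits by $\lambda$-presentability of $C$. The explicit verification that $C\pitchfork(-)$ preserves $\lambda$-filtered colimits is a welcome addition, but the paper simply asserts it, so there is nothing substantive to compare.
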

\begin{proof}
	Same arguments of Corollary~\ref{representablypure1}, now relying on Proposition~\ref{bare-rightadj} above.
\end{proof}

In the statement below we adopt the same notation of Proposition~\ref{good1}.

\begin{propo}\label{good}
	Assume that $\ce_\lambda^\perp=\cm$ and that $[e,X]\in\cm$ whenever $e\in\ce$ and $X\in\cv$. Then for any barely $(\lambda,\ce)$-pure $f\colon K\to L$ in a locally $\lambda$-presentable $\cv$-category $\ck$, we have that $\ck(C,f)\in\cm$ for any $C\in\ck$.
\end{propo}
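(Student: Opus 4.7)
The plan mirrors the proof of Proposition~\ref{good1}, substituting $\cq(e,f)$ for $\cp(e,f)$ at the critical step. Since $\cm$ is closed under limits in $\cv^\to$ and $\ck(-,f)$ turns $\lambda$-filtered colimits into limits, writing a general $C\in\ck$ as a $\lambda$-filtered colimit of objects in $\ck_\lambda$ reduces the statement to the case $C\in\ck_\lambda$. By Corollary~\ref{representablypure}, $\ck(C,f)$ is then barely $(\lambda,\ce)$-pure in $\cv$, so it suffices to prove that any barely $(\lambda,\ce)$-pure morphism $f\colon K\to L$ in $\cv$ itself lies in $\cm$. Since $\ce_\lambda^\perp=\cm$, this further reduces to showing that $f$ is right orthogonal to every $e\colon A\to B$ in $\ce_\lambda$, equivalently that the canonical map $q'\colon [B,K]\to \cv^\to(e,f)$ of Notation~\ref{notation} is an isomorphism.

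For the orthogonality I would exploit the hypothesis $[e,X]\in\cm$ twice. First, note that $\cv^\to(e,f)$ is the pullback of $[e,L]$ along $[A,f]$; since $[e,L]\in\cm$ by hypothesis and $\cm$ is stable under pullbacks, the projection $p\colon \cv^\to(e,f)\to [A,K]$ itself lies in $\cm$. Its $(\ce,\cm)$-factorization $p=p''\circ p'$ is therefore trivial, so $p'$ is an isomorphism and $\cq(e,f)\cong \cv^\to(e,f)$. Second, the composite $p''\circ q$ equals $p\circ q'=[e,K]$, which is in $\cm$ by hypothesis, while bare $(\lambda,\ce)$-purity of $f$ provides $q\in\ce$. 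Uniqueness of the $(\ce,\cm)$-factorization applied to $[e,K]\in\cm$ then forces its $\ce$-part $q$ to be an isomorphism. Combining $q=p'\circ q'$ with $q$ and $p'$ both isomorphisms yields that $q'$ is an isomorphism, as required.

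The only slightly delicate point, and the only place where both hypotheses on $(\ce,\cm)$ are genuinely used, is the observation that $p\in\cm$: it is precisely this that makes $\cq(e,f)$ collapse onto the pullback $\cv^\to(e,f)$ and so allows the weaker bare purity to substitute for the stronger notion. Once this collapse is recognised, everything else follows by uniqueness of $(\ce,\cm)$-factorizations, exactly as in Proposition~\ref{good1}.
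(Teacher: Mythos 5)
Your proof is correct and follows essentially the same route as the paper's: reduce to $C\in\ck_\lambda$ via closure of $\cm$ under limits, then to $f$ in $\cv$ via Corollary~\ref{representablypure}, and finally show $q$ and $p'$ are isomorphisms (hence $q'$ is) using the two hypotheses. The only cosmetic difference is that you invoke uniqueness of $(\ce,\cm)$-factorizations where the paper uses left-cancellability of $\cm$; these are interchangeable.
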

\begin{proof}
	Since $\cm$ is closed under limits in $\cv^\to$, it is enough to prove the result for $C\in\ck_\lambda$. Furthermore, by Corollary~\ref{representablypure} above, it is enough to show that any barely $(\lambda,\ce)$-pure morphism $f\colon K\to L$ in $\cv$ lies in $\cm$. 
	
	By the hypotheses on $\cm$, to prove that $f\in\cm$ it is enough to show that it is right orthogonal to every morphism in $\ce_\lambda$; for that it suffices to prove that the square below is a pullback.
	\begin{center}
		\begin{tikzpicture}[baseline=(current  bounding  box.south), scale=2]
			
			\node (a'0) at (0,0.9) {$[B,K]$};
			\node (b0) at (1.3,0.9) {$[B,L]$};
			\node (c0) at (0,0) {$[A,K]$};
			\node (d0) at (1.3,0) {$[A,L]$};
			
			\path[font=\scriptsize]

			(a'0) edge [->] node [above] {$[B,f]$} (b0)
			(b0) edge [->] node [right] {$[e,L]$} (d0)
			(c0) edge [->] node [below] {$[A,f]$} (d0)
			(a'0) edge [->] node [left] {$[e,K]$} (c0);
		\end{tikzpicture}	
	\end{center}
	Note that $[e,K]$ is in $\cm$ by hypothesis. Moreover $[e,K]=p''q$ with $q\in\ce$ and $p''\in\cm$. It follows that $q\in\cm$ and hence is an isomorphism. On the other hand, $p\colon \cv^\to(e,f)\to[A,K] $ is obtained by pulling back $[e,L]$ which, again by hypothesis, is in $\cm$. Therefore $p\in\cm$ as well, and $p'$ is then an isomorphism. Thus $q'\colon [B,K]\to \cv^\to(e,f)$ is an isomorphism as well. This shows that the square above is a pullback, and hence that $f\in\cm$.
\end{proof}

\begin{rem}
	If the factorization system $(\ce,\cm)$ is proper, then Proposition~\ref{good1} follows by the result above plus Lemma~\ref{pure-bpure}.
\end{rem}

\begin{defi}\label{e-split}
	We say that a morphism $s\colon K\to L$ in a $\cv$-category $\ck$ is an {\em $\ce$-split monomorphism} if the map $\ck(s,K)\colon \ck(L,K)\to\ck(K,K)$ is in $\ce$.
\end{defi}

\begin{rem}
	The notion of $\ce$-split monomorphism should not be confused with that of $\ce$-split epimorphism; this is a map $s$ for which $\ck(L,s)\colon \ck(L,K)\to\ck(L,L)$ is required to be in $\ce$.
\end{rem}

\begin{exam}$ $\begin{enumerate}
		\item If $\ce$ is the class of surjections in the sense of \ref{surjection1}
		then $\ce$-split monomorphisms are precisely the usual split monomorphisms.
		\item For $\Met$ (or $\CMet$) and $\ce=$ dense, $\ce$-split monomorphisms coincide with ap-split monomorphisms from \cite{RT}.
	\end{enumerate}
\end{exam}

\begin{rem}\label{relative}$ $\begin{enumerate}

	\item  An $\ce$-split morphism $s:K\to L$ is barely $\ce$-pure with respect to itself. Indeed, since $pq'=\ck(s,K)$, we get that $p''$ is in $\ce$, hence it is an isomorphism. Thus $q\in\ce$.

	\item  The proof of \ref{filt-col-pure} yields that, if $\cm$ is closed under $\lambda$-filtered colimits in $\cv^\to$, then any $\lambda$-filtered colimit of morphisms barely $\ce$-pure with respect to $g$ is barely $\ce$-pure with respect to $g$.

	\item  Assume that $f$ is barely $\ce$-pure with respect to $g$ and $g'\to g$ is a morphism in $\ck^\to$. We do not know whether $f$ is then barely $\ce$-pure with respect to $g'$. It is valid in all our examples and it implies that $\ce$-split morphisms are barely $\ce$-pure.
\end{enumerate}
\end{rem}

As in the previous section, next we talk about $\ce$-injectivity classes.

\begin{propo}\label{closure}
	If barely $(\lambda,\ce)$-pure and $(\lambda,\ce)$-pure morphisms coincide, then every $(\lambda,\ce)$-injectivity class is closed under barely $(\lambda,\ce)$-pure subobjects.
\end{propo}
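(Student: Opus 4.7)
The plan is essentially to observe that this is an immediate corollary of Proposition~\ref{closure1}(3) once the coincidence hypothesis is invoked. Specifically, let $\cl = \tx{Inj}_\ce \ch$ be a $(\lambda,\ce)$-injectivity class and suppose $f\colon K\to L$ is a barely $(\lambda,\ce)$-pure morphism with $L\in\cl$. By the hypothesis that barely $(\lambda,\ce)$-pure and $(\lambda,\ce)$-pure morphisms coincide, $f$ is in fact $(\lambda,\ce)$-pure. Proposition~\ref{closure1}(3) then applies directly to yield $K\in\cl$, which is exactly the closure statement we want.

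Since closure under $(\lambda,\ce)$-pure subobjects is already established in Proposition~\ref{closure1}(3), no further computation is needed: the entire content of the argument lies in the hypothesis, which lets us translate ``barely $(\lambda,\ce)$-pure'' into ``$(\lambda,\ce)$-pure'' before invoking the previous result. There is no real obstacle to overcome; the point of stating the proposition is to record this as a named fact that can be cited later in contexts (such as the setting of Lemma~\ref{pure-bpure} where $\ce$ is stable under pullbacks) in which the two notions are known to agree.
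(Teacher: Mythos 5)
Your argument is exactly the paper's: the paper's proof is the one-line ``Follows from Proposition~\ref{closure1}'', which is precisely your translation of ``barely $(\lambda,\ce)$-pure'' into ``$(\lambda,\ce)$-pure'' via the coincidence hypothesis followed by an appeal to Proposition~\ref{closure1}(3). The proposal is correct and requires no changes.
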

\begin{proof}
	Follows from Proposition~\ref{closure1}.
\end{proof}


\begin{exam}\label{Ban2}
In $\CMet$ with the (dense, isometry) factorization system, a morphism is $(\lambda,\ce)$-pure if and only if it is barely $(\lambda,\ce)$-pure (see \cite[Lemma~4.3]{R} or Proposition~\ref{equ-purities}). Therefore, by the remark above plus Example~\ref{Ban1}, it follows that also when enriching over $\Ban$, with the (dense, isometry) factorization, the notions of $(\lambda,\ce)$-purity and bare $(\lambda,\ce)$-purity coincide.
\end{exam}

\section{The characterization theorem}\label{E-is-inj}
In this section, we will characterize $(\lambda,\ce)$-injectivity classes.
To achieve this, we will need the following assumptions about $\ce$.

\begin{assume}\label{assumption}$ $
	\begin{itemize}
		\item $(\ce,\cm)$ is an enriched and proper factorization system;
		\item $\cm$ is closed under $\lambda$-filtered colimits in $\cv^\to$;
		\item $\ce$ can be written as an injectivity class in $\cv^\to$ with respect to maps $(!_Z,y)$ as below
		\begin{center}
			\begin{tikzpicture}[baseline=(current  bounding  box.south), scale=2]
				
				\node (a'0) at (0,0.8) {$0$};
				\node (b0) at (1.1,0.8) {$Z$};
				\node (c0) at (0,0) {$X$};
				\node (d0) at (1.1,0) {$Y$};
				
				\path[font=\scriptsize]

				(a'0) edge [->] node [above] {$!_Z$} (b0)
				(b0) edge [->] node [right] {$d$} (d0)
				(c0) edge [->] node [below] {$y$} (d0)
				(a'0) edge [->] node [left] {$!_X$} (c0);
			\end{tikzpicture}	
		\end{center}
		with $X\in\ck_\lambda$.
	\end{itemize}
\end{assume}

\begin{exam}\label{ex-ass}$ $
	{\setlength{\leftmargini}{1.6em}
	\begin{enumerate}
		\item Surjections in $\cv$ can be characterized as those maps which are injective with respect to
		\begin{center}
			\begin{tikzpicture}[baseline=(current  bounding  box.south), scale=2]
				
				\node (a'0) at (0,0.8) {$0$};
				\node (b0) at (1.1,0.8) {$I$};
				\node (c0) at (0,0) {$I$};
				\node (d0) at (1.1,0) {$I$};
				
				\path[font=\scriptsize]

				(a'0) edge [->] node [above] {$!$} (b0)
				(b0) edge [->] node [right] {$d=1$} (d0)
				(c0) edge [->] node [below] {$y=1$} (d0)
				(a'0) edge [->] node [left] {$!$} (c0);
			\end{tikzpicture}	
		\end{center}
		Indeed, for a morphism $f\colon A\to B$, to give a map $!\to f$ in $\cv^\to$ is the same as giving an element $b\in\cv_0(I,B)$; then such $b$ factorizes through the square above if and only if there exists $a\in\cv_0(I,A)$ with $fa=b$.
		\item Dense maps in $\CMet^\to$ and $\Met^\to$ can be characterized as those maps which are injective with respect to the family
		\begin{center}
			\begin{tikzpicture}[baseline=(current  bounding  box.south), scale=2]
				
				\node (a'0) at (0,0.8) {$0$};
				\node (b0) at (1.1,0.8) {$1$};
				\node (c0) at (0,0) {$1$};
				\node (d0) at (1.1,0) {$2_\epsilon$};
				
				\path[font=\scriptsize]

				(a'0) edge [->] node [above] {$!$} (b0)
				(b0) edge [->] node [right] {$d=i_1$} (d0)
				(c0) edge [->] node [below] {$y=i_0$} (d0)
				(a'0) edge [->] node [left] {$!$} (c0);
			\end{tikzpicture}	
		\end{center}
		for every $\epsilon>0$. Here $2_\epsilon$ is the metric space given by two points $\{0,1\}$ of distance $\epsilon$, and $i_0$ and $i_1$ are the two inclusions.\\
		Arguing as above. For any $f\colon A\to B$, a map $!\to f$ corresponds to an element $b\in B$, and such $b$ factorizes through the square above if and only if there exists $a\in A$ with $d(fa,b)<\epsilon$. Asking this for each $\epsilon>0$ is equivalent to requiring that $f$ is dense.
		
		\item Let $\Ban$ be the category of Banach spaces and linear maps of norm $\leq 1$. Dense maps in $\Ban^\to$ can be characterized as those maps which are injective with respect to the family
		\begin{center}
			\begin{tikzpicture}[baseline=(current  bounding  box.south), scale=2]
				
				\node (a'0) at (0,0.8) {$0$};
				\node (b0) at (1.1,0.8) {$\Bbb C$};
				\node (c0) at (0,0) {$\Bbb C$};
				\node (d0) at (1.1,0) {$F2_\epsilon$};
				
				\path[font=\scriptsize]

				(a'0) edge [->] node [above] {$!$} (b0)
				(b0) edge [->] node [right] {$d=Fi_1$} (d0)
				(c0) edge [->] node [below] {$y=Fi_0$} (d0)
				(a'0) edge [->] node [left] {$!$} (c0);
			\end{tikzpicture}	
		\end{center}
for every $\epsilon>0$. Here $F$ is the left adjoint to the unit ball
functor $U:\Ban\to\CMet$ (see \cite{R2}). This follows from the fact that
$f$ is dense in $\Ban$ if and only if $Uf$ is dense in $\CMet$. Recall that $F1=\Bbb C$.

\item Let $\omega$-$\CPO$ be the cartesian closed category of posets
with joins of non-empty $\omega$-chains and maps preserving joins of non-empty $\omega$-chains. A morphism $f:A\to B$ in  $\omega$-$\CPO$ is called {\em dense} if $B$ is
the closure of $f[A]$ under joins of $\omega$-chains; these form the left part of a factorization system (dense, embedding) -- see~\cite[A.7]{ADV}. Dense maps can be characterized as those morphism which are injective in  $\omega$-$\CPO^\to$ with respect to
\begin{center}
			\begin{tikzpicture}[baseline=(current  bounding  box.south), scale=2]
				
				\node (a'0) at (0,0.8) {$0$};
				\node (b0) at (1.1,0.8) {$\omega\cdot 1$};
				\node (c0) at (0,0) {$1$};
				\node (d0) at (1.1,0) {$\omega+1$};
				
				\path[font=\scriptsize]

				(a'0) edge [->] node [above] {$!$} (b0)
				(b0) edge [->] node [right] {$d$} (d0)
				(c0) edge [->] node [below] {$y$} (d0)
				(a'0) edge [->] node [left] {$!_1$} (c0);
			\end{tikzpicture}	
			\end{center}
where $\omega+1$ is the successor of $\omega$ seen as an ordinal (this is obtained from $\omega$ by freely adding the join of all elements), $y$ sends the only element of $1$ to the top element of $\omega+1$, and $d$ sends the countable anti-chain $\omega\cdot 1$ to the chain $\omega\subseteq\omega+1$.\\
To see this, consider $f\colon A\to B$; then (as before) a map $!_1\to f$ is an element $b\in B$, and such $b$ factorizes through the square above if and only if there exists a family $(a_n)_{n\in\omega}$ in $A$ (that is, a map $\omega\cdot 1\to A$) such that $(fa_n)_{n\in\omega}$ is a chain in $B$ and $\bigvee_nfa_n=b$. This is exactly expressing that $f$ is dense. See Section~\ref{omega-CPO} fo the corresponding notions of purity.

		 		\item Regular epimorphisms in a $\lambda$-quasivariety $\cv$ coincide with those maps $e$ for which $\cv_0(P,e)$ is surjective for every $\lambda$-presentable regular projective $P$ in $\cv$ (see Section~\ref{quasiv}). Thus, the class of regular epimorphisms in $\cv^\to$ can be characterized as that of those maps injective with respect to the family
		\begin{center}
			\begin{tikzpicture}[baseline=(current  bounding  box.south), scale=2]
				
				\node (a'0) at (0,0.8) {$0$};
				\node (b0) at (1.1,0.8) {$P$};
				\node (c0) at (0,0) {$P$};
				\node (d0) at (1.1,0) {$P$};
				
				\path[font=\scriptsize]

				(a'0) edge [->] node [above] {$!$} (b0)
				(b0) edge [->] node [right] {$d=1$} (d0)
				(c0) edge [->] node [below] {$y=1$} (d0)
				(a'0) edge [->] node [left] {$!$} (c0);
			\end{tikzpicture}	
		\end{center}
		for any $\lambda$-presentable and regular projective $P$.
	\end{enumerate}}
\end{exam}

\begin{lemma}\label{pointwise-E-pure}
	Let $\ck$ be a locally $\lambda$-presentable $\cv$-category. If Assumption~\ref{assumption} holds, the following conditions are equivalent for any $f\colon K\to L$ in $\ck$:\begin{enumerate}
		\item[(i)] $f$ is barely $(\lambda,\ce)$-pure;
		\item[(ii)] there exists a $\lambda$-filtered family $\cd\subseteq \ck^\to$ whose colimit is $f$ and such that $f$ is barely $\ce$-pure with respect to each morphism from $\cd$.
	\end{enumerate}
\end{lemma}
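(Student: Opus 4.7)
The plan is to prove the two implications separately. For $(i)\Rightarrow(ii)$, take $\cd$ to be the canonical diagram of $\lambda$-presentable objects of $\ck^\to$ equipped with morphisms to $f$. Since $\ck^\to$ is locally $\lambda$-presentable (inheriting from $\ck$) and its $\lambda$-presentable objects are precisely the morphisms between $\lambda$-presentable objects of $\ck$, this $\cd$ is $\lambda$-filtered with colimit $f$ in $\ck^\to$; by hypothesis (i), bare $\ce$-purity of $f$ with respect to each object of $\cd$ is automatic.

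For $(ii)\Rightarrow(i)$, fix $\cd$ as in (ii) and a morphism $g\colon A\to B$ with $A,B\in\ck_\lambda$; we must show that $q_{g,f}\in\ce$. By Assumption~\ref{assumption} this amounts to verifying that for each generator $(!_Z,y)\colon !_X\to d$ and each $v\colon X\to\cq(g,f)$ a lift exists. Since $\ck(A,-),\ck(B,-)$ preserve $\lambda$-filtered colimits, such colimits in $\cv$ commute with finite limits, and $\ce,\cm$ are closed under $\lambda$-filtered colimits in $\cv^\to$, we first obtain $\cq(g,f)\cong\colim_{i\in\cd}\cq(g,f_i)$. Since $X\in\cv_\lambda$, $v$ factors as $v=c_i\bar v$ for some $i\in\cd$ and $\bar v\colon X\to\cq(g,f_i)$, where $c_i$ denotes the colimit inclusion. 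The map $p'_{g,f_i}\colon\ck^\to(g,f_i)\twoheadrightarrow\cq(g,f_i)$ lies in $\ce$ by construction, hence is injective with respect to the generator by Assumption~\ref{assumption}; applied to $\bar v$ it yields $\bar u=(u_1,u_2)\colon Z\to\ck^\to(g,f_i)$ and $\bar v'\colon Y\to\cq(g,f_i)$ with $p'_{g,f_i}\bar u=\bar v'd$ and $\bar v'y=\bar v$. Setting $v'':=c_i\bar v'$ ensures $v''y=v$; it remains to produce $u''\colon Z\to\ck(B,K)$ with $q_{g,f}u''=v''d$.

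Since $p''_{g,f}\in\cm$ is monic, the required equation for $u''$ reduces to $\ck(g,K)u''=\ck(A,\alpha_i)u_1$, where $(\alpha_i,\beta_i)\colon f_i\to f$ is the colimit cocone inclusion; adjointly, we seek $\tilde u''\colon Z\cdot B\to K$ with $\tilde u''\circ(Z\cdot g)=\alpha_i\circ\tilde u_1$. This is where bare $\ce$-purity of $f$ with respect to $f_i\in\cd$ enters: the map $q_{f_i,f}\in\ce$ is injective with respect to the generator, so applying it to the canonical element of $\cq(f_i,f)$ coming from $(\alpha_i,\beta_i)$ yields a section (in the $Z$-parametrized sense dictated by the generator) of $q_{f_i,f}$, and composing this section with $\tilde u_2\colon Z\cdot B\to L_i$ through the enriched composition $\ck(L_i,K)\otimes\ck(B,L_i)\to\ck(B,K)$ produces $\tilde u''$. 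In the $\Set$ case (Example~\ref{ex-ass}(1)) where $X=Z=I$ the section is an actual morphism $r\colon L_i\to K$ with $rf_i=\alpha_i$ and one simply sets $\tilde u'':=r\circ\tilde u_2$; the main technical obstacle in general is ensuring that this composition can be carried out coherently given the specific form of the generator $(!_Z,y)\colon !_X\to d$ together with the enriched structure of $\cv$.
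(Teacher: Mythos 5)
Your treatment of $(i)\Rightarrow(ii)$ and the first half of $(ii)\Rightarrow(i)$ matches the paper: passing to $\cq(g,f)\cong\colim_i\cq(g,f_i)$, factoring $v\colon X\to\cq(g,f)$ through some stage $\cq(g,f_i)$ using $\lambda$-presentability of $X$, and recognizing that bare $\ce$-purity of $f$ with respect to $f_i$ is what must supply the lift. The gap is in the final combination step, and it is the step you yourself flag as "the main technical obstacle": your plan performs \emph{two} separate liftings against the generator $(!_Z,y)\colon !_X\to d$ --- one against $p'_{g,f_i}$ producing $\bar u=(u_1,u_2)\colon Z\to\ck^\to(g,f_i)$, and one against $q_{f_i,f}$ producing a ``$Z$-parametrized section'' $Z\to\ck(L_i,K)$ --- and then tries to compose them through $\ck(L_i,K)\otimes\ck(B,L_i)\to\ck(B,K)$. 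That composite is a map out of $Z\otimes Z$, not out of $Z$, and a general monoidal $\cv$ has no diagonal $Z\to Z\otimes Z$, so no map $u''\colon Z\to\ck(B,K)$ is produced. There is also a type mismatch in the second lifting: injectivity of $q_{f_i,f}$ with respect to $(!_Z,y)$ applies to maps $X\to\cq(f_i,f)$, whereas the ``canonical element'' coming from the cocone $(\alpha_i,\beta_i)$ is a map $I\to\cq(f_i,f)$, and there is in general no morphism $X\to I$ to transport it along.

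The paper resolves both problems simultaneously with a single lifting against a \emph{tensor} of the two maps: since the factorization system is enriched, $q_{f_i,f}\otimes p'_{g,f_i}\colon\ck(L_i,K)\otimes\ck^\to(g,f_i)\to\cq(f_i,f)\otimes\cq(g,f_i)$ lies in $\ce$, hence is injective with respect to $(!_Z,y)$. One lifts the single map $v'=(p'_{f_i,f}\circ c_i)\otimes v_i\colon X\cong I\otimes X\to\cq(f_i,f)\otimes\cq(g,f_i)$ (the canonical cocone element occupies the $I$-factor, your $\bar v=v_i$ the $X$-factor, so the type mismatch disappears), obtaining a lift $Z\to\ck(L_i,K)\otimes\ck^\to(g,f_i)$ out of a single $Z$; the desired $u''$ and $v''$ are then obtained by pushing forward along the composition-induced maps $M$ and $N$. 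Note that this is exactly where the hypothesis that $(\ce,\cm)$ is an \emph{enriched} factorization system (part of Assumption~\ref{assumption}) is used, and your argument never invokes it --- a sign that the combination step as you describe it cannot be completed without reorganizing it along these lines.
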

\begin{proof}
	$(i)\Rightarrow(ii)$. By definition of bare $(\lambda,\ce)$-purity and since $\ck$ is locally $\lambda$-presentable, it is enough to choose $\cd$ to be any $\lambda$-filtered family of $\lambda$-presentable objects of $\ck^\to$ with colimit $f$.
	
	$(ii)\Rightarrow(i)$. Fix a family $\cd=\{f_i\colon K_i\to L_i\}_{i\in I}$ as in condition $(ii)$ and consider any $g\colon A\to B$ between $\lambda$-presentable objects. We will use notation 
\ref{notation} with indices $i$'s denoting the $f_i$ to which they apply.
Then $\ck(B,K)\cong\colim_i\ck(B,K_i)$ and $\ck^\to(g,f)\cong\colim_i\ck^\to(g,f_i)$. Moreover, since $\cm$ and $\ce$ are closed under $\lambda$-filtered colimits in $\cv^\to$, also $\cq(g,f)\cong \colim_i\cq(g,f_i)$ and $q\cong\colim q_i$. We need to show that $q\colon\ck(B,K)\to\cq(g,f)$ lies in $\ce$.
	
	For that, write $\ce$ as an injectivity class $\ch$-inj in $\cv^\to$ as in the initial assumption. Thus, we are required to show that $q$ is injective with respect to each map $(!,y)\colon !_X\to d$ in $\ch$.
	
	Consider then a morphism $(!,v)\colon !_X\to q$, since $X$ is $\lambda$-presentable and $\cq(g,f)$ is the $\lambda$-filtered colimit of the $\cq(g,f_i)$ it follows that $v$ factors as a map $v_i\colon X\to \cq(g,f_i)$, for a fixed $i$. We can then construct the following commutative diagram:
	
	\begin{center}
		\begin{tikzpicture}[baseline=(current  bounding  box.south), scale=2]
			
			\node (21) at (-0.7,0) {$0$};
			\node (22) at (1.5,0) {$\ck(L_i,K)\otimes\ck^\to(g,f_i)$};
			\node (23) at (4,0) {$\ck(B,K)$};
			
			\node (31) at (-0.7,-0.9) {$X$};
			\node (32) at (1.5,-0.9) {$\cq(f_i,f)\otimes \cq(g,f_i)$};
			\node (33) at (4,-0.9) {$\cq(g,f)$};

			\path[font=\scriptsize]
			
			(21) edge [->] node [below] {$!$} (22)
			(21) edge [->] node [left] {$!_X$} (31)
			(22) edge [->>] node [right] {$q_{f_i}\otimes p'_i$} (32)
			(31) edge [->] node [above] {$v'$} (32)
			(22) edge [->] node [below] {$M$} (23)
			(23) edge [->] node [right] {$q$} (33)
			(32) edge [->] node [above] {$N$} (33)
			(21) edge [bend left=20,->] node [below] {$!$} (23)
			(31) edge [bend right=20,->] node [above] {$v$} (33);
		\end{tikzpicture}	
	\end{center} 
	where $M$ is induced by first applying the projection $\ck^\to(g,f_i)\to \ck(B,L_i)$ and then composing, and similarly $N$ is induced through the factorization system by the composition maps in $\ck$ and $\ck^\to$. Finally, the map $v'$ is given by tensoring the composite of the colimiting map $I\to \ck(f_i,f)$ and $p_i'\colon\ck(f_i,f)\to \cq(f_i,f)$ with $v_i$ (and composing with the isomorphism $X\cong I\otimes X$). 
	
	Now, $q_{f_i}\colon \ck(L_i,K)\to \cq(f_i,f)$ is in $\ce$ since $f$ is barely $\ce$-pure with respect to $f_i$, and 
$p_i'\colon \ck^\to(g,f_i)\to \cq(g,f_i)$ is in $\ce$ by definition. Since the factorization system is enriched, it follows that $q_{f_i}\otimes p'_i$ is also in $\ce$. Thus the pair $(!,v')$ factors through $(!,y)$; composing this with $(M,N)$ we obtain a factorization of $(!,v)$ through $(!,y)$. This shows that $q$ is injective with respect to $(!,y)$, and hence that $q\in\ce$.
\end{proof}

\begin{coro}\label{E-split-pure}
	If Assumption~\ref{assumption} holds, then every $\ce$-split morphism is barely $(\lambda,\ce)$-pure.
\end{coro}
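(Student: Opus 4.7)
The strategy is to combine Remark~\ref{relative}(1) with the characterisation established in Lemma~\ref{pointwise-E-pure}. Recall that Remark~\ref{relative}(1) ensures every $\ce$-split morphism $s\colon K\to L$ is barely $\ce$-pure with respect to itself, while Lemma~\ref{pointwise-E-pure} says that $s$ is barely $(\lambda,\ce)$-pure precisely when one can exhibit a $\lambda$-filtered family $\cd\subseteq\ck^\to$ whose colimit is $s$ and such that $s$ is barely $\ce$-pure with respect to every member of $\cd$.

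Given an $\ce$-split $s\colon K\to L$, the plan is to take for $\cd$ the constant $\lambda$-filtered diagram at $s$, indexed for instance by the poset $\lambda$. Because every $\lambda$-filtered category is non-empty and connected, the colimit of this constant diagram in $\ck^\to$ is $s$ itself. Every object of $\cd$ being $s$, Remark~\ref{relative}(1) immediately gives that $s$ is barely $\ce$-pure with respect to each member of $\cd$. Condition $(ii)$ of Lemma~\ref{pointwise-E-pure} is therefore satisfied and the implication $(ii)\Rightarrow(i)$ yields the claim.

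No real obstacle is anticipated: the argument amounts to the observation that bare $\ce$-purity of $s$ with respect to itself, once repackaged as a constant $\lambda$-filtered diagram, is already enough to trigger Lemma~\ref{pointwise-E-pure}. The only implicit ingredient is the standing hypothesis of Section~\ref{E-is-inj} that $\ck$ be locally $\lambda$-presentable, which is required for the invocation of that lemma. It is worth noting that a more direct proof, avoiding Lemma~\ref{pointwise-E-pure}, would essentially require the unresolved restriction property of Remark~\ref{relative}(3), namely that bare $\ce$-purity with respect to $g$ transfers along morphisms $g'\to g$ in $\ck^\to$; our route bypasses this altogether by paying the price of Assumption~\ref{assumption}.
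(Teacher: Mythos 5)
Your proof is correct and is essentially the paper's own argument: the paper likewise combines the fact that an $\ce$-split morphism is barely $\ce$-pure with respect to itself (Remark~\ref{relative}(1), miscited there as (2)) with condition (ii) of Lemma~\ref{pointwise-E-pure}, taking $\cd:=\{s\}$ where you take the constant $\lambda$-filtered diagram at $s$ — the same device. Your closing observations about local presentability and about bypassing Remark~\ref{relative}(3) are accurate but not needed.
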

\begin{proof}
	Let $s\colon K\to L$ be $\ce$-split; then $s$ is barely $\ce$-pure with respect to itself by Remark~\ref{relative}(2). Thus the family $\cd:=\{s\}$ satisfies condition (ii) above, making $s$ barely $(\lambda,\ce)$-pure.
\end{proof}

We are ready to prove a characterization theorem for $(\lambda,\ce)$-injectivity classes. This will be applied in the three very different settings of categories enriched over quantale-metric spaces (Section~\ref{quantales}), categories enriched over $\omega$-directed posets (Section~\ref{omega-CPO}), and categories enriched over $\lambda$-quasivarieties (Section~\ref{quasiv}). 

\begin{theo}\label{char-theo-1}
	Suppose that Assumption~\ref{assumption} holds and that
	\begin{enumerate}
		\item[(i)] barely $(\lambda,\ce)$-pure and $(\lambda,\ce)$-pure morphisms coincide;
		\item[(ii)] there is $\cg\subseteq\cv$ such that powers by $\cg$ are $\ce$-stable and whenever $\cv_0(G,e)$ is surjective for any $G\in\cg$, then $e\in\ce$.
	\end{enumerate}
	Then the $(\lambda,\ce)$-injectivity classes in a locally $\lambda$-presentable $\cv$-category $\ck$ are precisely the classes closed under $\lambda$-filtered colimits, products, powers by $\cg$, and $(\lambda,\ce)$-pure subobjects.
\end{theo}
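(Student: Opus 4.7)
\emph{Forward direction.} Assumption~\ref{assumption} presents $\ce$ as an injectivity class in $\cv^\to$, so $\ce$ is closed under products there, making products $\ce$-stable. Powers by $\cg$ are $\ce$-stable by hypothesis~(ii). Proposition~\ref{closure1} then gives the asserted closure of any $(\lambda,\ce)$-injectivity class under $\lambda$-filtered colimits, products, powers by $\cg$, and $(\lambda,\ce)$-pure subobjects.

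\emph{Reverse direction, setup.} Suppose $\cl\subseteq\ck$ is closed under the four listed constructions and put
$$
\ch:=\{h\colon A\to B \mid A,B\in\ck_\lambda,\ \ck(h,L)\in\ce\text{ for every }L\in\cl\},
$$
which is essentially a set since $\ck_\lambda$ is. By definition $\cl\subseteq\tx{Inj}_\ce\ch$. For the converse, given $X\in\tx{Inj}_\ce\ch$, the plan is to produce a $(\lambda,\ce)$-pure morphism $\eta\colon X\to\bar X$ with $\bar X\in\cl$; then $X\in\cl$ by closure of $\cl$ under $(\lambda,\ce)$-pure subobjects.

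\emph{Construction and purity check.} I will build $\eta$ by a transfinite small-object-style argument: starting from $X_0:=X$, at each successor stage adjoin to $X_\alpha$ (via pushouts along $\lambda$-presentable data) factorizations of all squares $(u,v)\colon h\to X_\alpha$ with $h\in\ch$, interleaving with products and powers-by-$\cg$ to stay within $\cl$; at limit stages take conical $\lambda$-filtered colimits. Since $\ck_\lambda$ is essentially small, the construction stabilises at some sufficiently large regular cardinal $\mu$, and $\bar X:=X_\mu$ lies in $\cl$ by closure under products, powers by $\cg$, and $\lambda$-filtered colimits. By construction $\eta\colon X\to\bar X$ is presented in $\ck^\to$ as a $\lambda$-filtered colimit of morphisms $\eta_i$ indexed by individual $\ch$-steps. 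Hypothesis~(i) and Lemma~\ref{pointwise-E-pure} reduce $(\lambda,\ce)$-purity of $\eta$ to exhibiting a $\lambda$-filtered family in $\ck^\to$ with colimit $\eta$ with respect to which $\eta$ is barely $\ce$-pure; the family $\{\eta_i\}$ is the candidate. For each $\eta_i$ arising from an $h\in\ch$, the $\ce$-injectivity of $X$ with respect to $h$ provides exactly the filler inside $X$ needed to place the map $q$ of Notation~\ref{notation} into $\ce$, with hypothesis~(ii) allowing membership in $\ce$ to be certified via surjectivity of $\cv_0(G,-)$ for $G\in\cg$.

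\emph{Main obstacle.} The delicate point is balancing two requirements on $\bar X$: it must live in $\cl$ (forcing the construction to employ only products, powers by $\cg$, and $\lambda$-filtered colimits of objects of $\cl$) while the canonical $\eta\colon X\to\bar X$ must be $(\lambda,\ce)$-pure (a property testing $X$ against the class $\ch$, which itself depends on $\cl$). Hypothesis~(ii) is what reconciles them: powers by $\cg$ mediate between plain and $\ce$-enriched injectivity, so that applying $G\pitchfork(-)$ for $G\in\cg$ turns the enriched-purity check into surjectivity checks that the small-object construction provides by design, and the closure of $\cl$ under powers by $\cg$ is precisely what permits this reduction to be carried out inside $\cl$.
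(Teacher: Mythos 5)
Your forward direction and your choice of $\ch$ coincide with the paper's, but the reverse direction has a genuine gap, located in the construction of $\bar X$ and in the purity verification. A transfinite small-object iteration adjoins cells by \emph{pushout}, and $\cl$ is only assumed closed under products, powers by $\cg$, $\lambda$-filtered colimits and $(\lambda,\ce)$-pure subobjects --- none of these absorbs pushouts, and the terminal stage $X_\mu$ is the colimit of a chain starting at $X\notin\cl$, so "interleaving with products and powers by $\cg$" cannot force $\bar X\in\cl$. The paper sidesteps this entirely: closure under $(\lambda,\ce)$-pure subobjects implies closure under ordinary $\lambda$-pure subobjects (Corollary~\ref{pure-Epure}), so $\cl_0$ is already an ordinary $\lambda$-injectivity class by the unenriched theorem of \cite{RAB}, and \cite[Theorem~3.3]{LT20} then supplies a $\lambda$-filtered-colimit-preserving pointed endofunctor $R$ whose components $r_K\colon K\to RK$ are weak reflections into $\cl_0$; thus $RK\in\cl$ for free and $r_K$ is the candidate pure map. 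Your proposal has no substitute for this input.

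The second problem is the application of Lemma~\ref{pointwise-E-pure}. The "individual $\ch$-steps" of a cellular construction are pushouts of maps of $\ch$, and the stage maps $X\to X_\alpha$ are not between $\lambda$-presentable objects; the $\ce$-injectivity of $X$ with respect to $\ch$ does not directly certify bare $\ce$-purity of $\eta$ with respect to either family. The paper's family $\cd$ consists of maps $h^i_j\colon K_i\to L^i_j$ between $\lambda$-presentable objects obtained by writing $r_K=\colim r_{K_i}$ (using that $R$ preserves $\lambda$-filtered colimits) and each $r_{K_i}$ as a $\lambda$-filtered colimit in $K_i\downarrow\ck$. The step your sketch omits is showing that each $h^i_j$ actually belongs to $\ch$: because $r_{K_i}$ factors through $h^i_j$ and is a weak reflection, every $A\in\cl$ is \emph{ordinarily} injective with respect to $h^i_j$; closure of $\cl$ under powers by $\cg$ together with hypothesis (ii) upgrades this to $\ce$-injectivity, i.e.\ $h^i_j\in\ch$. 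Only then does $K\in\tx{Inj}_\ce\ch$ yield $\ck(h^i_j,K)\in\ce$, after which properness of $(\ce,\cm)$ forces the comparison $q$ of Notation~\ref{notation} into $\ce$. Your "main obstacle" paragraph correctly identifies the role of powers by $\cg$ and of (ii), but applies them to the wrong maps and in the wrong direction; as written the argument does not close.
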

\begin{proof}
Since $\ce$ is an injectivity class, it is closed in $\cv^\to$ under products. Thus the necessity follows from \ref{closure1}.

Conversely, let $\ca$ be closed under products, $\lambda$-filtered colimits, and $(\lambda,\ce)$-pure subobjects in $\ck$. Since $\ca$ is closed also under $\lambda$-pure subobjects by Corollary~\ref{pure-Epure}, its underlying category $\ca_0$ is a $\lambda$-injectivity class of $\ck_0$ by \cite[Theorem~2.2]{RAB}. Thanks to \cite[Theorem~3.3]{LT20} we then obtain a functor $R\colon\ck_0\to\ck_0$ preserving $\lambda$-filtered colimits and a natural transformation $r\colon 1_{\ck_0}\to R$ such that each component $r_K\colon K\to RK$ is an ordinary weak reflection of $K$ into $\ca_0$.

Now we move back to the enriched setting and consider the set $\ch$ of all those maps $g\colon A\to B$ between $\lambda$-presentable objects such that every object of $\ca$ is $\ce$-injective with respect to them. We shall prove that $\ca=\tx{Inj} \ch$.

The fact that $\ca\subseteq\tx{Inj} \ch$ is true by definition. Consider then $K\in\tx{Inj} \ch$ together with its ordinary weak reflection $r_K\colon K\to RK$ into $\ca$. To conclude it is enough to show that $r_K$ is $(\lambda,\ce)$-pure, so that $K\in\ca$ by the assumptions on $\ca$. For this, we use Lemma~\ref{pointwise-E-pure}.

Write $K\cong\colim K_i$ as a $\lambda$-filtered colimit of $\lambda$-presentable objects in $\ck$ with colimiting maps $x_i\colon K_i\to K$; since $R$ preserves $\lambda$-filtered colimits then $RK\cong\colim RK_i$ and $r_K\cong \colim\ r_{K_i}$ in the category of arrows $\ck^{\rightarrow}$. Now, by writing each $RK_i$ as a $\lambda$-filtered colimit
$$s_j:L^i_j\to RK_i$$
of $\lambda$-presentable objects, we get that each $r_{K_i}$ is a $\lambda$-filtered colimit of morphisms $h^i_j:K_i\to L^i_j$ in $K_i\downarrow\ck$ via certain $s^i_j:L^i_j\to RK_i$. It follows that the family $$\cd:=\{h^i_j:K_i\to L^i_j\}_{i,j}$$ is $\lambda$-directed, contained in $\ck_\lambda^{\rightarrow}$, and has colimit $r_K$. By Lemma~\ref{pointwise-E-pure}, to conclude it is enough to show that $r_K$ is $\ce$-pure with respect to each $h_j$.

Since $r_{K_i}=s_jh^i_j$ and $r_{K_i}$ is an ordinary weak reflection, every object from $\ca$ is injective in the ordinary sense with respect to $h^i_j$; that is $\ck_0(h^i_j,A)$ is surjective for any $A\in\ca$. Since by hypothesis $\ca$ is closed under powers by $\cg$, it follows that for any $A\in\ca$ and $G\in\cg$ the maps
$$ \cv_0(G,\ck(h^i_j,A))\cong \ck_0(h^i_j,G\pitchfork A) $$
are surjective. By $(ii)$ this means that $\ck(h^i_j,A)\in\ce$, and thus every object from $\ca$ is $\ce$-injective with respect to $h^i_j$. Therefore $K$ is $\ce$-injective with respect to $h^i_j$; that is, $\ck(h^i_j,K)$ is in $\ce$.
Now, following Notation~\ref{notation} for $g:=h^i_j$ and $f=r_K$ we have the following commutative triangle
\begin{center}
	\begin{tikzpicture}[baseline=(current  bounding  box.south), scale=2]
		
		\node (a0) at (-0.1,0.8) {$\ck(L^i_j,K)$};
		\node (c0) at (2.2,0.8) {$ \ck(K_i,K)$};
		\node (d0) at (1.1,0.2) {$\cq(h^i_j,r_K)$};
		
		\path[font=\scriptsize]
		
		(a0) edge [->>] node [above] {$\ck(h^i_j,K)$} (c0)
		(a0) edge [->] node [below] {$q\ \ \ $} (d0)
		(d0) edge [>->] node [below] {$\ \ \ p''$} (c0);
	\end{tikzpicture}
\end{center}
where $p''$ is in $\cm$. Since $\ck(h^i_j,K)$ is in $\ce$, also $p''$ is in $\ce$. Hence $p''$ is an isomorphism, and therefore $q$ is in $\ce$. Consequently, $r_K$ is $\ce$-pure with respect to each $h^i_j$, as claimed.
\end{proof}

\begin{exam}
Let $\cv=\Ban$ with $(\ce,\cm)=$ (dense, isometry); then: $\ce$ satisfies Assumptions~\ref{assumption}, the two notions of purity coincide (by Example~\ref{Ban2}), and $\cg:=\{\mathbb C\}$ satisfies condition (ii) above.
	
	Therefore we can apply Theorem~\ref{char-theo-1} and obtain the following characterization: the $(\lambda,\ce)$-injectivity classes in a locally $\lambda$-presentable $\Ban$-category $\ck$ are precisely the classes closed under $\lambda$-filtered colimits, products, and $(\lambda,\ce)$-pure subobjects.
\end{exam}

In the same spirit as $\ce$-injectivity we can consider orthogonality: given a morphism $f\colon A\to B$ in a $\cv$-category $\ck$, we say that $X\in\ck$ is {\em $f$-orthogonal} if the map
$$ \ck(f,X)\colon\ck(B,X)\longrightarrow\ck(A,X) $$
is an isomorphism. This is the same as considering $\ce$-injectivity with respect to $\ce=\cv^{\cong}$; note, however, that we shall not fix such choice of $\ce$ when considering $\ce$-purity below.

Given a set of morphisms $\ch$ in $\ck$, we denote by $\ch^\perp$ the full subcategory of $\ck$ spanned by the object which are $f$-orthogonal for any $f\in\ch$. We call {\em orthogonality class} any full subcategory of $\ck$ arising that way; if the morphisms in $\ch$ have $\lambda$-presentable domain and codomain, we call $\ch^\perp$ a {\em $\lambda$-orthogonality class}. See \cite[Chapter~6]{Kel82:book} where the enriched concept was first considered. 

\begin{propo}\label{closure-orth}
	Every $\lambda$-orthogonality class $\ch^\perp$ in a complete and cocomplete $\cv$-category $\ck$ is closed under:\begin{enumerate}
		\item all limits;
		\item $\lambda$-filtered colimits.		
	\end{enumerate}
	If moreover, $\ce_\lambda^\perp=\cm$ and $[e,X]\in\cm$ whenever $e\in\ce$ and $X\in\cv$, then $\ch^\perp$ is also closed under
	\begin{enumerate}
		\item[(3)] $(\lambda,\ce)$-pure subobjects.
	\end{enumerate}
\end{propo}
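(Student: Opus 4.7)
Parts (1) and (2) follow from the fact that the enriched representables $\ck(A,-)$ and $\ck(B,-)$ preserve limits (and, since $A,B$ are $\lambda$-presentable, also preserve $\lambda$-filtered colimits), together with the closure of isomorphisms under all (weighted) limits and colimits in $\cv^\to$. Concretely, for any $f\colon A\to B$ in $\ch$, the map $\ck(f,\lim_d X_d)$ is isomorphic to $\lim_d\ck(f,X_d)$, a limit of isomorphisms and hence itself an isomorphism; the analogous computation with $\lambda$-filtered colimits proves (2).

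For (3), my plan is to combine Proposition~\ref{closure1}(3) with Proposition~\ref{good1}. Since every isomorphism lies in $\ce$, the inclusion $\ch^\perp\subseteq\tx{Inj}_\ce\ch$ holds. Given a $(\lambda,\ce)$-pure morphism $i\colon Y\to X$ with $X\in\ch^\perp$, Proposition~\ref{closure1}(3) yields $Y\in\tx{Inj}_\ce\ch$; equivalently, $\ck(g,Y)\in\ce$ for every $g\colon A\to B$ in $\ch$. Since $\ce\cap\cm$ consists of isomorphisms, it suffices to show $\ck(g,Y)\in\cm$, as this would make $Y$ $g$-orthogonal for every $g\in\ch$.

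The extra hypotheses of (3) are exactly those required by Proposition~\ref{good1}, which applied to the $\lambda$-presentable objects $A$ and $B$ gives $\ck(A,i),\ck(B,i)\in\cm$. The naturality square
\[
\ck(A,i)\circ\ck(g,Y)=\ck(g,X)\circ\ck(B,i)
\]
has its right-hand side in $\cm$ as the composite of the isomorphism $\ck(g,X)$ with the $\cm$-map $\ck(B,i)$, so the left-hand side is in $\cm$ as well. The main obstacle is the final step: deducing $\ck(g,Y)\in\cm$ from $\ck(A,i)\circ\ck(g,Y)\in\cm$ and $\ck(A,i)\in\cm$. This will follow from a standard cancellation for factorization systems. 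Namely, if $\phi\in\cm$ and $\phi\psi\in\cm$, factor $\psi=me$ with $m\in\cm$ and $e\in\ce$; then $\phi\psi=(\phi m)e$ is an $(\ce,\cm)$-factorization of a morphism already lying in $\cm$, so uniqueness of factorizations forces $e$ to be an isomorphism and hence $\psi\in\cm$. Applied with $\phi=\ck(A,i)$ and $\psi=\ck(g,Y)$, this gives $\ck(g,Y)\in\ce\cap\cm$, concluding that $Y\in\ch^\perp$.
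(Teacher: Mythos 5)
Your proposal is correct and follows essentially the same route as the paper: parts (1) and (2) via preservation of limits and $\lambda$-filtered colimits by the relevant representables (the paper cites \cite[Propositions~2.6 and 2.7]{LR12} for this), and part (3) by combining the argument of Proposition~\ref{closure1}(3) to get $\ck(g,Y)\in\ce$ with Proposition~\ref{good1} and the cancellation property of $\cm$ to get $\ck(g,Y)\in\cm$, hence an isomorphism. The only difference is cosmetic: the paper inlines the $\cp(g,L)\cong\ck(A,L)$ computation instead of citing Proposition~\ref{closure1}(3), and leaves the naturality-square/cancellation step implicit where you spell it out.
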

\begin{proof}
	Points (1) and (2) follows from Propositions 2.6 and 2.7 of \cite{LR12}.
	
	For (3), assume that $f\colon K\to L$ is $(\lambda,\ce)$-pure and that $L\in\ch^\perp$. For any $g\colon A\to B$ in $\ch$, following Notation~\ref{strong-pure}, we obtain that $\cp(g,L)\cong \ck(A,L)$ (since $\ck(g,L)$ is an isomorphism), and hence that $\cp(g,f)\cong\ck(A,K)$. Thus, the $\ce$-purity of $f$ with respect to $g$ implies that $\ck(g,K)$ lies in $\ce$. But by Proposition~\ref{good1} we know that $\ck(B,f)$ and $\ck(A,f)$ are in $\cm$, so that $\ck(g,K)$ is in $\cm$ too. It follows that $\ck(g,K)$ is an isomorphism and hence that $K\in\ch^\perp$.
\end{proof}

As a consequence of the main theorem of this section we obtain:

\begin{coro}\label{orth->inj}
	Assume that $\ce_\lambda^\perp=\cm$ and $[e,X]\in\cm$ whenever $e\in\ce$ and $X\in\cv$. If the hypotheses of Theorem~\ref{char-theo-1} are satisfied, then every $\lambda$-orthogonality class in a locally $\lambda$-presentable $\cv$-category $\ck$ is a $(\lambda,\ce)$-injectivity class.
\end{coro}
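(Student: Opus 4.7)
The plan is to deduce this directly from Theorem~\ref{char-theo-1} by verifying that any $\lambda$-orthogonality class $\ch^\perp$ in $\ck$ satisfies the four closure properties that characterize $(\lambda,\ce)$-injectivity classes; namely, closure under $\lambda$-filtered colimits, products, powers by $\cg$, and $(\lambda,\ce)$-pure subobjects. Most of the work has already been done in Proposition~\ref{closure-orth}.

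First, closure under $\lambda$-filtered colimits is exactly part (2) of Proposition~\ref{closure-orth}. Closure under products and powers by $\cg$ both follow from part (1) of the same proposition, since products and powers are instances of (weighted) limits. Closure under $(\lambda,\ce)$-pure subobjects is then part (3) of Proposition~\ref{closure-orth}, whose hypotheses ($\ce_\lambda^\perp = \cm$ and $[e,X]\in\cm$ for $e\in\ce$, $X\in\cv$) are exactly the additional assumptions that we carry over into the corollary. Once all four closures are in place, Theorem~\ref{char-theo-1} delivers the conclusion: $\ch^\perp$ is a $(\lambda,\ce)$-injectivity class.

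The only subtlety worth flagging is that Theorem~\ref{char-theo-1} requires $\ck$ to be locally $\lambda$-presentable as a $\cv$-category, and that the ambient hypotheses on $\ce$ and $\cm$ (Assumption~\ref{assumption}, coincidence of the two notions of purity, and the existence of a suitable $\cg$) are all in force by the corollary's statement. So no new machinery is needed; the proof is genuinely a bookkeeping step showing that $\lambda$-orthogonality implies the four closure conditions, and then invoking the characterization theorem. There is no significant obstacle to overcome here beyond correctly citing the items of Proposition~\ref{closure-orth} and Theorem~\ref{char-theo-1}.
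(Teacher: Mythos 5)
Your proof is correct and follows exactly the paper's route: the paper also deduces the corollary immediately from Proposition~\ref{closure-orth} together with Theorem~\ref{char-theo-1}. Your more explicit matching of the four closure conditions to the items of Proposition~\ref{closure-orth} is just a spelled-out version of the same one-line argument.
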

\begin{proof}
	Follows immediately from Proposition~\ref{closure-orth} above and Theorem~\ref{char-theo-1}.
\end{proof}

\begin{rem}
	Given a set of maps $\ch$ in $\ck$, ordinarily there is an explicit way of constructing (using pushouts) another collection $\ch'$ for which
	$$ \ch^\perp=\tx{Inj} \ch',$$
	see \cite[Remark~4.4.1]{AR}. It is not clear to us if such an argument would work in the enriched setting for a given factorization system $(\ce,\cm)$ satisfying the hypotheses of the corollary above.
\end{rem}

\section{Purity over quantale-valued metric spaces}\label{quantales}
Under a \textit{quantale} $Q$ we will mean a complete lattice $(Q,\leq)$ with a commutative monoid structure $(Q,+,0)$ where $0$ is the least element of $Q$ such that 
$$
x+\bigwedge\limits_{j\in J}y_j=\bigwedge\limits_{j\in J}(x+ y_j)
$$  
for every $x,y_j\in Q$, $j\in J$. Given $a,b\in Q$ we say that \textit{$a$ is totally above $b$}, and write $a\succ b$, if: for every subset $S\subseteq Q$ such that $a\geq\bigwedge S$ there exists $s\in S$ such that $b\geq s$. Let now
$\Uparrow(a)=\{x\in Q|x\succ a\}$; a \textit{value quantale} is a quantale $Q$ such that $$a=\bigwedge\Uparrow(a)$$ for every $a\in Q$ and,
moreover, $a\wedge b\succ 0$ whenever $a,b\succ 0$ (see \cite{F,CW}).
  
\begin{defi}\label{Q-met}
Let $Q$ be a value quantale.
A $Q$-\textit{metric space} is a set $X$ equipped with a map $d:X\times X\to Q$ satisfying the following conditions for all $x,y,z\in X$:
\begin{enumerate} 
\item $d(x,x)=0$,
\item $d(x,y)=d(y,x)$,
\item $d(x,z)\leq d(x,y)+ d(y,z)$, and
\item $d(x,y)=0\Rightarrow x=y$.
 \end{enumerate}
A map $f:X\to Y$ is \textit{non-expanding} if $d(fx,fy)\leq d(x,y)$ for all $x,y\in X$. The resulting category is denoted by $Q$-$\Met$.
\end{defi}

\begin{exams}
(1) Metric spaces are $Q$-valued metric spaces for $Q=([0,\infty],\leq,+)$.

(2) Ultrametric spaces are $Q$-quantales for $Q=((0,\infty],\leq,\vee)$.

(3) Probabilistic metric spaces are $Q$-metric spaces where $Q$ is the value quantale  of distance distribution functions. These are left continuous maps $f:[0,\infty]\to[0,1]$, i.e., maps such that 
$$
f(x)=\bigvee\limits_{y<x}f(y).
$$ 
In particular, $f(0)=0$. The order on $Q$ is the opposite of the pointwise order and the operation
is
$$
(f+g)(x)=\bigvee\limits_{y+z\leq x}\max\{f(y)+ g(z)-1,0\}.
$$
(see \cite{F}, or \cite[1.2(4)]{CH}).
\end{exams}

The \textit{tensor product} $X\otimes Y$ of $Q$-valued metric spaces is given by the set
$X\times Y$ with the distance
$$
d((x_1,y_1),(x_2,y_2))=d(x_1,x_2)+ d(y_1,y_2).
$$
The tensor unit $I$ is the one-element $Q$-metric space.
The internal hom $[X,Y]$ is the $Q$-metric space on the set $Q$-$\Met(X,Y)$ with distance
$$
d(f,g)=\bigvee\limits_{x\in X}d(fx,gx).
$$

	

\begin{propo}
$Q$-$\Met$ is a symmetric monoidal closed category.
\end{propo}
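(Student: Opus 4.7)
The plan is to verify four ingredients in sequence: (i) that $X\otimes Y$ equipped with $d((x_1,y_1),(x_2,y_2))=d(x_1,x_2)+d(y_1,y_2)$ is an object of $Q$-$\Met$; (ii) that $[X,Y]$ equipped with $d(f,g)=\bigvee_{x}d(fx,gx)$ is an object of $Q$-$\Met$; (iii) that the evident associator, left/right unitors, and symmetry built from associativity, commutativity, and the unit of $(Q,+,0)$ are non-expanding bijections satisfying Mac Lane's coherence; (iv) that currying and uncurrying give a natural bijection $Q\textrm{-}\Met(X\otimes Y,Z)\cong Q\textrm{-}\Met(X,[Y,Z])$.

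The single technical prerequisite I would extract at the outset is monotonicity of $+$ in each variable. Applying the axiom $x+\bigwedge_{j\in J}y_j=\bigwedge_{j\in J}(x+y_j)$ to a two-element meet gives $y\leq y'\Rightarrow x+y\leq x+y'$, and since $0$ is the neutral element and the least element, this specializes to $x+y\geq x$ for all $x,y\in Q$. From this the $Q$-metric axioms for $X\otimes Y$ are immediate: reflexivity is $0+0=0$, symmetry is componentwise, the triangle inequality follows by summing the two componentwise triangle inequalities and invoking monotonicity, and separation uses that $d(x_1,x_2)+d(y_1,y_2)=0$ together with $d(x_1,x_2),d(y_1,y_2)\geq 0$ forces each summand to equal $0$.

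For $[X,Y]$ the first three axioms are pointwise; the key step is the triangle inequality, which uses
\[
d(fx,hx)\leq d(fx,gx)+d(gx,hx)\leq \Bigl(\bigvee_{x'}d(fx',gx')\Bigr)+\Bigl(\bigvee_{x'}d(gx',hx')\Bigr)
\]
(two applications of monotonicity) followed by taking $\bigvee_x$ on the left. The closed structure is then set up by explicit currying: given non-expanding $f\colon X\otimes Y\to Z$, put $\hat f(x)(y):=f(x,y)$. The map $\hat f(x)$ is non-expanding in $y$ because $d(x,x)=0$ is the unit of $+$, and $\hat f\colon X\to[Y,Z]$ is non-expanding because $d(\hat f(x_1)y,\hat f(x_2)y)\leq d(x_1,x_2)+d(y,y)=d(x_1,x_2)$ uniformly in $y$. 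The inverse uncurrying $g\mapsto\check g(x,y):=g(x)(y)$ is non-expanding by one application of the triangle inequality at the intermediate vertex $(x_2,y_1)$. Naturality and the Mac Lane coherence diagrams are formal, reducing to the commutative-monoid axioms of $(Q,+,0)$.

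The main obstacle is the triangle inequality for $[X,Y]$: the value quantale axioms are phrased in terms of compatibility of $+$ with meets, whereas what is required is the inequality $\bigvee_j(a+b_j)\leq a+\bigvee_j b_j$. This is not one of the axioms, but it follows at once from the monotonicity of $+$ derived above; making this deduction explicit is the only non-routine step of the verification.
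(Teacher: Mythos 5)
Your proposal is correct and its core --- the currying/uncurrying bijection established via the inequalities $d(f(x,y_1),f(x,y_2))\leq d(x,x)+d(y_1,y_2)=d(y_1,y_2)$ and the triangle-inequality estimate through an intermediate vertex --- is exactly the argument the paper gives. You additionally verify explicitly that $X\otimes Y$ and $[X,Y]$ are $Q$-metric spaces by first deriving monotonicity of $+$ from the meet-distributivity axiom; the paper dismisses this as ``clear'' but does rely on the same monotonicity implicitly, so your extra care is sound and consistent with the paper's route.
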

\begin{proof}
Clearly $\otimes$ is symmetric and $I$ is a unit for it. Consider a nonexpanding map $f:X\otimes Y\to Z$. Since
$$
d(f(x,y_1),f(x,y_2))\leq d((x,y_1),(x,y_2))=d(x,x)+ d(y_1,y_2)=
d(y_1,y_2),
$$
the maps $f(x,-)\colon Y\to Y$ are nonexpanding. Moreover, since
\begin{align*}
d(f(x_1,-),f(x_2,-))&=\bigvee\limits_{y\in Y}d(f(x_1,y),f(x_2,y))\leq
\bigvee\limits_{y\in Y}d((x_1,y),(x_2,y))\\
&=\bigvee\limits_{y\in Y}(d(x_1,x_2)+ d(y,y))= d(x_1,x_2),
\end{align*}
the map $\tilde{f}\colon X\to[Y,Z]$ such that $\tilde{f}x =f(x,-)$ is nonexpanding.

Conversely, consider a nonexpanding map $g:X\to[Y,Z]$ and take
$\tilde{g}:X\otimes Y\to Z$ such that $\tilde{g}(x,y)=g(x)(y)$. Since
\begin{align*}
d(\tilde{g}(x_1,y_1),\tilde{g}(x_2,y_2))&=d(g(x_1)(y_1),g(x_2)(y_2))\\
&\leq d(g(x_1)(y_1),g(x_1)(y_2))+ d(g(x_1)(y_2),g(x_2)(y_2))\\
&\leq d(y_1,y_2) +\bigvee\limits_{y\in Y}d(g(x_1)(y),g(x_2)(y))\\
&=d(y_1,y_2) +d(g(x_1),g(x_2))\\
&\leq d(y_1,y_2)+d(x_1,x_2)=d((x_1,y_1),(x_2,y_2)),
\end{align*}
the map $\tilde{g}$ is nonexpanding. This defines a natural bijection between maps $X\otimes Y\to Z$ and $X\to[Y,Z]$.
\end{proof}

\begin{propo}
$Q$-$\Met$ is a locally presentable category.
\end{propo}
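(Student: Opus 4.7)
The plan is to exhibit $Q\tx{-}\Met$ as a reflective full subcategory of a locally presentable category of relational structures, closed under sufficiently filtered colimits, and then to apply the standard accessibility criterion. First I would introduce the relational signature $\Sigma=\{R_q\mid q\in Q\}$ of binary relation symbols indexed by $Q$, and let $\Sigma\tx{-}\Str$ denote the category of $\Sigma$-structures and relation-preserving maps. For $\kappa:=\max(\aleph_1,|Q|^+)$, this ambient category is locally $\kappa$-presentable (see \cite[Chapter~5]{AR}). The functor $E\colon Q\tx{-}\Met\to\Sigma\tx{-}\Str$ sending $(X,d)$ to $(X,(R_q^d)_{q\in Q})$, with $R_q^d(x,y)\Leftrightarrow d(x,y)\leq q$, is fully faithful because a map preserves every relation $R_q$ if and only if it is nonexpanding.

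Next I would show that the essential image of $E$ is cut out by the Horn-type implications: reflexivity $R_0(x,x)$; symmetry $R_q(x,y)\Rightarrow R_q(y,x)$; triangle $R_q(x,y)\wedge R_{q'}(y,z)\Rightarrow R_{q+q'}(x,z)$; monotonicity $R_q(x,y)\Rightarrow R_{q'}(x,y)$ whenever $q\leq q'$; separation $R_0(x,y)\Rightarrow x=y$; and the infimum axiom $\bigl(\forall q\succ p,\ R_q(x,y)\bigr)\Rightarrow R_p(x,y)$ for each $p\in Q$. Each of these axioms is $\kappa$-small, so the axiomatised full subcategory is closed in $\Sigma\tx{-}\Str$ under all limits and under $\kappa$-filtered colimits. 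A reflection $\Sigma\tx{-}\Str\to Q\tx{-}\Met$ can then be constructed by setting $d(x,y):=\bigwedge\{q:R_q(x,y)\}$ and quotienting by the equivalence relation $d(x,y)=0$. This exhibits $Q\tx{-}\Met$ as a reflective, $\kappa$-accessibly embedded full subcategory of the locally $\kappa$-presentable category $\Sigma\tx{-}\Str$, and such a subcategory is itself locally $\kappa$-presentable by \cite[Theorem~1.39~and~Corollary~2.48]{AR}.

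The main obstacle will be verifying that the infimum axiom really does pin down the essential image. The crucial input here is the defining property $p=\bigwedge\Uparrow(p)$ of value quantales: this guarantees that for any $\Sigma$-structure satisfying the listed axioms, the derived distance $d(x,y)=\bigwedge\{q:R_q(x,y)\}$ itself satisfies $R_{d(x,y)}(x,y)$, so that the relations are genuinely recovered as $d(x,y)\leq q$. The remaining $Q$-metric axioms for $d$ then follow from the Horn axioms together with the algebraic structure of the quantale; in particular, the triangle inequality for $d$ uses the infinitary distributivity $x+\bigwedge y_j=\bigwedge(x+y_j)$ built into the definition of a quantale.
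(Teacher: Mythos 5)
Your proof is correct and takes essentially the same route as the paper: both encode $Q$-metric spaces as relational structures over the signature $\{R_q\}_{q\in Q}$ cut out by the same Horn axioms (reflexivity, symmetry, triangle, monotonicity, separation, and the infimum axiom), the paper concluding by citing \cite[5.30]{AR} where you instead invoke closure under limits and $\kappa$-filtered colimits together with the reflective-subcategory theorem. The only blemish is your explicit formula for the reflection (set $d(x,y):=\bigwedge\{q: R_q(x,y)\}$ and quotient), which applied to an arbitrary $\Sigma$-structure need not produce a $Q$-metric space --- symmetry and the triangle inequality can fail before the relations are closed under the axioms --- but this is harmless, since reflectivity and local presentability already follow from the closure properties you establish together with \cite[1.39]{AR}.
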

\begin{proof}
$Q$-metric spaces can be seen as sets equipped with binary relations
$R_q$, $q\in Q$ such that
\begin{enumerate}
\item $(\forall x,y)(R_0(x,y)\leftrightarrow x=y)$,
\item $(\forall x,y)(R_q(x,y)\rightarrow R_q(y,x))$ for all $q\in Q$, 
\item  $(\forall x,y)(R_p(x,y)\rightarrow R_q(x,y))$ for all $p\leq q$
\item $(\forall x,y,z)(R_p(x,z)\wedge R_q(z,y)\rightarrow R_{p+q}(x,y))$, for all $p,q\in Q$, and
\item $(\forall x,y) (\bigwedge\limits_{j\in J} R_{q_j}(x,y)\rightarrow R_q(x,y))$ where $q=\bigwedge\limits_{j\in J}q_j$.
\end{enumerate}
The result follows from \cite[5.30]{AR}.
\end{proof}

\begin{rem}\label{lp}
Following \cite[5.30]{AR}, $Q$-$\Met$ is locally $\lambda$-presentable whenever $\lambda>|Q|$. It even suffices that $\lambda>|S|$ where $S$ is a set of elements $s>0$ such that $0=\bigwedge S$. Then $q=\bigwedge\limits_{s\in S} q+s$ and we only use the corresponding conjunctions in (5).

Directed colimits of $Q$-metric spaces are calculated in the same way
as in metric spaces (see \cite[2.4]{AR1}): given a directed diagram $\{K_i\}_i$, the cocone $k_i:K_i\to K$ is colimiting if and only if it is jointly surjective and 
$$
d(k_i(x),k_i(y))=\bigwedge\limits_{j\geq i}d(k_{ij}(x),k_{ij}(y)).
$$
\end{rem}

A map $f:X\to Y$ is an \textit{isometry} if $d(fx,fy)=d(x,y)$ for all $x,y\in X$. Following \ref{lp}, isometries are closed under directed colimits in ($Q$-$\Met)^\to$.

\begin{rem}
	A key property of value quantales is that for every $q\succ 0$ there is 
	$p\succ 0$ such that $q\succ 2p$ (see \cite[2.9]{F}). This makes possible
	to define a topology on a $Q$-metric space $(X,d)$ as follows (see \cite[4.1]{F}). For $x\in X$ and $q\succ 0$, the \textit{open ball} $B_q(x)$ is the set $\{y\in X|q\succ d(x,y)\}$. Now, a set $U\subseteq X$ is \textit{open} if for every $x\in U$ there is $q\succ 0$ such that $B_q(x)\subseteq U$.
\end{rem}

A nonexpanding map $f:X\to Y$ will be called \textit{dense} if for every
$y\in Y$ and every $q\succ 0$ there is $x\in X$ such that $q\succ d(fx,y)$.
These are precisely maps dense in the induced topologies. We have 
an enriched and proper factorization system (dense, closed isometry) in
$Q$-$\Met$. Moreover, the class $\ce$ of dense maps is an injectivity
class in $(Q$-$\Met)^\to$ with respect to maps
\begin{center}
			\begin{tikzpicture}[baseline=(current  bounding  box.south), scale=2]
				
				\node (a'0) at (0,0.8) {$0$};
				\node (b0) at (1.1,0.8) {$1$};
				\node (c0) at (0,0) {$1$};
				\node (d0) at (1.1,0) {$2_q$};
				
				\path[font=\scriptsize]

				(a'0) edge [->] node [above] {$!$} (b0)
				(b0) edge [->] node [right] {$d=i_1$} (d0)
				(c0) edge [->] node [below] {$y=i_0$} (d0)
				(a'0) edge [->] node [left] {$!$} (c0);
			\end{tikzpicture}	
		\end{center}
		for every $q\succ 0$. Here $2_q$ is the $Q$-metric space with two points of distance $q$ (see \ref{ex-ass}(2)).

\begin{rem}\label{sat-assmpt}
	It follows that $Q$-$\Met$ satisfies Assumption~\ref{assumption} for the regular cardinal $\lambda$ given by Remark~\ref{lp}, for $\ce$ = dense and $\cm$ = closed isometry. Moreover $\ce$ contains the surjections.
\end{rem}

Given a $Q$-$\Met$ enriched category $\ck$ and morphisms $f,f':K\to L$, we write $f\sim_q f'$ for $q\succ 0$ if $d(f,f')\leq q$. A $q$-commutative square
\begin{center}
	\begin{tikzpicture}[baseline=(current  bounding  box.south), scale=2]
		
		\node (a'0) at (0,0.8) {$A$};
		\node (b0) at (0.9,0.8) {$B$};
		\node (c0) at (0,0) {$K$};
		\node (d0) at (0.9,0) {$L$};
		
		\path[font=\scriptsize]

		(a'0) edge [->] node [above] {$g$} (b0)
		(b0) edge [->] node [right] {$v$} (d0)
		(c0) edge [->] node [below] {$f$} (d0)
		(a'0) edge [->] node [left] {$u$} (c0);
	\end{tikzpicture}	
\end{center}
is a square such that $fu\sim_q vg$. In this setting the notion of $q$-pushout that we introduce below is relevant: this will play a key role in the proof of Proposition~\ref{equ-purities}. Like in \cite{AR1}, $q$-pushouts can be seen as weighted ($\aleph_1$-small) colimits.

\begin{defi}
	Let $q\succ 0$. A $q$-commutative square
	\begin{center}
		\begin{tikzpicture}[baseline=(current  bounding  box.south), scale=2]
			
			\node (a'0) at (0,0.8) {$A$};
			\node (b0) at (0.9,0.8) {$B$};
			\node (c0) at (0,0) {$C$};
			\node (d0) at (0.9,0) {$D$};
			
			\path[font=\scriptsize]

			(a'0) edge [->] node [above] {$f$} (b0)
			(b0) edge [->] node [right] {$\bar g$} (d0)
			(c0) edge [->] node [below] {$\bar f$} (d0)
			(a'0) edge [->] node [left] {$g$} (c0);
		\end{tikzpicture}	
	\end{center}
	is called an $q$-\textit{pushout} if for every $q$-commutative square
	\begin{center}
		\begin{tikzpicture}[baseline=(current  bounding  box.south), scale=2]
			
			\node (a'0) at (0,0.8) {$A$};
			\node (b0) at (0.9,0.8) {$B$};
			\node (c0) at (0,0) {$C$};
			\node (d0) at (0.9,0) {$D'$};
			
			\path[font=\scriptsize]

			(a'0) edge [->] node [above] {$f$} (b0)
			(b0) edge [->] node [right] {$g'$} (d0)
			(c0) edge [->] node [below] {$f'$} (d0)
			(a'0) edge [->] node [left] {$g$} (c0);
		\end{tikzpicture}	
	\end{center}
	there is a unique morphism $t:D\to D'$ such that $t\overline{f}=f'$ and $t\overline{g}=g'$.
\end{defi}

Let us now consider the notions of purity induced in this setting. We start with the barely $(\lambda,\ce)$-pure morphisms of Definition~\ref{barely-pure-def}. Given $f\colon K\to L$ in a $Q$-$\Met$ category $\ck$ and $g\colon A\to B$ in $\ck_\lambda$, the object $\cq(g,f)$ of Notation~\ref{notation} is the sub-$Q$-metric space of $\ck(A,K)$ consisting of those $u:A\to K$ such that for every $q\succ 0$ there are $u':A\to K$ and $v:B\to L$ such that $u\sim_{q}u'$ and $fu'=vg$. Arguing as in Example~\ref{ex2} we obtain that $f:K\to L$ is {\em barely $(\lambda,\ce)$-pure} if for every $q\succ 0$ and every commutative square 
\begin{center}
	\begin{tikzpicture}[baseline=(current  bounding  box.south), scale=2]
		
		\node (a'0) at (0,0.8) {$A$};
		\node (b0) at (0.9,0.8) {$B$};
		\node (c0) at (0,0) {$K$};
		\node (d0) at (0.9,0) {$L$};
		
		\path[font=\scriptsize]

		(a'0) edge [->] node [above] {$g$} (b0)
		(b0) edge [->] node [right] {$v$} (d0)
		(c0) edge [->] node [below] {$f$} (d0)
		(a'0) edge [->] node [left] {$u$} (c0);
	\end{tikzpicture}	
\end{center}
with $A$ and $B$ $\lambda$-presentable, there exists $t:B\to K$ such that $tg\sim_q u$. Therefore this generalizes the notion of barely $\lambda$-ap-pure morphism of \cite{RT}.

The $(\lambda,\ce)$-pure morphisms of Definition~\ref{pure-def1} do not correspond directly to a notion of purity studied in the past, as pointed out in Example~\ref{ex1}(2) for $\cv=\CMet$. Given $f\colon K\to L$ and $g\colon A \to B$ in a $Q$-$\Met$ category $\ck$, the object $\cp(g,f)$ of Notation~\ref{strong-pure} is the sub-$Q$-metric space of $\ck(A,K)$ consisting of those $u:A\to K$ such that for every $q\succ 0$ there exists $v:B\to L$ for which $fu\sim_{q}vg$. Then $f$ is {\em $\ce$-pure with respect to $g$} if and only if for any $u\in \cp(f,g)$ as above and for any $q\succ 0$, there exists $t\colon B\to K$ such that $tg\sim_{q}u$.

A third notion that we could consider is a direct generalization to this setting of the weakly $\lambda$-ap pure morphisms of \cite{RT}. We say that a morphism $f:K\to L$ 
in a $Q$-$\Met$ category $\ck$ is {\em weakly $(\lambda,\ce)$-pure} if and only if for every $q\succ 0$
and every $q$-commutative square
\begin{center}
	\begin{tikzpicture}[baseline=(current  bounding  box.south), scale=2]
		
		\node (a'0) at (0,0.8) {$A$};
		\node (b0) at (0.9,0.8) {$B$};
		\node (c0) at (0,0) {$K$};
		\node (d0) at (0.9,0) {$L$};
		
		\path[font=\scriptsize]

		(a'0) edge [->] node [above] {$g$} (b0)
		(b0) edge [->] node [right] {$v$} (d0)
		(c0) edge [->] node [below] {$f$} (d0)
		(a'0) edge [->] node [left] {$u$} (c0);
	\end{tikzpicture}	
\end{center}
with $A$ and $B$ $\lambda$-presentable, there exists $t:B\to K$ such that $tg\sim_{2q} u$. 		

These three notions are actually all equivalent, as we see in the result below that extends \cite[4.2]{R}.

\begin{propo}\label{equ-purities}
Let $\ck$ be a locally $\lambda$-presentable category enriched over $Q$-$\Met$ and $f\colon K\to L$ be a morphism in $\ck$. The following are equivalent:\begin{enumerate}
	\item $f$ is weakly $(\lambda,\ce)$-pure;
	\item $f$ is $(\lambda,\ce)$-pure;
	\item $f$ is barely $(\lambda,\ce)$-pure.
\end{enumerate}
\end{propo}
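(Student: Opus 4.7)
The plan is to close the cycle $(1) \Rightarrow (2) \Rightarrow (3) \Rightarrow (1)$. The step $(2) \Rightarrow (3)$ is immediate from Lemma~\ref{pure-bpure}, since the factorization system (dense, closed isometry) on $Q$-$\Met$ is proper.

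For $(1) \Rightarrow (2)$, the key input is the value-quantale property recalled above: for every $q \succ 0$ there is $p \succ 0$ with $q \succ 2p$. Given $u \in \cp(g, f)$ and $q \succ 0$, first choose such a $p$; the definition of $\cp(g, f)$ supplies $v\colon B \to L$ with $fu \sim_p vg$, and weak purity of $f$ applied to this $p$-commutative square produces $t\colon B \to K$ with $tg \sim_{2p} u$. Since $q \succ 2p$ this upgrades to $tg \sim_q u$, establishing density of the map $r$ of Notation~\ref{strong-pure}, i.e.\ that $f$ is $(\lambda,\ce)$-pure.

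The main obstacle is $(3) \Rightarrow (1)$, because barely $(\lambda,\ce)$-purity only supplies lifts for \emph{strictly} commutative squares whereas we need a lift for a merely $q$-commutative one. My plan is to use the $q$-pushouts introduced earlier in this section to convert such a $q$-commutative square into a strictly commutative one, at the cost of enlarging the indexing pair $g$. Given $fu \sim_q vg$ with $A, B \in \ck_\lambda$, I first factor $u$ as $xu_0$ through some $K_0 \in \ck_\lambda$ using local presentability of $\ck$, and then form the $q$-pushout $D$ of $g$ and $u_0$ in $\ck$, with structure maps $\bar{g}\colon K_0 \to D$ and $\bar{u}\colon B \to D$. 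The object $D$ lies in $\ck_\lambda$ since it is a weighted colimit of $\lambda$-presentable objects indexed by a $\lambda$-presentable weight (the weight involves the finite space $2_q \in \cv_\lambda$). Using that $(fx, v)$ forms a $q$-commutative cocone on $L$, because $fxu_0 = fu \sim_q vg$, the universal property of $D$ produces $w \colon D \to L$ with $w\bar{g} = fx$ and $w\bar{u} = v$. Hence the square with top $\bar{g}\colon K_0 \to D$, sides $x$ and $w$, and bottom $f$ commutes strictly and has $\bar{g} \in \ck_\lambda^{\to}$. Applying barely $(\lambda,\ce)$-purity to this strict square with tolerance $q$ yields $t\colon D \to K$ with $t\bar{g} \sim_q x$. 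Setting $\tilde{t} := t\bar{u}\colon B \to K$, a triangle-inequality estimate combining $t\bar{g} \sim_q x$, the $q$-commutativity $\bar{u}g \sim_q \bar{g}u_0$ of the $q$-pushout, and the non-expansiveness of composition in a $Q$-$\Met$-category yields
\[
d(\tilde{t}g, u) \;\leq\; d(t\bar{u}g,\, t\bar{g}u_0) + d(t\bar{g}u_0,\, xu_0) \;\leq\; q + q \;=\; 2q,
\]
so $\tilde{t}g \sim_{2q} u$, completing the cycle.
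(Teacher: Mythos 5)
Your proof is correct and follows essentially the same route as the paper: $(2)\Rightarrow(3)$ via Lemma~\ref{pure-bpure}, $(1)\Rightarrow(2)$ via the halving property $q\succ 2p$ of value quantales (which the paper leaves implicit in its ``straightforward''), and $(3)\Rightarrow(1)$ by using a $q$-pushout to rigidify the $q$-commutative square before invoking bare purity. The only cosmetic difference is that you push out $g$ along a factorization $u_0$ of $u$ through a $\lambda$-presentable $K_0$, whereas the paper pushes out along $\id_A$ (i.e.\ your construction with $K_0=A$, $u_0=\id_A$); since $A$ is already $\lambda$-presentable the extra factorization is unnecessary, but harmless.
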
	
\begin{proof}
	$(1)\Rightarrow (2)$ is straightforward since every $u\in\cp(g,f)$ can be completed to a $q$-commutative square as in the definition of weak $(\lambda,\ce)$-purity, while $(2)\Rightarrow (3)$ is given by Lemma~\ref{pure-bpure}.
	
$(3)\Rightarrow (1)$. Let $f$ be barely $(\lambda,\ce)$-pure and consider a 
$q$-commutative square
\begin{center}
	\begin{tikzpicture}[baseline=(current  bounding  box.south), scale=2]
		
		\node (a'0) at (0,0.8) {$A$};
		\node (b0) at (0.9,0.8) {$B$};
		\node (c0) at (0,0) {$K$};
		\node (d0) at (0.9,0) {$L$};
		
		\path[font=\scriptsize]

		(a'0) edge [->] node [above] {$g$} (b0)
		(b0) edge [->] node [right] {$v$} (d0)
		(c0) edge [->] node [below] {$f$} (d0)
		(a'0) edge [->] node [left] {$u$} (c0);
	\end{tikzpicture}	
\end{center}
with $A$ and $B$ $\lambda$-presentable. Consider a $q$-pushout as below;
\begin{center}
	\begin{tikzpicture}[baseline=(current  bounding  box.south), scale=2]
		
		\node (a'0) at (0,0.8) {$A$};
		\node (b0) at (0.9,0.8) {$B$};
		\node (c0) at (0,0) {$A$};
		\node (d0) at (0.9,0) {$C$};
		
		\path[font=\scriptsize]

		(a'0) edge [->] node [above] {$g$} (b0)
		(b0) edge [->] node [right] {$g_q$} (d0)
		(c0) edge [->] node [below] {$\overline g$} (d0)
		(a'0) edge [->] node [left] {$\id_A$} (c0);
	\end{tikzpicture}	
\end{center}
then there is a unique morphism $t:C\to L$ such that $t\overline{g}=fu$ and
$tg_q=v$. Thus we get the following commutative square
\begin{center}
	\begin{tikzpicture}[baseline=(current  bounding  box.south), scale=2]
		
		\node (a'0) at (0,0.8) {$A$};
		\node (b0) at (0.9,0.8) {$C$};
		\node (c0) at (0,0) {$K$};
		\node (d0) at (0.9,0) {$L$};
		
		\path[font=\scriptsize]

		(a'0) edge [->] node [above] {$\overline g$} (b0)
		(b0) edge [->] node [right] {$t$} (d0)
		(c0) edge [->] node [below] {$f$} (d0)
		(a'0) edge [->] node [left] {$u$} (c0);
	\end{tikzpicture}	
\end{center}
and, since $\lambda$-presentable objects are closed under weighted finite colimits, $C$ is $\lambda$-presentable and thus there exists $w:C\to K$ such that $w\overline{g}\sim_q u$. Hence
$$
wg_q g\sim_q w\overline{g}\sim_q u
$$
and $wg_q g\sim_{2q } u$, showing that $f$ is weakly $(\lambda,\ce)$-pure.  
\end{proof}

As a consequence we can characterize $(\lambda,\ce)$-injectivity classes:

\begin{theo}
	The $(\lambda,\ce)$-injectivity classes in a locally $\lambda$-presentable $Q$-$\Met$ category $\ck$ are precisely classes closed under $\lambda$-filtered colimits, products, and $(\lambda,\ce)$-pure subobjects.
\end{theo}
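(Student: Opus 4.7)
The plan is to obtain this as a direct consequence of Theorem~\ref{char-theo-1} applied to $\cv = Q\text{-}\Met$ with the factorization system $\ce=$ dense and $\cm=$ closed isometry, and with the collection $\cg := \{I\}$ consisting of the tensor unit. The necessity of the three closure conditions follows from Proposition~\ref{closure1} (closure under products comes from the fact that $\ce$, being itself an injectivity class in $(Q\text{-}\Met)^\to$, is closed under products, hence is $\ce$-stable). So the work is entirely on the sufficiency side, and it reduces to checking the hypotheses.

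First I would verify Assumption~\ref{assumption}. Properness and enrichment of the (dense, closed isometry) factorization system were recorded just before the theorem. Closure of $\cm$ under $\lambda$-filtered colimits in the arrow category is exactly the remark after \ref{lp} (directed colimits are computed pointwise and preserve the isometry and closedness conditions). The third bullet of Assumption~\ref{assumption}, that $\ce$ arises as an injectivity class in $(Q\text{-}\Met)^\to$ with respect to maps $!_X \to d$ with $X\in\cv_\lambda$, is supplied by the family of squares
\begin{center}
\begin{tikzpicture}[baseline=(current  bounding  box.south), scale=2]
\node (a'0) at (0,0.8) {$0$};
\node (b0) at (1.1,0.8) {$1$};
\node (c0) at (0,0) {$1$};
\node (d0) at (1.1,0) {$2_q$};
\path[font=\scriptsize]
(a'0) edge [->] node [above] {$!$} (b0)
(b0) edge [->] node [right] {$i_1$} (d0)
(c0) edge [->] node [below] {$i_0$} (d0)
(a'0) edge [->] node [left] {$!$} (c0);
\end{tikzpicture}
\end{center}
for all $q\succ 0$, as noted in Remark~\ref{sat-assmpt}.

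Next I would dispatch condition (i) of Theorem~\ref{char-theo-1}: this is precisely the content of Proposition~\ref{equ-purities}, which identifies the barely $(\lambda,\ce)$-pure and the $(\lambda,\ce)$-pure morphisms in any locally $\lambda$-presentable $Q$-$\Met$-category. For condition (ii), I take $\cg = \{I\}$. Powers by $I$ are (naturally isomorphic to) the identity, so they are trivially $\ce$-stable. And since $\cv_0(I,e)$ is just the underlying function of a morphism $e$ in $Q$-$\Met$, its surjectivity forces $e$ to be surjective as a map of sets, which in turn implies that $e$ is dense in the induced topology; thus $e\in\ce$, as required. With all hypotheses in place, Theorem~\ref{char-theo-1} gives the desired characterization.

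There is no real obstacle here — all the substantive work has been done upstream. The only place that could in principle have caused trouble is the coincidence of the two notions of purity, but this has already been handled by the $q$-pushout argument of Proposition~\ref{equ-purities}, which exploits closure of $\ck_\lambda$ under finite weighted colimits to reduce barely $(\lambda,\ce)$-purity to weak $(\lambda,\ce)$-purity and hence to $(\lambda,\ce)$-purity.
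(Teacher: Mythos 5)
Your proposal is correct and follows essentially the same route as the paper: both obtain the result by applying Theorem~\ref{char-theo-1}, citing Remark~\ref{sat-assmpt} for Assumption~\ref{assumption}, Proposition~\ref{equ-purities} for the coincidence of the two purity notions, and taking $\cg=\{1\}$ (the unit) since surjections are dense. Your write-up merely spells out the individual verifications in more detail than the paper does.
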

\begin{proof}
	This is a consequence of Theorem~\ref{char-theo-1} since Assumption~\ref{assumption} is satisfied by Remark~\ref{sat-assmpt}, the notions of purity coincide by Proposition~\ref{equ-purities} above, and we can choose $\cg:=\{1\}$ since surjections are dense.
\end{proof}

\section{Purity over $\omega$-complete posets}\label{omega-CPO}

We fix as the base of enrichment the cartesian closed category  $\omega$-$\CPO$  of posets with joins of non-empty $\omega$-chains and maps preserving joins of such (as in Example~\ref{ex-ass}(4)). This is locally $\aleph_1$-presentable as a closed category where $\aleph_1$-presentable objects are countable cpo's, so we consider $\lambda\geq\aleph_1$.

The factorization system we consider is given by (dense, embedding) as in Example~\ref{ex-ass}(4). Note that this is a proper factorization system, it is enriched (if $e$ is dense, then so is also $X\times e$ for any $X\in\omega$-$\CPO$), and the class of embeddings is closed under $\aleph_1$-filtered colimits in $\omega$-$\CPO^\to$. Thus Assumption~\ref{assumption} is satisfied.

Following Notation~\ref{strong-pure} for $f\colon K\to L$ and $g\colon A\to B$ the object $\cp(f,g)$ consists of those $u\colon A\to K$ for which there exists $(v_i\colon B\to L)_{i\in\omega}$ such that $(v_ig)_{i\in\omega}$ is a chain and $fu=\bigvee v_ig$. (Note that this does not mean that $fu=vg$, for some $v$, because $(v_i)_{i\in\omega}$ does not need to be a chain.) It follows that $f\colon K\to L$ is {\em $\ce$-pure} with respect to $g\colon A\to B$ if:
\vspace{5pt} 
\begin{center}\em
	 for any $u\colon A\to K$ together with a family $(v_i\colon B\to L)_{i\in\omega}$\\ such that $(v_ig)_{i\in\omega}$ is a chain and $fu=\bigvee v_ig$,\\ then\\ there exists $(t_j\colon B\to K)_{j\in\omega}$ such that $(t_jg)_j$ is a chain and $u=\bigvee t_jg$.
\end{center}
\vspace{5pt}
Similarly, following Notation~\ref{notation} for $f\colon K\to L$ and $g\colon A\to B$ as above, the object $\cq(f,g)$ consists of those $u\colon A\to K$ for which there exists $(u_i\colon A\to K,v_i\colon B\to L)_{i\in\omega}$ such that $(u_i)_{i\in\omega}$ is a chain, $fu_i=v_ig$, and $u= \bigvee u_i$. It follows that $f\colon K\to L$ is {\em barely $\ce$-pure} with respect to $g\colon A\to B$ if: 
\vspace{5pt}
\begin{center}\em
	for any $u\colon A\to K$ together with a family $(u_i\colon A\to K,v_i\colon B\to L)_{i\in\omega}$,\\ such that $(u_i)_{i\in\omega}$ is a chain, $fu_i=v_ig$, and $u= \bigvee u_i$,\\ then\\ there exists $(t_j\colon B\to K)_{j\in\omega}$ such that $(t_jg)_j$ is a chain and $u=\bigvee t_jg$.
\end{center}
\vspace{5pt}

As in the previous section, we shall prove that $(\lambda,\ce)$-pure and barely $(\lambda,\ce)$-pure morphisms coincide by showing that that correspond to a third (weaker) notion of purity. Before achieving that, we need to introduce a generalized notion of pushout:

\begin{defi}
	The {\em $\omega$-pushout} of a map $g\colon A\to B$ along $f\colon A\to C$ in a $\omega$-$\CPO$-category $\ck$ is an object $D$ together with maps $\bar g\colon C\to D$ and $\bar f_i\colon B\to D$, for $i\in \omega$, such that: \begin{itemize}
		\item the composites $(\bar f_i  g)_{i\in\omega}$ define a chain in $\ck(A,D)$,
		\item $\bar g f=\bigvee \bar f_i g$,
	\end{itemize}
	and which is universal among the tuples $(E,h\colon C\to E,(k_i\colon B\to E)_{i\in \omega})$ with the above properties: for any such tuple there exists a unique map $v\colon D\to E$ with $h= v\bar g$ and $k_i= v\bar f_i$ for any $i\in\omega$.
\end{defi}

\begin{rem}
	The $\omega$-pushout defined above can be compute as an $\aleph_1$-small weighted colimit in $\ck$. Indeed, consider the free $\omega$-$\CPO$-category $\cc$
	on the category 
	\begin{center}
		\begin{tikzpicture}[baseline=(current  bounding  box.south), scale=2]
			
			\node (a'0) at (0,0.8) {$\star_1$};
			\node (b0) at (0.9,0.8) {$\star_2$};
			\node (c0) at (0,0) {$\star_3$};
			
			\path[font=\scriptsize]

			(a'0) edge [->] node [above] {} (b0)
			(a'0) edge [->] node [left] {} (c0);
		\end{tikzpicture}	
	\end{center}
	and the weight $M\colon\cc^{\op}\to\omega$-$\CPO$ defined by sending the opposite of the diagram above to
	\begin{center}
		\begin{tikzpicture}[baseline=(current  bounding  box.south), scale=2]

			\node (b0) at (1,0.8) {$\omega\cdot 1$};
			\node (c0) at (0,0) {$1$};
			\node (d0) at (1,0) {$\omega+1$};
			
			\path[font=\scriptsize]

			(b0) edge [->] node [right] {$d$} (d0)
			(c0) edge [->] node [below] {$y$} (d0);
		\end{tikzpicture}	
	\end{center}
	with the same notations we used in Example~\ref{ex-ass}(4) to define the dense maps as an injectivity class.
	
	Then, to give $g\colon A\to B$ and $f\colon A\to C$ in $\ck$ is the same as giving an enriched functor $H\colon\cc\to\ck$, and it is easy to see that the $\omega$-pushout of $g$ along $f$ coincides with the weighted colimit $M*H$.
\end{rem}

We are now ready to prove the two notions of purity coincide (like
in \ref{equ-purities}):

\begin{propo}\label{equ-purities-CPO}
	Let $\ck$ be a locally $\lambda$-presentable category enriched over $\omega$-$\CPO$ and $f\colon K\to L$ be a morphism in $\ck$. The following are equivalent:\begin{enumerate}
		\item $f$ is $(\lambda,\ce)$-pure;
		\item $f$ is barely $(\lambda,\ce)$-pure;
		\item for any $g\colon A\to B$ between $\lambda$-presentable objects and any commutative square
		\begin{center}
			\begin{tikzpicture}[baseline=(current  bounding  box.south), scale=2]
				
				\node (a'0) at (0,0.8) {$A$};
				\node (b0) at (0.9,0.8) {$B$};
				\node (c0) at (0,0) {$K$};
				\node (d0) at (0.9,0) {$L$};
				
				\path[font=\scriptsize]

				(a'0) edge [->] node [above] {$g$} (b0)
				(b0) edge [->] node [right] {$v$} (d0)
				(c0) edge [->] node [below] {$f$} (d0)
				(a'0) edge [->] node [left] {$u$} (c0);
			\end{tikzpicture}	
		\end{center}
		There exists $(t_i\colon B\to K)_{i\in\omega}$ such that $(t_ig)_{i\in\omega}$ is a chain and $u=\bigvee t_ig$.
	\end{enumerate}
\end{propo}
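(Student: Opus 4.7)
The plan is to prove the cycle $(1) \Rightarrow (2) \Rightarrow (3) \Rightarrow (1)$, adapting the $q$-pushout argument of Proposition~\ref{equ-purities} to the $\omega$-pushout setting. The implication $(1) \Rightarrow (2)$ follows from Lemma~\ref{pure-bpure}, since the (dense, embedding) factorization system on $\omega$-$\CPO$ is proper (dense maps are epi because two morphisms agreeing on the image agree on its $\omega$-chain closure, and embeddings are mono). For $(2) \Rightarrow (3)$, given a strictly commutative square $fu = vg$ with $A, B$ $\lambda$-presentable, the constant choices $u_i := u$ and $v_i := v$ place $u \in \cq(g,f)$; the density of $q \colon \ck(B,K) \to \cq(g,f)$ granted by $(2)$, combined with the characterization of dense maps as an injectivity class from Example~\ref{ex-ass}(4), then yields a chain $(t_n)$ in $\ck(B,K)$ with $u = \bigvee t_n g$.

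The crux is $(3) \Rightarrow (1)$. Given $u \in \cp(g,f)$, so that $fu = \bigvee v_n g$ for some chain $(v_n g)_n$, I form the $\omega$-pushout of $g \colon A \to B$ along $\id_A \colon A \to A$: this produces $C$ together with $\bar g \colon A \to C$ and $\bar f_n \colon B \to C$ such that $(\bar f_n g)_n$ is a chain in $\ck(A,C)$ and $\bar g = \bigvee \bar f_n g$. Since $\omega$-pushouts are $\aleph_1$-small weighted colimits (by the remark preceding the proposition) and $\lambda \geq \aleph_1$, the object $C$ is $\lambda$-presentable. The universal property applied to the cocone $(fu, v_n)$ produces a unique $t \colon C \to L$ with $t \bar g = fu$ and $t \bar f_n = v_n$, turning the input data into a strictly commutative square $fu = t \bar g$. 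Applying $(3)$ to this square yields $(s_k \colon C \to K)_k$ such that $(s_k \bar g)_k$ is a chain in $\ck(A,K)$ and $u = \bigvee s_k \bar g$. Since left-composition by $s_k$ preserves $\omega$-chain joins (by $\omega$-$\CPO$-enrichment of $\ck$),
\[
s_k \bar g \;=\; s_k\!\left(\bigvee_n \bar f_n g\right) \;=\; \bigvee_n (s_k \bar f_n) g,
\]
so $u = \bigvee_k \bigvee_n (s_k \bar f_n) g$ lies in the closure of $\{t g : t \in \ck(B,K)\}$ under $\omega$-chain joins inside $\ck(A,K)$. Since $u \in \cp(g,f)$ was arbitrary and this closure is contained in $\cp(g,f)$, the two agree; that is, $r \colon \ck(B,K) \to \cp(g,f)$ is dense, which is $(1)$.

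The main obstacle I anticipate is the non-trivial content hidden in Example~\ref{ex-ass}(4): the passage between the closure-theoretic notion of density and the concrete statement that every element of the codomain is a \emph{single} chain-join of images. This is the $\omega$-$\CPO$ counterpart of the factor-of-$2$ absorption in the metric setting of Proposition~\ref{equ-purities}, and it is what bridges the abstract factorization-system formulation of (1) and (2) with the hands-on lifting condition (3). A minor subsidiary point is the verification that the $\omega$-pushout $C$ is $\lambda$-presentable, which follows directly from its description as an $\aleph_1$-small weighted colimit of $\lambda$-presentable objects.
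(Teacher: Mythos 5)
Your proof is correct and follows essentially the same route as the paper's: $(1)\Rightarrow(2)$ via Lemma~\ref{pure-bpure}, $(2)\Rightarrow(3)$ by feeding the constant family into bare purity, and $(3)\Rightarrow(1)$ via the $\omega$-pushout of $g$ along $1_A$ and its universal property. The only (cosmetic) divergence is at the very end, where the paper collapses the double join $\bigvee_k\bigvee_n (s_k\bar f_n)g$ into a single chain by a diagonal argument to match the single-chain formulation of purity, whereas you conclude that $r$ is dense directly from the closure-theoretic definition of density; both are adequate.
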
	
\begin{proof}
	$(1)\Rightarrow(2)$ follows from Lemma~\ref{pure-bpure}, and $(2)\Rightarrow(3)$ is trivial since it is enough to apply the bare $(\lambda,\ce)$-purity of $f$ to the constant family $(u_i,v_i)=(u,v)$.
	
	$(3)\Rightarrow(1)$. Following the definition of $(\lambda,\ce)$-purity, we need to consider $g:A\to B$ between $\lambda$-presentable objects, a map $u:A\to K$, and a family $v_i\colon B\to L$ such that $v_ig$ is a chain and $fu=\bigvee v_ig$.
	
	Consider now the $\omega$-pushout of $g$ along $1_A$ in $\ck$; this is an object $D$ together with maps $\bar g\colon A\to D$ and $ h_i\colon B\to D$, for $i\in \omega$, such that $(h_i  g)_{i\in\omega}$ is a chain and $\bar g =\bigvee h_i g$. By the universal property of the $\omega$-pushout applied to the tuple $(L,fu,(v_i)_{i\in\omega})$ there exists a map $\bar v\colon D\to L$ making the square
	\begin{center}
		\begin{tikzpicture}[baseline=(current  bounding  box.south), scale=2]
			
			\node (a'0) at (0,0.8) {$A$};
			\node (b0) at (0.9,0.8) {$D$};
			\node (c0) at (0,0) {$K$};
			\node (d0) at (0.9,0) {$L$};
			
			\path[font=\scriptsize]

			(a'0) edge [->] node [above] {$\bar g$} (b0)
			(b0) edge [->] node [right] {$\bar v$} (d0)
			(c0) edge [->] node [below] {$f$} (d0)
			(a'0) edge [->] node [left] {$u$} (c0);
		\end{tikzpicture}	
	\end{center}
	commute. Now, by applying (3) to $\bar g$ (which is still a map between $\lambda$-presentable objects because $D$ is an $\aleph_1$-small weighted colimit of $\lambda$-presentable objects) we find a family $t_i\colon D\to K$, for $i\in\omega$, such that $(t_i\bar g)_i$ is a chain and $u=\bigvee t_i\bar g$. It follows that
	$$ u=\bigvee_i t_i\bar g = \bigvee_i t_i(\bigvee_j h_j g)= \bigvee_i \bigvee_j  (t_i h_j) g = \bigvee_i (t_ih_i) g$$
	where the last equality is given by the standard diagonal argument for joins of chains in posets. In conclusion, the family $(t_ih_i)_{i\in\omega}$ witnesses the $(\lambda,\ce)$-purity of $f$, showing (1).
\end{proof}

As a consequence we can characterize $(\lambda,\ce)$-injectivity classes for $\omega$-$\CPO$-enriched categories:

\begin{theo}
	The $(\lambda,\ce)$-injectivity classes in a locally $\lambda$-presentable $\omega$-$\CPO$ category $\ck$ are precisely classes closed under $\lambda$-filtered colimits, products, and $(\lambda,\ce)$-pure subobjects.
\end{theo}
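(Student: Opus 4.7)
The plan is to invoke Theorem~\ref{char-theo-1} directly, mirroring the argument used for $Q$-$\Met$ at the end of Section~\ref{quantales}. This reduces the problem to checking (a) that Assumption~\ref{assumption} holds, (b) that barely $(\lambda,\ce)$-pure and $(\lambda,\ce)$-pure morphisms coincide, and (c) that there is some $\cg\subseteq\cv$ satisfying clause (ii) of Theorem~\ref{char-theo-1}.

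Items (a) and (b) are essentially already in hand. The observations made at the opening of Section~\ref{omega-CPO} establish that (dense, embedding) is a proper, enriched factorization system on $\omega$-$\CPO$ whose right class is closed under $\lambda$-filtered colimits in $\cv^\to$, while Example~\ref{ex-ass}(4) exhibits $\ce$ as the injectivity class in $\cv^\to$ of a family of maps of exactly the shape prescribed by Assumption~\ref{assumption}, with $\aleph_1$-presentable left leg (recall that we are working with $\lambda\geq\aleph_1$). For (b), Proposition~\ref{equ-purities-CPO} gives precisely the coincidence we need.

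For (c) the natural choice is $\cg:=\{1\}$, the singleton on the monoidal unit of $\omega$-$\CPO$. Powers by $1$ in any $\omega$-$\CPO$-category are naturally isomorphic to the identity, so they are trivially $\ce$-stable. It remains to verify that surjectivity of $\cv_0(1,e)$ implies $e\in\ce$; but $\cv_0(1,e)$ surjective says exactly that $e\colon X\to Y$ is surjective on underlying sets, so every $y\in Y$ lies in $e[X]$ itself, a fortiori in the closure of $e[X]$ under joins of non-empty $\omega$-chains, so $e$ is dense.

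With (a), (b), (c) in place, Theorem~\ref{char-theo-1} applies verbatim and yields the stated characterization. I do not anticipate any genuine obstacle at this point: all the real content has been absorbed into Proposition~\ref{equ-purities-CPO}, whose proof in turn relied on the $\omega$-pushout construction as an $\aleph_1$-small weighted colimit keeping us inside $\ck_\lambda$. Once that machinery is in place, the present theorem is a formal corollary of the general characterization theorem.
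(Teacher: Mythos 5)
Your proposal is correct and follows exactly the paper's own argument: the theorem is deduced from Theorem~\ref{char-theo-1} by noting that Assumption~\ref{assumption} was verified at the start of Section~\ref{omega-CPO}, that the two purity notions coincide by Proposition~\ref{equ-purities-CPO}, and that $\cg=\{1\}$ works because surjections are dense. Your additional spelled-out verifications (triviality of powers by $1$, surjectivity of $\cv_0(1,e)$ implying density) are correct but were left implicit in the paper.
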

\begin{proof}
	This is a consequence of Theorem~\ref{char-theo-1} since Assumption~\ref{assumption} is satisfied, the notions of purity coincide by Proposition~\ref{equ-purities-CPO} above, and we can choose $\cg:=\{1\}$ since surjections are dense.
\end{proof}

\section{Purity over quasivarieties}\label{quasiv}

In this section we consider enrichment over $\lambda$-quasivarieties as in \cite{LT20}; we recall the definition of symmetric monoidal $\lambda$-quasivariety below:

\begin{defi}
	Let $\cv=(\cv_0,\otimes,I)$ be a symmetric monoidal closed category. We say that $\cv$ is a {\em symmetric monoidal $\lambda$-quasivariety} if:\begin{enumerate}
		\item $\cv_0$ has a regular generator $\cp$ made of $\lambda$-presentable and regular projective objects (that is, $\cv_0$ is a $\lambda$-quasivariety);
		\item the unit $I$ is $\lambda$-presentable;
		\item if $P,Q\in\cp$ then $P\otimes Q$ is $\lambda$-presentable and regular projective.
	\end{enumerate}
\end{defi}

In particular, such a $\cv$ is also locally $\lambda$-presentable as a closed category in the sense of Kelly \cite{Kel82} (see \cite[Remark~4.15]{LT20}). Following \cite[Proposition 30]{AR2}, a regular generator as in (1) can be weakened to be just a strong generator.

Since every $\lambda$-quasivariety is a regular category, it is endowed with the factorization system $(\ce,\cm)$ where $\ce$ is the class of regular epimorphisms and $\cm$ that of monomorphisms. We consider then the notion of purity induced by this factorization system. 

\begin{exams}
	The following are all examples of symmetric monoidal finitary quasivarieties, some were already mentioned in Examples~\ref{ex1}:\begin{enumerate}
		\item $\bo{Set}$, where the induced factorization system induces the standard notion of purity;
		\item $\MGra$, the cartesian closed category of oriented multigraphs with loops;
		\item $\bo{Gra}$, the cartesian closed category of directed graphs (possibly) without loops;
		\item $\bo{Set}^G$, the cartesian closed category of $G$-sets for a finite group $G$;
		\item $\bo{Ab}$, the category of abelian groups, as well as the category $R\tx{-}\bo{Mod}$ of $R$-modules, for a commutative ring $R$;
		\item $\bo{GAb}$, the category of graded abelian groups;
		\item $\bo{DGAb}$, the category of differentially graded abelian groups.
	\end{enumerate}
	In (1),(2),(5), and (6) the unit of the monoidal structure is regular projective, so the notion of $\ce$-purity will correspond to the ordinary one by Proposition~\ref{pure-E-pure} below. This does not hold in the other examples. We will keep $\bo{DGAb}$ as our prototypical example of quasivariety with non regular projective unit, and will provide clarifications on how the various notions introduced below can be interpreted in the setting of DG-categories.
\end{exams}

To better understand the notion of $(\lambda,\ce)$-purity in this context, we need to introduce some notation. 

\begin{defi}
	Given a $\cv$-category $\ck$ and an object $P\in\cv$, we call a map $$f\colon P\to\ck(X,Y)$$ a {\em $P$-morphism} from $X$ to $Y$, and denote it by $(f,P)\colon X\to Y$. 
\end{defi}

When $P=I$ we recover the standard notion of morphism $f\colon X\to Y$ in the underlying category $\ck_0$.

\begin{exam}\label{DG}
	In the monoidal category $\bo{DGAb}$ of chain complexes, consider the chain complex $P_n$ which has $\mathbb{Z}$ in degree $n$ and $n-1$, with differential $d_n=\tx{id}$, and which is trivial in every other degree. In~\cite{NST2020cauchy} these $P_n$ are called $S^nL\mathbb{Z}$. It is easy to see that to give a morphism $P_n\to A$ in $\bo{DGAb}$ is the same as specifying an element $x\in A_n$ in degree $n$.\\
	Then, according to the nomenclature in \cite{NST2020cauchy}, a $P_n$-morphism in a DG-category $\ck$ is a protomorphism of degree $n$. It is easy to see that the set of all $P_n$'s is a regular generator of $\bo{DGAb}$ made of finitely presentable and regular projective objects (in fact, this is a dense generator by~\cite[3.10]{NST2020cauchy}).
\end{exam}

\begin{rem}
	The morphisms above were considered in \cite{W} to express every $\cv$-category as a category enriched over $[\cv^{\tx{op}},\bo{SET}]$, for an opportune universe enlargement $\bo{SET}$ of $\bo{Set}$. In particular, the definition below can be seen as part of the composition rule in $[\cv^{\tx{op}},\bo{SET}]$-categories.
\end{rem}

Morphisms and $P$-morphisms can be composed as follows:

\begin{defi}\label{PI-comp}
	Given a $P$-morphism $(u,P)\colon X\to Y$ and a morphism $g\colon Y\to Z$ in a $\cv$-category $\ck$, the composite of $(u,P)$ and $g$ is the $P$-morphism
	$$ (gu,P)\colon P\stackrel{\cong}{\longrightarrow}I\otimes P\stackrel{g\otimes u}{\xrightarrow{\hspace*{0.8cm}}}\ck(Y,Z)\otimes \ck(X,Y)\stackrel{\circ}{\longrightarrow}\ck(X,Z). $$
	Similarly, given a morphism $g\colon X\to Y$ and a $P$-morphism $(u,P)\colon Y\to Z$, the composite of $g$ and $(u,P)$ is the $P$-morphism
	$$ (ug,P)\colon P\stackrel{\cong}{\longrightarrow}P\otimes I\stackrel{u\otimes g}{\xrightarrow{\hspace*{0.8cm}}}\ck(Y,Z)\otimes \ck(X,Y)\stackrel{\circ}{\longrightarrow}\ck(X,Z). $$
	
\end{defi}

\begin{rem}
	Equivalently, in the first case, $(gu,P)$ can be defined as the composite
	$$ P\stackrel{u}{\longrightarrow}\ck(X,Y)\stackrel{\ck(X,g)}{\xrightarrow{\hspace*{0.8cm}}}\ck(X,Z). $$
	Similarly for $(ug,P)$. However, the approach given above makes clear the relationship with Remark~\ref{QI}.
\end{rem}

Thanks to this we can talk about commutative squares involving $P$-morphisms: 

\begin{defi}
	We say that the square below commutes
	\begin{center}
		\begin{tikzpicture}[baseline=(current  bounding  box.south), scale=2]
			
			\node (a0) at (0,-0.8) {$Y$};
			\node (b0) at (1,-0.8) {$Y'$};
			\node (c0) at (0,0) {$X$};
			\node (d0) at (1,0) {$X'$};
			
			\path[font=\scriptsize]
			
			(a0) edge [->] node [below] {$f$} (b0)
			(a0) edge [<-] node [left] {$(u,P)$} (c0)
			(b0) edge [<-] node [right] {$(v,P)$} (d0)
			(c0) edge [->] node [above] {$g$} (d0);
		\end{tikzpicture}	
	\end{center} 
	if $(fu,P)=(vg,P)$. 
\end{defi}

Note that, to give a commutative square as above is the same as giving a map $P\to\ck^\to(g,f)$; then $(u,P)$ and $(v,P)$ can be recovered by taking the two projections out of $\ck^\to(g,f)$.

We are now ready to give a characterization of $\ce$-purity in terms of $P$-morphisms.

\begin{propo}\label{E-pure}
	A morphism $f\colon K\to L$ in a $\cv$-category $\ck$ is $(\lambda,\ce)$-pure if and only if for any commutative square 
	\begin{center}
		\begin{tikzpicture}[baseline=(current  bounding  box.south), scale=2]
			
			\node (a0) at (0,-0.8) {$K$};
			\node (b0) at (1,-0.8) {$L$};
			\node (c0) at (0,0) {$A$};
			\node (d0) at (1,0) {$B$};
			
			\path[font=\scriptsize]
			
			(a0) edge [->] node [below] {$f$} (b0)
			(a0) edge [<-] node [left] {$(u,P)$} (c0)
			(b0) edge [<-] node [right] {$(v,P)$} (d0)
			(c0) edge [->] node [above] {$g$} (d0);
		\end{tikzpicture}	
	\end{center} 
	with $A,B\in\ck_\lambda$ and $P\in \cp$, there exists $(t,P)\colon B\to K$ such that $(tg,P)=(u,P)$.
\end{propo}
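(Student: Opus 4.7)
The plan is to unwind $(\lambda,\ce)$-purity using the fact that, in a $\lambda$-quasivariety, regular epimorphisms are detected by the regular projective generator $\cp$. More precisely, by Example~\ref{ex-ass}(5), a morphism $e$ in $\cv$ lies in $\ce$ if and only if $\cv_0(P,e)$ is surjective for every $P\in\cp$. So I will fix $g\colon A\to B$ in $\ck_\lambda$ and study when the map $r\colon\ck(B,K)\to\cp(g,f)$ from Notation~\ref{strong-pure} has the property that $\cv_0(P,r)$ is surjective for all $P\in\cp$.

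The key computation is to describe $\cv_0(P,\cp(g,f))$ concretely. Since $\cv_0(P,-)$ is a right adjoint it preserves pullbacks, and since $P$ is regular projective it also preserves regular epimorphisms; hence applying $\cv_0(P,-)$ to the factorization/pullback diagram defining $\cp(g,f)$ gives, in $\bo{Set}$, the analogous factorization/pullback of $\cv_0(P,\ck(g,K))$. Unwinding, an element of $\cv_0(P,\cp(g,L))$ is a $P$-morphism $A\to L$ of the form $(vg,P)$ for some $P$-morphism $(v,P)\colon B\to L$ (using Definition~\ref{PI-comp}), and consequently
\[
\cv_0(P,\cp(g,f))=\{(u,P)\colon A\to K \mid \exists\,(v,P)\colon B\to L\text{ with }(fu,P)=(vg,P)\}.
\]
Under these identifications, the map $\cv_0(P,r)$ sends a $P$-morphism $(t,P)\colon B\to K$ to the pair witnessed by $(tg,P)\colon A\to K$ together with $(ft,P)$.

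Combining the two steps, $\cv_0(P,r)$ is surjective if and only if for every commutative square of $P$-morphisms as in the statement there exists $(t,P)\colon B\to K$ with $(tg,P)=(u,P)$; and $f$ is $(\lambda,\ce)$-pure if and only if this holds for every $P\in\cp$ and every $g\colon A\to B$ in $\ck_\lambda$. This yields both directions of the proposition.

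The main (mild) obstacle is purely bookkeeping: checking carefully that the identification of $\cv_0(P,\cp(g,L))$ with ``$P$-morphisms of the form $(vg,P)$'' is compatible with the composition rule of Definition~\ref{PI-comp}, and that $\cv_0(P,r)$ indeed corresponds to precomposition with $g$ under these identifications. Both amount to tracing the internal composition $\ck(B,L)\otimes\ck(A,B)\to\ck(A,L)$ through $\cv_0(P,-)$, which is routine but worth spelling out once.
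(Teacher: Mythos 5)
Your proposal is correct and follows essentially the paper's own argument: both detect membership in $\ce$ by homming out of the regular projective generator $\cp$ and use that $\cv_0(P,-)$ carries the $(\ce,\cm)$-factorization to the image factorization in $\Set$, so that surjectivity of the resulting map of sets is exactly the lifting condition on squares of $P$-morphisms. The only difference is that you unwind the diagram of Notation~\ref{strong-pure} (the object $\cp(g,f)$) directly, whereas the paper unwinds Notation~\ref{notation} (the object $\cq(g,f)$), tacitly using that $(\lambda,\ce)$-pure and barely $(\lambda,\ce)$-pure coincide here because regular epimorphisms are stable under pullback; your route avoids that appeal at the cost of the extra (easy) step of identifying $\cv_0(P,\cp(g,f))$ through the pullback, which you rightly flag as the bookkeeping to be spelled out.
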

\begin{proof}
	A morphism $f\colon K\to L$ is barely $(\lambda,\ce)$-pure if and only if for any $g\colon A\to B$ in $\ck_\lambda$, the map $q\colon \ck(B,K)\to\cq(g,f)$  of Notation~\ref{notation} is a regular epimorphism. Since $\cp$ is a strong generator of regular projective objects, this holds if and only if $\cv_0(P,q)$ is a surjection of sets for any $P\in\cp$. Now, $\cq(g,f)$ is defined as the $(\ce,\cm)$ factorization of the first projection $$\pi_1\colon\ck^\to(g,f)\to\ck(A,K);$$ thus $\cv_0(P,\cq(g,f))$ is the standard image factorization of the function $\cv_0(P,\pi_1)$. It follows that $\cv_0(P,\cq(g,f))$ can be described as the set of those $P$-morphisms $(u,P)\colon A\to K$ which can be completed to a commutative square as depicted in the statement. 
	
	In conclusion, the map $\cv_0(P,q)$ is surjective if and only if for any $(u,P)$ as above there exists $(t,P)\colon B\to K$ such that $(tg,P)=(u,P)$.
\end{proof}

\begin{propo}\label{E-pure-powers}
	Let $\ck$ be a $\cv$-category. The following are equivalent for a morphism $f\colon K\to L$ in $\ck$:\begin{enumerate}
		\item $f$ is $(\lambda,\ce)$-pure.
		\item Assuming powers by $\cp$ exist in $\ck$, the morphism $P\pitchfork f$ is $\lambda$-pure in the ordinary sense for any $P\in\cp$.
		\item Assuming copowers by $\cp$ exist in $\ck$, for any commutative square 
		\begin{center}
			\begin{tikzpicture}[baseline=(current  bounding  box.south), scale=2]
				
				\node (a0) at (0,-0.8) {$K$};
				\node (b0) at (1.1,-0.8) {$L$};
				\node (c0) at (0,0) {$P\cdot A$};
				\node (d0) at (1.1,0) {$P\cdot B$};
				
				\path[font=\scriptsize]
				
				(a0) edge [->] node [below] {$f$} (b0)
				(a0) edge [<-] node [left] {$u'$} (c0)
				(b0) edge [<-] node [right] {$v'$} (d0)
				(c0) edge [->] node [above] {$P\cdot g$} (d0);
			\end{tikzpicture}	
		\end{center} 
		with $g\colon A\to B$ in $\ck_\lambda$ and $P\in \cp$, there exists $t\colon P\cdot B\to K$ such that $t\circ(P\cdot g)=u'$.
	\end{enumerate}
\end{propo}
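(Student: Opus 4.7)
The plan is to prove $(1)\Leftrightarrow(2)$ and $(1)\Leftrightarrow(3)$ as two separate equivalences, both running through Proposition~\ref{E-pure} as the common bridge. The key observation is that $P$-morphisms $A\to K$ --- that is, maps $P\to\ck(A,K)$ in $\cv_0$ --- are in natural bijection with ordinary morphisms via the defining adjunctions
$$\ck_0(A,P\pitchfork K)\cong \cv_0(P,\ck(A,K))\cong \ck_0(P\cdot A,K),$$
valid when powers (respectively copowers) by $P$ exist. Since $\cp$ is a (regular hence) strong generator of regular projectives, Proposition~\ref{E-pure} characterizes $(\lambda,\ce)$-purity of $f$ as the existence, for every $P\in\cp$ and every commutative $P$-square on $f$ with $\lambda$-presentable corners, of a lifting $P$-morphism $(t,P)\colon B\to K$ with $(tg,P)=(u,P)$.

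For $(1)\Leftrightarrow(2)$, I would fix $P\in\cp$ and $g\colon A\to B$ with $A,B\in\ck_\lambda$ (which coincides with $(\ck_0)_\lambda$ here). An ordinary commutative square $(P\pitchfork f)u=vg$ with $u\colon A\to P\pitchfork K$ and $v\colon B\to P\pitchfork L$ transports under the power adjunction, naturally in $f$ and in $g$, to an equation $(fu',P)=(v'g,P)$ of $P$-morphisms $A\to L$, where $u',v'$ are the adjuncts of $u,v$; this step uses the equivalent description of $P$-morphism composition as post-composition with $\ck(A,f)$ given in the remark after Definition~\ref{PI-comp}. Likewise, a factorization $tg=u$ with $t\colon B\to P\pitchfork K$ corresponds to a $P$-morphism $(t',P)\colon B\to K$ with $(t'g,P)=(u',P)$. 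Letting $P$ range over $\cp$ thus identifies $(2)$ verbatim with the criterion of Proposition~\ref{E-pure}.

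For $(1)\Leftrightarrow(3)$ the argument is entirely parallel, now using the copower adjunction. A commutative square $fu'=v'(P\cdot g)$ in $\ck_0$, with $u'\colon P\cdot A\to K$ and $v'\colon P\cdot B\to L$, transports to the same commutative $P$-square $(fu,P)=(vg,P)$ of $P$-morphisms; and a morphism $t\colon P\cdot B\to K$ with $t\circ(P\cdot g)=u'$ corresponds to a $P$-morphism $(t,P)$ with $(tg,P)=(u,P)$. Applying Proposition~\ref{E-pure} then yields the equivalence.

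The main technical point, common to both arguments, is the verification that composition in $\ck_0$ matches the $P$-morphism composition of Definition~\ref{PI-comp} under the adjunction; this is precisely the content of the remark following that definition, which rewrites $P$-morphism composition as post/pre-composition in $\cv$ with $\ck(A,f)$ or $\ck(g,K)$. Once this naturality is unpacked, the three conditions become a transport of the same data through natural bijections, and I do not expect any further obstacle.
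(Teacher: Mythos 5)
Your proposal is correct and follows essentially the same route as the paper: both equivalences are obtained by transporting squares and liftings through the power/copower adjunctions $\ck_0(A,P\pitchfork K)\cong \cv_0(P,\ck(A,K))\cong \ck_0(P\cdot A,K)$ and then invoking the $P$-morphism characterization of Proposition~\ref{E-pure}. The paper's own proof is exactly this transposition argument, stated slightly more tersely.
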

\begin{proof}
	Since maps $P\cdot A\to B$ in $\ck$ correspond to morphisms $P\to\ck(A,K)$ in $\cv$, to give a square as in $(3)$ is the same as giving a commutative square involving $P$-morphisms as in Proposition~\ref{E-pure} above. Therefore $(1)\Leftrightarrow (3)$ follows immediately.
	
	For $(1)\Leftrightarrow (2)$ it is again enough to notice that, acting by transposition, a square as in Proposition~\ref{E-pure} is the same as a commutative square
	\begin{center}
		\begin{tikzpicture}[baseline=(current  bounding  box.south), scale=2]
			
			\node (a0) at (0,-0.8) {$P\pitchfork K$};
			\node (b0) at (1.1,-0.8) {$P\pitchfork L$};
			\node (c0) at (0,0) {$A$};
			\node (d0) at (1.1,0) {$B$};
			
			\path[font=\scriptsize]
			
			(a0) edge [->] node [below] {$P\pitchfork f$} (b0)
			(a0) edge [<-] node [left] {$u$} (c0)
			(b0) edge [<-] node [right] {$v$} (d0)
			(c0) edge [->] node [above] {$g$} (d0);
		\end{tikzpicture}	
	\end{center} 
	and a lifting for the square in Proposition~\ref{E-pure} is the same as a lifting for the square above. Thus $(1)$ is equivalent to $P\pitchfork f$ being $\lambda$-pure in the ordinary sense for every $P\in\cp$.
\end{proof}

\begin{rem}
	It follows that our $(\omega,\ce)$-pure morphisms are the same as the $\cp$-pure morphisms of \cite[Definition~6.4]{LT20}.
\end{rem}

When $\ck$ is locally $\lambda$-presentable the first part of the statement below follows from Corollary~\ref{pure-Epure}.

\begin{propo}\label{pure-E-pure}
	Let $\ck$ be a $\cv$-category with copowers by $\cp$. Then every ordinary $\lambda$-pure morphism $f\colon K\to L$ in $\ck$ is $(\lambda,\ce)$-pure. If the unit $I$ of $\cv$ is regular projective, then every $(\lambda,\ce)$-pure morphism is ordinarily $\lambda$-pure.
\end{propo}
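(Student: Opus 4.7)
The plan is to invoke the characterizations of $(\lambda,\ce)$-purity established earlier in this section: Proposition~\ref{E-pure-powers}(3) for the forward implication, and Proposition~\ref{E-pure} together with the regular projectivity of $I$ for the converse.

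For the forward direction, suppose $f\colon K\to L$ is $\lambda$-pure in the ordinary sense. By Proposition~\ref{E-pure-powers}(3), to verify $(\lambda,\ce)$-purity it suffices, for each $g\colon A\to B$ in $\ck_\lambda$, each $P\in\cp$, and each commutative square with vertical arrows $u'\colon P\cdot A\to K$ and $v'\colon P\cdot B\to L$ satisfying $fu'=v'\circ(P\cdot g)$, to find $t\colon P\cdot B\to K$ with $t\circ(P\cdot g)=u'$. Since $P\in\cp\subseteq\cv_\lambda$ and $\ck_\lambda$ is closed in $\ck$ under copowers by objects of $\cv_\lambda$, the copowers $P\cdot A$ and $P\cdot B$ lie in $\ck_\lambda$; as $I$ is $\lambda$-presentable, these are also $\lambda$-presentable in the ordinary sense, so the ordinary $\lambda$-purity of $f$ supplies the desired $t$.

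For the converse, assume $I$ is regular projective and $f$ is $(\lambda,\ce)$-pure. Fix $g\colon A\to B$ in $\ck_\lambda$ together with an ordinary commutative square $u\colon A\to K$, $v\colon B\to L$, $fu=vg$; the task is to produce $t\colon B\to K$ with $tg=u$. By Proposition~\ref{E-pure} and its proof, $(\lambda,\ce)$-purity of $f$ at $g$ is equivalent to the map $q\colon\ck(B,K)\to\cq(g,f)$ of Notation~\ref{notation} being a regular epimorphism in $\cv$. Since $I$ is regular projective, $\cv_0(I,q)$ is a surjection of sets; moreover $\cv_0(I,-)$ preserves monomorphisms (being a right adjoint) as well as regular epimorphisms, and hence (regular epi, mono)-factorizations. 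Therefore $\cv_0(I,\cq(g,f))$ identifies with the set of those $u'\colon A\to K$ admitting some $v'\colon B\to L$ with $fu'=v'g$, and surjectivity of $\cv_0(I,q)\colon\ck_0(B,K)\to\cv_0(I,\cq(g,f))$, which acts by $t\mapsto tg$, yields the required $t$.

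The only technical subtlety I foresee is the interplay between enriched and ordinary $\lambda$-presentability: both directions manipulate objects in $\ck_\lambda$, and the desired statement concerns ordinary $\lambda$-presentable objects. This is handled uniformly by the standing assumption that $I$ is $\lambda$-presentable, which ensures $\ck_\lambda\subseteq(\ck_0)_\lambda$; keeping this identification straight is the main (minor) obstacle in writing out the argument.
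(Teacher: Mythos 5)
Your proof is correct and follows essentially the same route as the paper: the forward direction is exactly the paper's appeal to Proposition~\ref{E-pure-powers}(3) together with the observation that copowers of $\lambda$-presentables by objects of $\cp$ remain ($\lambda$-)presentable. For the converse the paper simply enlarges $\cp$ to contain $I$ and applies Proposition~\ref{E-pure-powers}(2) with $P=I$ (so that $f\cong I\pitchfork f$), whereas you unwind the factorization and check surjectivity of $\cv_0(I,q)$ directly; this is the same underlying mechanism (regular projectivity of $I$ makes $\cv_0(I,-)$ preserve the $(\ce,\cm)$-factorization), just written at a lower level.
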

\begin{proof}
	If $f\colon K\to L$ is ordinarily $\lambda$-pure then it satisfies point $(3)$ of Proposition~\ref{E-pure-powers} since copowers by finitely presentable objects of $\cv$ are still finitely presentable. Therefore $f$ is $(\lambda,\ce)$-pure.  
	
	If the unit $I$ is regular projective, then we can assume it to be an element of $\cp$ (we can always take $\cp$ to be the set of all the finitely presentable and regular projective objects). Thus, $f\cong I\pitchfork f$ is ordinarily $\lambda$-pure by Proposition~\ref{E-pure-powers}.
\end{proof}

\begin{exam}
	When $\cv=\bo{Ab}$ is the category of abelian groups, the unit $\mathbb Z$ is regular projective; thus the enriched notion of $\ce$-purity coincides with the ordinary one, which was already studied by Prest in the context of definable additive category and model theory \cite{Pre11}.
\end{exam}

Recall the notion of $\ce$-injectivity from Section~\ref{enr-purity1}. In this context it translates into the following: given a $\cv$-category $\ck$, an object $X$ is {\em $\ce$-injective} with respect to $h\colon A\to B$ in $\ck$ if 
	$$\ck(h,X)\colon\ck(B,X)\to\ck(A,X)$$ 
is a regular epimorphism. In other words, if for any $(f,P)\colon A\to X$, with $P\in\cp$, there exists $(g,P)\colon B\to X$ such that $(gh,P)=(f,P)$.

We can now prove the following theorem characterizing enriched $\ce$-injectivity classes. This will generalize \cite[Proposition~6.8]{LT20} characterizing such classes only for those $\cv$ with a regular projective unit. 

\begin{theo}\label{char}
	Let $\cv$ be a symmetric monoidal $\lambda$-quasivariety endowed with the (regular epi, mono) factorization system. For a locally $\lambda$-presentable $\cv$-category $\ck$, the $(\lambda,\ce)$-injectivity classes of $\ck$ are precisely the full subcategories of $\ck$ closed under products, powers by $\cp$, $\lambda$-filtered colimits, and $(\lambda,\ce)$-pure subobjects.
\end{theo}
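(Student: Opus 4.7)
The plan is to obtain this theorem as a direct application of Theorem~\ref{char-theo-1} with the choice $\cg := \cp$. Accordingly, the work is to verify Assumption~\ref{assumption} together with the two additional hypotheses of that theorem; there is no serious obstacle, the argument being largely a bookkeeping exercise.

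For Assumption~\ref{assumption}, properness of the (regular epi, mono) factorization system on a $\lambda$-quasivariety is standard, and it is enriched because for every $X\in\cv$ the functor $X\otimes -$ is a left adjoint and so preserves regular epimorphisms. The class $\cm$ of monomorphisms is closed under $\lambda$-filtered colimits in $\cv^\to$ since $\cv_0$ is locally $\lambda$-presentable. Finally, the presentation of $\ce$ as an injectivity class in $\cv^\to$ of the required shape is exactly Example~\ref{ex-ass}(5), witnessed by the maps $(!_P,1_P)$ for $P\in\cp$.

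Condition $(i)$ of Theorem~\ref{char-theo-1}, the coincidence of $(\lambda,\ce)$-pure and barely $(\lambda,\ce)$-pure morphisms, will follow from Lemma~\ref{pure-bpure} once we know that regular epis in $\cv_0$ are stable under pullback. This is the step I would flag as the one requiring the most care, although it too is elementary: because $\cp$ is a strong generator of regular projective objects, $e\in\ce$ if and only if $\cv_0(P,e)$ is surjective for every $P\in\cp$; each representable $\cv_0(P,-)$ preserves pullbacks, and surjections are pullback-stable in $\bo{Set}$, so pullback-stability of $\ce$ follows pointwise.

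Finally, condition $(ii)$ is verified with $\cg := \cp$. Each $P\in\cp$ being regular projective, the functor $[P,-]$ preserves regular epimorphisms and hence powers by $\cp$ are $\ce$-stable; conversely, the implication that $\cv_0(P,e)$ being surjective for every $P\in\cp$ forces $e\in\ce$ is precisely the characterization of regular epis invoked in the previous paragraph. With all hypotheses of Theorem~\ref{char-theo-1} in place the characterization follows at once. The inclusion of closure under powers by $\cp$ in the statement is exactly what generalizes \cite[Proposition~6.8]{LT20} beyond the case of a regular projective unit, where for $I\in\cp$ the power $I\pitchfork K\cong K$ is trivial and the condition reduces to ordinary $\ce$-injectivity in $\ck_0$.
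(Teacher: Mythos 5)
Your proposal is correct and follows essentially the same route as the paper: verify Assumption~\ref{assumption}, deduce condition (i) of Theorem~\ref{char-theo-1} from Lemma~\ref{pure-bpure} via pullback-stability of regular epimorphisms, and apply that theorem with $\cg:=\cp$, using the regular-generator characterization of regular epis for condition (ii). One small caveat: the claim that $[P,-]$ preserves regular epimorphisms does not follow from regular projectivity of $P$ alone but also needs $Q\otimes P$ to be regular projective for each $Q\in\cp$ (condition (3) in the definition of a symmetric monoidal $\lambda$-quasivariety), since $\cv_0(Q,[P,e])\cong\cv_0(Q\otimes P,e)$; the paper delegates exactly this point to \cite[Remark~4.15]{LT20}.
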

\begin{proof}
	Note that Assumption~\ref{assumption} is satisfied, and that a morphism is $(\lambda,\ce)$-pure if and only if it is barely $(\lambda,\ce)$-pure Lemma~\ref{pure-bpure} since regular epimorphisms are stable under pullbacks in $\cv$. Moreover, a map $e$ in $\cv$ is a regular epimorphism if and only if $\cv_0(P,e)$ is surjective for any $P\in\cp$ (since $\cp$ forms a regular generator), and powers by $\cp$ are $\ce$-stable by \cite[Remark~4.15]{LT20}. 
	
	Thus we can apply Theorem~\ref{char-theo-1} for $\cg:=\cp$ and obtain the desired characterization of $(\lambda,\ce)$-injectivity classes.
\end{proof}

\begin{rem}
	Note that, in presence of $\lambda$-filtered colimits and products, the existence of powers by $\cp$ implies that of powers by any regular projective objects of $\cv$. Indeed, if $X\in\cv$ is projective, it is a retract of coproducts of elements $P_i$ of $\cp$ \cite[Proposition~4.8]{LT20}; thus the power $X\pitchfork K$ of an object $K$ in a $\cv$-category $\ck$ will be a retract of the product of the powers $P_i\pitchfork K$.
\end{rem}

\begin{rem}
	Every $\ce$-injectivity class in $\ck$ is an ordinary injectivity class in the underlying category $\ck_0$. However, the converse needs not hold in general since ordinary injectivity classes may not have (enriched) absolute colimits, while they exist in every $\ce$-injectivity class (since regular epimorphisms are closed under absolute colimits in $\cv^\to$).\\
	Consider $\cv=\bo{DGAb}$, and let $C_k$ be the chain complex having $\mathbb{Z}/k\mathbb{Z}$ in degree $0$ and $-1$, with differential $d_0=\tx{id}$, and which is trivial in every other degree. To give a map $C_k\to A$ is the same as giving an element $x\in A_0$ such that $kx=0$. Let $h\colon P_0\to C_k$ be the component-wise projection (where $P_0$ was defined in Example~\ref{DG}); then the ordinary injectivity class defined by $h$ in $\bo{DGAb}$ is the full subcategory of all chain complexes $A$ such that $kx=0$ for every $x\in A_0$. This is not an $\ce$-injectivity class in $\bo{DGAb}$ since it is not closed under suspensions (shifts of the degrees), which can be expressed as absolute colimits (see \cite{NST2020cauchy}).
\end{rem}

In the proof of result below, at some point we will need to see a morphism $x\colon X\to Y$ as a $P$-morphism $(x',P)\colon X\to Y$ for some $P\in\cp$. When the unit is regular projective this is trivial (since we can assume $I\in\cp$), but in general we need to argue differently.

\begin{nota}\label{I*}
	Since $\cv$ is a $\lambda$-quasivariety we can write the unit $I$ as a regular quotient of a coproduct of $\lambda$-presentable regular projective objects. Moreover, since $I$ is $\lambda$-presentable, we can assume such coproduct to be $\lambda$-small. But $\lambda$-small coproducts of $\lambda$-presentable regular projective objects is still $\lambda$-presentable regular projective. It follows that there exists a $\lambda$-presentable and regular projective object $I^*$ together with a regular epimorphism $$e\colon I^*\twoheadrightarrow I.$$ We shall now fix such $I^*$ and $e$ and, without loss of generality, assume that $I^*\in\cp$ (one can always enlarge $\cp$ to contain $I^*$).
\end{nota}

\begin{rem}\label{QI}
	Given $I^*$ as above, by pre-composing with $e$, every morphism in a $\cv$-category $\ck$ can be seen as a $I^*$-morphism. 
	
	As it happens in the case of ordinary morphisms (Definition~\ref{PI-comp}), $I^*$-morphisms can be composed with $P$-morphisms for any $P\in\cp$. Indeed, given $P\in\cp$, tensoring with $e$ gives a regular epimorphism $$e_P\colon P\otimes I^*\to P$$ which, since $P$ is regular projective, splits giving $s_e\colon P\to P\otimes I^*$ such that $e_Ps_P=1_P$. Now, given a $I^*$-morphism $(g,I^*)\colon X\to Y$ and a $P$-morphism $(f,P)\colon Y\to X$, we define their composite as the $P$-morphism
	$$ (fg,P)\colon P\stackrel{s_e}{\longrightarrow}P\otimes I^*\stackrel{f\otimes g}{\xrightarrow{\hspace*{0.8cm}}}\ck(Y,Z)\otimes \ck(X,Y)\stackrel{\circ}{\longrightarrow}\ck(X,Z). $$
	Finally, note that the composite of a a morphism $g\colon X\to Y$ with a $P$-morphism $(f,P)\colon Y\to Z$ with is the same as the composite of the $I^*$-morphism $(ge,I^*)$, induced by $g$ through $e$, with $(f,P)$; this is because $s_P$ is a section of $e_P$.
\end{rem}

\begin{exam}
	When $\cv=\bo{DGAb}$ we can take $I^*=P_0$ (as in Example~\ref{DG}) together with its projection down to $I$. This allows us to see every morphism in a DG-category as a protomorphism of degree $0$.
\end{exam}

Recall the notion of $\ce$-split monomorphism from Definition~\ref{e-split}. Below we denote by $(1_X,I^*)\colon X\to X$ the $I^*$-morphism obtained from the identity of $X$ in the sense of Remark~\ref{QI}.

\begin{propo}\label{E-spli-char}
	A morphism $s\colon X\to Y$ in a $\cv$-category $\ck$ is an $\ce$-split monomorphism if and only if there exists $(t,I^*)\colon Y\to X$ for which $$(ts,I^*)=(1_X,I^*)$$
	in the sense of Definition~\ref{PI-comp}.
\end{propo}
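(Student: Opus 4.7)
The plan is to exploit the fact that in a $\lambda$-quasivariety $\cv$ the class $\cp$ is a regular generator of regular projectives, so that a morphism $e$ in $\cv$ is a regular epimorphism if and only if $\cv_0(P,e)$ is surjective for every $P\in\cp$. In particular, since $I^*\in\cp$ by Notation~\ref{I*}, surjectivity of $\cv_0(I^*,\ck(s,X))$ is one of the necessary conditions. The whole proof will consist of showing that this single condition at $P=I^*$ is in fact sufficient, by using the splitting $s_P\colon P\to P\otimes I^*$ of $e_P$ from Remark~\ref{QI} to transfer a witness at $I^*$ to a witness at any $P\in\cp$.

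For the forward implication, I would assume $\ck(s,X)$ is a regular epimorphism. Then in particular $\cv_0(I^*,\ck(s,X))$ is surjective, so the element $(1_X,I^*)\in\cv_0(I^*,\ck(X,X))$ lifts to some $(t,I^*)\in\cv_0(I^*,\ck(Y,X))$. Unwinding the definition of $\ck(s,X)$ in terms of enriched composition, the resulting identity is precisely $(ts,I^*)=(1_X,I^*)$ in the sense of Definition~\ref{PI-comp}.

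For the backward implication, assume such a $(t,I^*)\colon Y\to X$ exists. I must show that $\cv_0(P,\ck(s,X))$ is surjective for every $P\in\cp$. Given any $(f,P)\colon X\to X$, define $(g,P)\colon Y\to X$ as the composite of the $I^*$-morphism $(t,I^*)\colon Y\to X$ with the $P$-morphism $(f,P)\colon X\to X$ according to the construction of Remark~\ref{QI}; concretely,
\[
g\colon P\xrightarrow{\,s_P\,} P\otimes I^*\xrightarrow{\,f\otimes t\,}\ck(X,X)\otimes\ck(Y,X)\xrightarrow{\,\circ\,}\ck(Y,X).
\]
Then I would compute $(gs,P)$ using associativity of the $\cv$-enriched composition in $\ck$, together with the coherence between $s_P$ and the unit laws (via $e_Ps_P=1_P$ and the identity $1_X^{*}=u_X\cdot e$). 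Associativity turns the triple composite $X\xrightarrow{s} Y\xrightarrow{(t,I^*)}X\xrightarrow{(f,P)}X$ into $X\xrightarrow{(ts,I^*)}X\xrightarrow{(f,P)}X$, which by hypothesis equals $X\xrightarrow{(1_X,I^*)}X\xrightarrow{(f,P)}X$, and this last composite is $(f,P)$ by the unit axiom. Hence $(gs,P)=(f,P)$, giving the desired lift.

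The main obstacle is the backward direction: one must verify carefully the associativity/unit manipulations for mixed compositions of $I^*$- and $P$-morphisms, since these are defined via the splitting $s_P$ and are not a priori strictly associative in a free sense; however, they are governed by the genuine enriched composition in $\ck$ once one reads each $(-,P)$-morphism as a map out of $P$ into the appropriate hom-object. So the technical work reduces to checking that the two associations of the triple composite agree, which follows from the associativity and unit axioms of the $\cv$-category $\ck$ together with the compatibility $e_Ps_P=1_P$.
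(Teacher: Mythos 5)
Your proposal is correct and follows essentially the same route as the paper: the forward direction lifts $(1_X,I^*)$ through the surjection $\cv_0(I^*,\ck(s,X))$ using regular projectivity of $I^*$, and the backward direction produces, for each $(f,P)$, the witness obtained by composing $(t,I^*)$ with $(f,P)$ via Remark~\ref{QI} and verifies $(gs,P)=(f,P)$ by the same associativity/unit manipulations (mediated by $e_Ps_P=1_P$) that the paper's chain of equalities carries out.
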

\begin{proof}
	Assume first that $s\colon X\to Y$ is an $\ce$-split monomorphism, so that $\ck(s,X)$ is a regular epimorphism. Then, since $I^*$ is regular projective, the function $$\cv_0(I^*,\ck(s,X))\colon \cv_0(I^*,\ck(Y,X))\to\cv_0(I^*,\ck(X,X))$$
	is surjective. By taking an element that is mapped to $(1_X,I^*)$ we find $(t,I^*)\colon Y\to X$ as in the statement.
	
	Conversely, suppose that there exist $(t,I^*)\colon Y\to X$ for which $(ts,I^*)=(1_X,I^*)$; in the setting of Remark~\ref{QI} the equality can be rewritten as
	$$(t(se),I^*)=(1_X,I^*)$$
	where $(se,I^*)$ is the $I^*$-morphism induced by precomposing $s\colon I\to \ck(X,Y)$ with $e\colon I^*\to I$. To prove that $s$ is an $\ce$-split monomorphism it is enough to show that 
	$$\cv_0(P,\ck(s,X))\colon \cv_0(P,\ck(Y,X))\to\cv_0(P,\ck(X,X))$$
	is surjective for any $P\in\cp$. Given $(f,P)\colon X\to X$ in the codomain, we can consider the $P$-morphism $(ft,P)\colon Y\to X$ defined in Remark~\ref{QI} above. By definition, $\cv_0(P,\ck(s,X))$ sends $(ft,P)$ to the the composite
	\begin{align*}
		(ft,P)\circ (s,I)&= (ft,P)\circ (se,I^*)\\
		&=(f,P)\circ(t(se),I^*)\\ 
		&= (f,P)\circ(1_X,I^*)\\
		&=(f,P),
	\end{align*}
	where the first composition is in the sense of Definition~\ref{PI-comp} and the others are in the sense of Remark~\ref{QI}. It follows that $\cv_0(P,\ck(s,X))$is surjective, and thus $\ck(s,X)$ is a regular epimorphism and $s$ an $\ce$-split monomorphism.
\end{proof}

\begin{exam}
	When $\cv=\bo{DGAb}$ an $\ce$-split monomorphism is just a protosplit monomorphism in the sense of \cite{NST2020cauchy}.
\end{exam}

\begin{propo}\label{filt colimit of E-split}
	If $\ck$ is a locally $\lambda$-presentable $\cv$-category, then:\begin{enumerate}
		\item $\ce$-split morphisms are $(\lambda,\ce)$-pure;
		\item $\lambda$-filtered colimits of $(\lambda,\ce)$-pure morphisms in $\ck^\to$ are $(\lambda,\ce)$-pure;
		\item every $(\lambda,\ce)$-pure morphism in $\ck$ is a $\lambda$-filtered colimit of $\ce$-split monomorphisms in $\ck^\to$.
	\end{enumerate} 
\end{propo}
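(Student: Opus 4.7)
Parts (1) and (2) are essentially immediate consequences of earlier results. For (1), let $s$ be $\ce$-split; Corollary~\ref{E-split-pure} (whose Assumption~\ref{assumption} is in force in the quasivariety setting, as recorded in the proof of Theorem~\ref{char}) shows that $s$ is barely $(\lambda,\ce)$-pure. Since $\cv_0$ is a regular category, regular epimorphisms are stable under pullbacks in $\cv$, so Lemma~\ref{pure-bpure} upgrades this to $(\lambda,\ce)$-purity. Part (2) is a direct application of Proposition~\ref{filt-col-pure1}: here $\cm$ is the class of monomorphisms of $\cv$, which is closed under $\lambda$-filtered colimits in $\cv^\to$ because $\cv_0$ is locally $\lambda$-presentable.

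For (3) the plan is to mimic the classical argument of \cite[2.30]{AR} in our enriched setting. Given a $(\lambda,\ce)$-pure $f\colon K\to L$, express $f$ as the canonical $\lambda$-filtered colimit in $\ck^\to$ of the slice $\ck_\lambda^\to/f$, whose objects are squares $(g\colon A\to B,\,(u,v)\colon g\to f)$ with $A,B\in\ck_\lambda$; recall that this really is a colimit since $(\ck^\to)_\lambda=\ck_\lambda^\to$. We intend to show that the full subdiagram $\cj$ consisting of those objects whose top arrow $g$ is an $\ce$-split monomorphism is cofinal; this is enough, because then $f$ is a $\lambda$-filtered colimit in $\ck^\to$ of $\ce$-split monomorphisms. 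Cofinality will reduce to producing, for each $(g,u,v)$ in the slice, a morphism of squares into some object of $\cj$. The key input is Proposition~\ref{E-pure} applied to $f$ at $P=I^*$ (from Notation~\ref{I*}): it supplies an $I^*$-morphism $(t,I^*)\colon B\to K$ with $(tg,I^*)=(ue,I^*)$, which transposes along the copower adjunction to a genuine morphism $\tilde t\colon I^*\cdot B\to K$.

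Using $\tilde t$, the regular epi $e\colon I^*\twoheadrightarrow I$, and $g$ itself, I plan to construct an arrow $s$ between $\lambda$-presentable objects of $\ck$ (formed as a suitable pushout involving $g$ and the copowers $I^*\cdot A$, $I^*\cdot B$) that admits an $I^*$-morphism retraction, and hence is $\ce$-split by Proposition~\ref{E-spli-char}; by construction $s$ will also fit into a morphism $(g,u,v)\to(s,u',v')$ in $\ck_\lambda^\to/f$, yielding the required cofinality. The main obstacle, in my view, is precisely this final step: turning the $P$-morphism lift supplied by purity into a bona fide $\ce$-split representative in $\ck_\lambda^\to$ that both receives the original $g$ and carries compatible arrows into $f$. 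Making this precise requires careful bookkeeping of the composition rules of Remark~\ref{QI} relating $I^*$-morphisms to ordinary morphisms, together with a judicious choice of the auxiliary pushout used to manufacture $s$.
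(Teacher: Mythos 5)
Parts (1) and (2) of your argument are correct and essentially the paper's: the paper quotes Corollary~\ref{E-split-pure} together with the (implicit, but recorded in the proof of Theorem~\ref{char}) fact that regular epimorphisms are pullback-stable, so barely $(\lambda,\ce)$-pure and $(\lambda,\ce)$-pure coincide; and it quotes Proposition~\ref{filt-col-pure} where you quote Proposition~\ref{filt-col-pure1}, which is immaterial here.

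For (3) there is a genuine gap. Your plan is to show that the full subdiagram of $\ck_\lambda^\to/f$ spanned by the squares whose top edge is an $\ce$-split monomorphism is cofinal. That would prove something strictly stronger than the statement: it would exhibit $f$ as a $\lambda$-filtered colimit of $\ce$-split monomorphisms \emph{between $\lambda$-presentable objects}. Neither the statement nor the paper's proof requires this, and the lift supplied by purity does not obviously deliver it: given $(g\colon A\to B,(u,v))$ in the slice, Proposition~\ref{E-pure} yields $(t,I^*)\colon B\to K$ with $(tg,I^*)=(u,I^*)$ but with \emph{no} compatibility between $t$ and $v$ (one does not get $(ft,I^*)=(v,I^*)$), so one cannot simply pass to a retract of $B$ and remain over $f$. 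You yourself flag the construction of a $\lambda$-presentable $\ce$-split representative as ``the main obstacle'', but that obstacle is precisely the content of the proof and is left unresolved; no choice of the auxiliary pushout is specified, and it is not clear one exists in this generality (the analogous strengthening is a nontrivial theorem even for modules).

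The paper avoids the issue by not insisting that the split monomorphisms be $\lambda$-presentable. Writing $f\colon K\to L$ as a $\lambda$-filtered colimit of $f_i\colon K_i\to L_i$ in $\ck_\lambda^\to$ with cocone $(u_i,v_i)$, one applies purity to get $(t_i,I^*)\colon L_i\to K$ with $(t_if_i,I^*)=(u_i,I^*)$, forms the ordinary pushout $\bar L_i$ of $f_i$ along $u_i$, and replaces $f_i$ by $\bar f_i\colon K\to\bar L_i$. These maps have domain $K$, hence do not lie in $\ck_\lambda^\to$, but by \cite[Proposition~2.30]{AR} their colimit is still $f$; and transposing $(t_i,I^*)$ and $(1_K,I^*)$ to morphisms $\bar t_i\colon L_i\to I^*\pitchfork K$ and $\bar 1_K\colon K\to I^*\pitchfork K$ satisfying $\bar 1_K u_i=\bar t_i f_i$, the universal property of the pushout produces a retraction $\bar g_i\colon\bar L_i\to I^*\pitchfork K$ of $\bar f_i$, so each $\bar f_i$ is $\ce$-split by Proposition~\ref{E-spli-char}. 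You should either switch to this construction or supply a full proof of the cofinality claim, which is a separate and harder statement.
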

\begin{proof}
	(1) follows from Corollary~\ref{E-split-pure} and (2) is a consequence of Proposition~\ref{filt-col-pure} since monomorphisms are closed under $\lambda$-filtered colimits in $\cv^\to$.
	
	(3) We apply the same proof of \cite[Proposition~2.30]{AR} with some changes where the notion of $\ce$-split morphism is needed. 
	
	Given a $(\lambda,\ce)$-pure morphism $f\colon A\to B$, we can write it as a $\lambda$-filtered colimit in $\ck^\to$ of maps $f_i\colon A_i\to B_i$ between $\lambda$-presentable objects with connecting morphisms $(u_i,v_i)\colon f_i\to f$. As usual, all the morphisms $u_i$ and $v_i$ can be seen as $I^*$-morphisms (by pre-composing with $e\colon I^*\to I$), giving commutative squares as below.
	\begin{center}
		\begin{tikzpicture}[baseline=(current  bounding  box.south), scale=2]
			
			\node (a0) at (0,-0.8) {$A$};
			\node (b0) at (1,-0.8) {$B$};
			\node (c0) at (0,0) {$A_i$};
			\node (d0) at (1,0) {$B_i$};
			
			\path[font=\scriptsize]
			
			(a0) edge [->] node [below] {$f$} (b0)
			(a0) edge [<-] node [left] {$(u_i,I^*)$} (c0)
			(b0) edge [<-] node [right] {$(v_i,I^*)$} (d0)
			(c0) edge [->] node [above] {$f_i$} (d0);
		\end{tikzpicture}	
	\end{center}
	Since $f$ is $(\lambda,\ce)$-pure, we obtain $I^*$-morphisms $(t_i,I^*)\colon B_i\to A$ for which $(t_if,I^*)=(u_i,I^*)$. Consider now the pushouts below.
	\begin{center}
		\begin{tikzpicture}[baseline=(current  bounding  box.south), scale=2]
			
			\node (a0) at (0,-0.8) {$A$};
			\node (b0) at (1,-0.8) {$\bar{B}_i$};
			\node (c0) at (0,0) {$A_i$};
			\node (d0) at (1,0) {$B_i$};
			
			\path[font=\scriptsize]
			
			(a0) edge [->] node [below] {$\bar f_i$} (b0)
			(a0) edge [<-] node [left] {$u_i$} (c0)
			(b0) edge [<-] node [right] {$\bar u_i$} (d0)
			(c0) edge [->] node [above] {$f_i$} (d0);
		\end{tikzpicture}	
	\end{center}
	It is shown in \cite[Proposition~2.30]{AR} that the colimit of the $\bar f_i$ is still $f$; thus we only need to show that each $\bar f_i$ is $\ce$-split.
	
	We can see the identity on $A$ as an $I^*$-morphism and hence, acting by transposition, as a map $\bar 1_A\colon A\to I^*\pitchfork A$; similarly each $(t_i,I^*)$ corresponds to a morphism $\bar t_i\colon B_i\to I^*\pitchfork A$. The fact that $(t_if,I^*)=(u_i,I^*)$ means that the square below commutes.
	\begin{center}
		\begin{tikzpicture}[baseline=(current  bounding  box.south), scale=2]
			
			\node (a0) at (0,-0.8) {$A$};
			\node (b0) at (1,-0.8) {$I^*\pitchfork A$};
			\node (c0) at (0,0) {$A_i$};
			\node (d0) at (1,0) {$B_i$};
			
			\path[font=\scriptsize]
			
			(a0) edge [->] node [below] {$\bar 1_A$} (b0)
			(a0) edge [<-] node [left] {$u_i$} (c0)
			(b0) edge [<-] node [right] {$\bar t_i$} (d0)
			(c0) edge [->] node [above] {$f_i$} (d0);
		\end{tikzpicture}	
	\end{center}
	By the universal property of the pushout we obtain a morphism $\bar g_i\colon\bar B_i\to I^*\pitchfork A$ such that (in particular) $\tilde 1_A= \bar g_i \bar f_i$. Acting by transposition, $\bar g_i$ corresponds to an $I^*$-morphism $(g_i,I^*)\colon \bar B_i\to A$ such that $(1_A,I^*)=(g_i\bar f_i,I^*)$. Thus each $\bar f_i$ is an $\ce$-split monomorphism by Proposition~\ref{E-spli-char}.
\end{proof}

\appendix

\section{The canonical language}\label{can-lang}

Fix a factorization system $(\ce,\cm)$ on $\cv$. In this section we consider a notion of language on the underlying category of a given $\cv$-category as in \cite[Chapter~18]{Pre11}.

Given such a language $\mathbb L$, an $\mathbb L$-structure $M$ is the data of an object $M_S$ in $\cv$ for any sort $S$ in $\mathbb{L}$, and a morphism $M_f\colon M_S\to M_T$ for any function symbol $f\colon S\to T$ in $\mathbb L$.

\begin{defi}
	Let $\ck$ be a locally $\lambda$-presentable $\cv$-category. The canonical language $\mathbb L(\ck_\lambda)$ of $\ck$ is the language with sorts $s_A$ the objects $A$ of $\ck_\lambda$ and function symbols $s_f\colon s_A\to s_B$ corresponding to morphisms $f\colon B\to A$ in $\ck_\lambda$. 
\end{defi}

\begin{rem}
	Each object $K$ of $\ck$ defines an $\mathbb L(\ck_\lambda)$-structure as follows: every sort $s_A\in \mathbb L(\ck_\lambda)$ is interpreted as the object $\ck(A,K)$, and every function symbol $s_f\in\mathbb L(\ck_\lambda)$ as above is interpreted as the morphism $\ck(f,K)\colon \ck(A,K)\to \ck(B,K)$.
\end{rem}

Given a language $\mathbb L$, the atomic formulas are those of the form 
$$ \phi(x,y)\equiv (f(x)=g(y))$$
with $f\colon S\to U$, $g\colon T\to U$, and $x$ and $y$ of sort $S$ and $T$ respectively.
A primitive positive formula (pp-formula) is one of the form 
$$ \psi(x)\equiv \exists y\ \phi(x,y) $$
where each $\phi$ is a conjunction of atomic formulas.

For any $\mathbb L$-structure $M$, we interpret atomic formulas $ \phi(x,y)\equiv (f(x)=g(y))$ as the $\cm$-subobject of $M_S\times M_T$
given by the $(\ce,\cm)$-factorization of the map
\begin{center}
	\begin{tikzpicture}[baseline=(current  bounding  box.south), scale=2]
		
		\node (a0) at (-0.1,0.8) {$M_{f,g}$};
		\node (c0) at (2,0.8) {$ M_S\times M_T $};
		\node (d0) at (1,0.2) {$\parallel \phi\parallel_M$};
		
		\path[font=\scriptsize]
		
		(a0) edge [->] node [above] {$h$} (c0)
		(a0) edge [->>] node [below] {$\ce\ \ \ $} (d0)
		(d0) edge [>->] node [below] {$\ \ \ \cm$} (c0);
	\end{tikzpicture}	
\end{center}
where $M_{f,g}$ is the pullback of $M_f$ along $M_g$ and $h$ is the morphism induced into the product.

\begin{rem}\label{proper-int}
	If the factorization system is proper then $\cm$ contains all the regular monomorphisms (see the dual of \cite[2.1.4]{FK})). Thus $\parallel \phi\parallel_M\cong M_{f,g}$.
\end{rem}

The interpretation of a conjunction $\phi_1\wedge\cdots\wedge\phi_n$ of atomic formulas is given by the $\cm$-intersection of each $\parallel\phi_i\parallel_M$. Finally, the interpretation of $ \psi(x)\equiv\exists y\ \phi(x,y) $ is given by the $(\ce,\cm)$-factorization of the composite below.
\begin{center}
	\begin{tikzpicture}[baseline=(current  bounding  box.south), scale=2]
		
		\node (a0) at (-0.1,0.8) {$\parallel \phi\parallel_M$};
		\node (b0) at (1.3,0.8) {$ M_S\times M_T $};
		\node (c0) at (2.6,0.8) {$M_S$};
		\node (d0) at (1.3,0.2) {$\parallel \psi\parallel_M$};
		
		\path[font=\scriptsize]
		
		(a0) edge [>->] node [above] {} (b0)
		(b0) edge [->] node [above] {$\pi_1$} (c0)
		(a0) edge [->>] node [below] {$\ce\ \ \ $} (d0)
		(d0) edge [>->] node [below] {$\ \ \ \cm$} (c0);
	\end{tikzpicture}	
\end{center}

\begin{rem}\label{lfp-inter}
	Consider an object $K$ in a locally $\lambda$-presentable $\ck$ with its standard $\mathbb L(\ck_\lambda)$-structure. The interpretation of the atomic formula $ \phi(x,y)\equiv (s_f(x)=s_g(y))$, for $f\colon C\to A$ and $g\colon C\to B$ in $\ck_\lambda$, is given by the $(\ce,\cm)$-factorization of the map
	\begin{center}
		\begin{tikzpicture}[baseline=(current  bounding  box.south), scale=2]
			
			\node (a0) at (-0.1,0.8) {$\widehat\ck(f,g,K)$};
			\node (c0) at (2.2,0.8) {$ \ck(A,K)\times\ck(B,K) $};
			\node (d0) at (1.1,0.2) {$\parallel \phi\parallel_K$};
			
			\path[font=\scriptsize]
			
			(a0) edge [->] node [above] {$h$} (c0)
			(a0) edge [->>] node [below] {$\ce\ \ \ $} (d0)
			(d0) edge [>->] node [below] {$\ \ \ \cm$} (c0);
		\end{tikzpicture}	
	\end{center}
	where $\widehat\ck(f,g,K)$ is the pullback of $\ck(f,K)$ along $\ck(g,K)$ and $h$ is the morphism induced into the product.
	
	Since $\ce$ is closed under composition, the interpretation of a formula of the form $\psi(x)\equiv \exists y\ (s_f(x)=s_g(y))$ is given by the $(\ce,\cm)$-factorization of the map
	\begin{center}
		\begin{tikzpicture}[baseline=(current  bounding  box.south), scale=2]
			
			\node (a0) at (-0.1,0.8) {$\widehat\ck(f,g,K)$};
			\node (c0) at (2.2,0.8) {$ \ck(A,K)$};
			\node (d0) at (1.1,0.2) {$\parallel \psi\parallel_K$};
			
			\path[font=\scriptsize]
			
			(a0) edge [->] node [above] {$\widehat g$} (c0)
			(a0) edge [->>] node [below] {$\ce\ \ \ $} (d0)
			(d0) edge [>->] node [below] {$\ \ \ \cm$} (c0);
		\end{tikzpicture}	
	\end{center}
	where $\widehat g$ is the map opposite to $\ck(g,K)$ in the pullback defining $\widehat\ck(f,g,K)$. When $f=1_A$, then $\parallel \psi\parallel_K$ is the $(\ce,\cm)$-factorization of the map $\ck(g,K)$.
\end{rem}

\begin{defi}
	Let $f\colon K\to L$ be a morphism in a locally $\lambda$-presentable $\cv$-category $\ck$. We say that $f$ is {\em elementary with respect to a pp-formula $\psi$} in the language $\mathbb L(\ck_\lambda)$ if the induced diagram
	\begin{center}
		\begin{tikzpicture}[baseline=(current  bounding  box.south), scale=2]
			
			\node (a0) at (0,0.8) {$\parallel \psi\parallel_K$};
			\node (b0) at (1.3,0.8) {$\ck(A,K)$};
			\node (c0) at (0,0) {$\parallel \psi\parallel_L$};
			\node (d0) at (1.3,0) {$\ck(A,L)$};
			
			\path[font=\scriptsize]
			
			(a0) edge [>->] node [above] {} (b0)
			(a0) edge [->] node [left] {} (c0)
			(b0) edge [->] node [right] {$\ck(A,f)$} (d0)
			(c0) edge [>->] node [below] {} (d0);
		\end{tikzpicture}	
	\end{center}
	is a pullback.
\end{defi}

\begin{propo}\label{elem}
	Let $\ck$ be a locally $\lambda$-presentable $\cv$-category and $f\colon K\to L$ be a morphism in it. Then: \begin{enumerate}
		\item The map $f$ is $(\lambda,\ce)$-pure if and only if it is elementary with respect to any pp-formula of the form
		$$\psi(x)\equiv\exists y\ (s_{h}(x)=s_{g}(y))$$
		in $\mathbb L(\ck_\lambda)$.
		\item If $(\ce,\cm)$ is proper, then $f$ is $(\lambda,\ce)$-pure if and only if it is elementary with respect to any pp-formula in $\mathbb L(\ck_\lambda)$.
	\end{enumerate}
\end{propo}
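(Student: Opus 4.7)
The proof proceeds by reducing elementarity to the pullback condition of Remark~\ref{explanation-purity} via a pushout/colimit trick inside $\ck_\lambda$.

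For part~(1), consider a pp-formula $\psi(x)\equiv\exists y\,(s_h(x)=s_g(y))$ with $h\colon A'\to A$ and $g\colon A'\to B$ in $\ck_\lambda$, and form the pushout $P$ of $h$ and $g$ in $\ck$, with pushout maps $\bar g\colon A\to P$ and $\bar h\colon B\to P$; since $\ck_\lambda$ is closed under finite colimits, $P\in\ck_\lambda$. By the universal property of the pushout, $\ck(P,K)$ is the pullback of $\ck(h,K)$ along $\ck(g,K)$, so $\ck(P,K)\cong\widehat\ck(h,g,K)$ naturally in $K$, and under this isomorphism the projection $\widehat g$ corresponds to $\ck(\bar g,K)$. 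By Remark~\ref{lfp-inter}, $\|\psi\|_K$ is then the $(\ce,\cm)$-factorization of $\ck(\bar g,K)$, and analogously for $L$. Elementarity of $f$ with respect to $\psi$ therefore amounts to saying that the $(\ce,\cm)$-factorization of $\ck(\bar g,K)$ is obtained by pulling back that of $\ck(\bar g,L)$ along $\ck(A,f)$; this is $\ce$-purity of $f$ with respect to $\bar g$. The implication $(\Rightarrow)$ follows from $(\lambda,\ce)$-purity applied to $\bar g\colon A\to P$, and $(\Leftarrow)$ follows by specialising to $h=1_A$, in which case $P=B$ and $\bar g=g$, so that elementarity with respect to $\psi$ is exactly $\ce$-purity of $f$ with respect to an arbitrary $g\colon A\to B$ in $\ck_\lambda$.

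For part~(2), the $(\Leftarrow)$ direction is contained in~(1). For the converse, we reduce an arbitrary pp-formula $\psi(x)\equiv\exists y\,\phi(x,y)$ to a formula of the shape handled in~(1). Let $A=\coprod_i A_i$ and $B=\coprod_j B_j$ be the coproducts in $\ck_\lambda$ of the sorts of $x$ and $y$ respectively, so that $\ck(A,K)\times\ck(B,K)\cong\ck(A\sqcup B,K)$. Each atomic conjunct contributes a parallel pair $C_k\rightrightarrows A\sqcup B$ in $\ck_\lambda$; let $D\in\ck_\lambda$ be the coequaliser in $\ck$ of all these parallel pairs simultaneously. By Remark~\ref{proper-int}, properness guarantees that each atomic formula is interpreted by its honest pullback rather than a strictly smaller image, so the intersection defining $\|\phi\|_K$ coincides with the limit in $\cv$ computing $\ck(D,K)$; we obtain a natural isomorphism $\|\phi\|_K\cong\ck(D,K)$ under which the projection $\|\phi\|_K\to\ck(A,K)$ corresponds to $\ck(\iota,K)$ for the canonical $\iota\colon A\to D$. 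Hence $\|\psi\|_K$ is the $(\ce,\cm)$-factorization of $\ck(\iota,K)$, and elementarity of $f$ with respect to $\psi$ is exactly $\ce$-purity with respect to $\iota$, which follows from~(1).

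The main obstacle lies precisely in part~(2): without properness, the interpretation of an atomic formula involves a genuine image factorization which may differ from the corresponding pullback, and then intersections of these images need not match the colimit description $\ck(D,K)$ of $\|\phi\|_K$. This is the essential reason why properness is assumed only in part~(2), whereas part~(1) requires no such hypothesis since only a single atomic formula occurs there (and Remark~\ref{lfp-inter} already absorbs the image into the outer existential via the closure of $\ce$ under composition).
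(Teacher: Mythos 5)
Your proof is correct and follows essentially the same route as the paper's: identify the interpretation of the matrix of a pp-formula with a representable $\ck(D,-)$ for a $\lambda$-presentable $D$ built as a finite colimit (pushout, respectively joint coequaliser), so that elementarity with respect to $\psi$ becomes exactly $\ce$-purity with respect to the induced map $A\to D$, with Remark~\ref{proper-int} supplying the needed identification of interpretations of conjunctions with honest pullbacks in part~(2). The only cosmetic difference is that the paper reduces a conjunction to a single atomic formula via the coproduct $C=\sum_i C_i$ and then reuses the pushout from part~(1), whereas you compute the same representing object in one step as a coequaliser; the two constructions yield the same $D$.
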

\begin{proof}
	(1)  First, consider $g\colon A\to B$ in $\ck_\lambda$ and the pp-formula $$\psi(x)\equiv\exists y\ (s_{1_A}(x)=s_{g}(y)).$$ 
	We obtain a commutative diagram
	\begin{center}
		\begin{tikzpicture}[baseline=(current  bounding  box.south), scale=2]

			\node (a) at (-1.3,0.8) {$\ck(B,K)$};
			\node (b) at (-1.3,0) {$\ck(B,L)$};
			
			\node (a0) at (0,0.8) {$\parallel \psi\parallel_K$};
			\node (b0) at (1.3,0.8) {$\ck(A,K)$};
			\node (c0) at (0,0) {$\parallel \psi\parallel_L$};
			\node (d0) at (1.3,0) {$\ck(A,L)$};
			
			\path[font=\scriptsize]
			
			(a) edge [->>] node [above] {} (a0)
			(a) edge [->] node [left] {$\ck(B,f)$} (b)
			(b) edge [->>] node [right] {} (c0)
			
			(a0) edge [>->] node [above] {} (b0)
			(a0) edge [->] node [left] {} (c0)
			(b0) edge [->] node [right] {$\ck(A,f)$} (d0)
			(c0) edge [>->] node [below] {} (d0);
		\end{tikzpicture}	
	\end{center}
	where $\parallel \psi\parallel_K$ and $\parallel \psi\parallel_L$ are by definition the $(\ce,\cm)$-factorizations of $\ck(g,K)$ and $\ck(g,L)$ respectively, and the middle vertical arrow is induced by the factorization. Then by definition we have $\parallel \psi\parallel_L\cong \cp(g,L)$ as $\cm$-subobjects of $\ck(A,L)$.
	
	Now, $f$ is $(\lambda,\ce)$-pure with respect to $g$ if and only if the $(\ce,\cm)$-factorization of $\ck(g,K)$ is given by $\cp(g,f)$, if and only if $\parallel \psi\parallel_K\cong\cp(g,f)$ as $\cm$-subobjects of $\ck(A,K)$, if and only if $\parallel \psi\parallel_K$ is the pullback of $\parallel \psi\parallel_L$ along $\ck(A,f)$, if and only if $f$ is elementary with respect to $\psi$.

	To conclude it is enough to show that every formula of the form
	$$\phi(x,y)\equiv (s_{h}(x)=s_{g}(y))$$
	is equivalent in $\ck$ to one
	$$\phi'(x,y)\equiv (s_{1_A}(x)=s_{h'}(y))$$
	involving an identity morphism, meaning that $\parallel\phi\parallel_X\cong \parallel\phi'\parallel_X$. Given $\phi$ as above, with $g\colon C\to A$ and $h\colon C\to B$, let $D$ be the pushout of $g$ along $h$ in $\ck$, together with the induced morphism $h'\colon A\to D$. Then $D$ is still $\lambda$-presentable and $\phi$ is equivalent in $\ck$ to the formula $\phi'$ with the $h'$ just defined. This is thanks to Remark~\ref{lfp-inter} and the fact that $\widehat\ck(g,h,X)\cong \ck(D,X)$ for any $X\in\ck$. 
	
	(2) Let $(\ce,\cm)$ be proper, and consider now a general pp-formula 
$$
\psi(x)\equiv \exists y\ (\phi_1(x,y)\wedge\cdots\wedge\phi_n(x,y))
$$
	where $\phi_i(x,y)\equiv (s_{g_i}(x)=s_{h_i}(y))$, with $g_i\colon C_i\to A$ and $h_i\colon C_i\to B$ in $\ck_\lambda$.\\ 
	Consider the $\lambda$-presentable object $C:=\sum_{i\leq n}C_i$ and the morphisms $g\colon C\to A$ and $h\colon C\to B$ induced by the $g_i$'s and the $h_i$'s respectively. We wish to prove that $\phi_1\wedge\cdots\wedge\phi_n$ is equivalent in $\ck$ to
	$$ \phi(x,y)\equiv (s_{g}(x)=s_{h}(y)); $$
	meaning that $\parallel\phi_1\wedge\cdots\wedge\phi_n\parallel_X\cong \parallel\phi\parallel_X$ as $\cm$-subobjects of $\ck(A,X)\times\ck(B,X)$, for any $X\in\ck$. Note that, by Remark~\ref{proper-int}, $\parallel\phi_i\parallel_X$ is the pullback of $\ck(g_i,X)$ along $\ck(h_i,X)$; thus $\parallel\phi_1\wedge\cdots\wedge\phi_n\parallel_X$, being obtained by pulling back all of the $\parallel\phi_i\parallel_X$, coincides with the pullback of $\ck(g,X)$ along $\ck(h,X)$, which is just $\parallel\phi\parallel_X$. Now the result follows from point (1).
\end{proof}

In the context of barely $(\lambda,\ce)$-pure morphisms we can prove the following:

\begin{propo}
	Let $(\ce,\cm)$ be proper and $\ck$ be a locally $\lambda$-presentable $\cv$-category. \begin{enumerate}
		\item Any morphism that is elementary with respect to any pp-formula of the form
		$$\psi(x)\equiv\exists y\ (s_{h}(x)=s_{g}(y))$$
		in the language $\mathbb L(\ck_\lambda)$, is barely $(\lambda,\ce)$-pure.
		\item If every barely $(\lambda,\ce)$-pure morphism is $(\lambda,\ce)$-pure, then a morphism is barely $(\lambda,\ce)$-pure if and only it is elementary with respect to any pp-formula in the language $\mathbb L(\ck_\lambda)$.
	\end{enumerate}
\end{propo}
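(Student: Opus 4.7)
The plan is to derive both parts as immediate corollaries of the characterization of $(\lambda,\ce)$-pure morphisms by pp-formulas given in Proposition~\ref{elem}, combined with Lemma~\ref{pure-bpure}, which under the standing properness hypothesis on $(\ce,\cm)$ implies that every $(\lambda,\ce)$-pure morphism is barely $(\lambda,\ce)$-pure. In this sense the present proposition does not introduce genuinely new content; it merely repackages what has already been proved by keeping track of which direction uses which hypothesis.

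For (1), I plan to simply observe that the hypothesis --- elementarity with respect to every pp-formula of the shape $\psi(x)\equiv \exists y\ (s_h(x)=s_g(y))$ --- is \emph{exactly} the condition that Proposition~\ref{elem}(1) shows to be equivalent to $(\lambda,\ce)$-purity of $f$. Hence $f$ is $(\lambda,\ce)$-pure, and Lemma~\ref{pure-bpure} (applicable since $(\ce,\cm)$ is proper) upgrades this to bare $(\lambda,\ce)$-purity. Note that only one implication is claimed in (1), which fits well with the fact that bare purity is a priori weaker than ordinary $(\lambda,\ce)$-purity.

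For (2), I plan to split the biconditional into two implications and track their logical dependencies separately. The ``$\Leftarrow$'' direction (elementarity with respect to \emph{all} pp-formulas implies bare $(\lambda,\ce)$-purity) does not require the coincidence hypothesis at all: Proposition~\ref{elem}(2) already gives $(\lambda,\ce)$-purity of $f$, and Lemma~\ref{pure-bpure} once again descends this to bare purity. The ``$\Rightarrow$'' direction is precisely where the coincidence hypothesis is put to work: assuming $f$ barely $(\lambda,\ce)$-pure, the extra assumption upgrades $f$ to $(\lambda,\ce)$-pure, and Proposition~\ref{elem}(2) then delivers elementarity with respect to every pp-formula of $\mathbb{L}(\ck_\lambda)$.

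There is no substantial obstacle: the technical work --- in particular the pushout reduction used to replace formulas $(s_h(x)=s_g(y))$ by ones of the form $(s_{1_A}(x)=s_{h'}(y))$, and the coproduct reduction used in Proposition~\ref{elem}(2) to collapse finite conjunctions of atomic formulas into a single atomic one --- has already been done inside Proposition~\ref{elem}. The present task is just to assemble the two cited results in the correct order and to be careful, in (2), about noting that only the ``$\Rightarrow$'' implication genuinely depends on the assumption that bare and ordinary $(\lambda,\ce)$-purity coincide.
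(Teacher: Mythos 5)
Your proposal is correct and follows essentially the same route as the paper, whose proof is simply ``Follows from the previous proposition and Lemma~\ref{pure-bpure}.'' The only difference is that you spell out which implication of (2) uses the coincidence hypothesis, which is a harmless (and accurate) elaboration.
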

\begin{proof}
			
			
			
			
			
			
	%
	Follows from the previous proposition and Lemma~\ref{pure-bpure}.
\end{proof}

\end{document}